\documentclass[a4paper,12pt]{scrartcl}
\usepackage{amsmath,amssymb,latexsym,amsthm,enumerate}
\usepackage{xcolor}
\usepackage{graphicx}
\usepackage{epstopdf}

\usepackage[T1]{fontenc}
\usepackage[utf8]{inputenc}
\usepackage[english]{babel}

\newcommand*{\wh}{\widehat}
\newcommand*{\wt}{\widetilde}
\newcommand*{\ol}{\overline}



\newcommand*{\eps}{\varepsilon}

\newcommand*{\N}{\mathbb{N}}
\newcommand*{\R}{\mathbb{R}}

\newcommand*{\Znn}{\bbN_0}

\newcommand*{\IR}{\mathbb{R}}

\newcommand*{\bbN}{\mathbb N}

\newcommand*{\bbR}{\mathbb R}

\newcommand*{\cF}{\mathcal{F}}

\newcommand*{\cM}{\mathcal{M}}

\newcommand*{\cO}{\mathcal{O}}

\newcommand*{\loc}{\mathrm{loc}}

\newcommand*{\Law}{\operatorname{Law}}
\newcommand*{\Var}{\operatorname{Var}}

\newcommand{\be}{\begin{eqnarray*}}
\newcommand{\ee}{\end{eqnarray*}}
\newcommand{\ben}{\begin{eqnarray}}
\newcommand{\een}{\end{eqnarray}}
\newcommand{\bi}{\begin{itemize}}
\newcommand{\ei}{\end{itemize}}

\newtheorem{theo}{Theorem}[section]

\newtheorem{lemma}[theo]{Lemma}
\newtheorem{propo}[theo]{Proposition}

\theoremstyle{definition}
\newtheorem{ex}[theo]{Example}

\newtheorem{remark}[theo]{Remark}

\newcounter{numpar}[section]

\title{Wasserstein convergence rates for random bit approximations of continuous Markov processes}

\author{Stefan Ankirchner\thanks{%
Stefan Ankirchner, Institute of Mathematics, University of Jena, Ernst-Abbe-Platz 2, 07745 Jena, Germany. \emph{Email:} s.ankirchner@uni-jena.de, \emph{Phone:} +49 (0)3641 946275.}
\and Thomas Kruse\thanks{%
Thomas Kruse, Institute of Mathematics, University of Gie{\ss}en, Arndtstr.~2, 35392 Gießen, Germany.
\emph{Email:} thomas.kruse@math.uni-giessen.de, \emph{Phone:} +49 (0)641 9932043.}
\and Mikhail Urusov\thanks{%
Mikhail Urusov, Faculty of Mathematics, University of Duisburg-Essen, Thea-Leymann-Str.~9, 45127 Essen, Germany.
\emph{Email:} mikhail.urusov@uni-due.de, \emph{Phone:} +49 (0)201 1837428.}}


\begin{document}

\maketitle

\begin{abstract}
We determine the convergence speed of a numerical scheme for approximating one-dimensional continuous strong Markov processes.
The scheme is based on the construction of certain
Markov chains whose laws can be embedded into the process with a sequence of stopping times. Under a mild condition on the process’ speed measure we prove that the approximating Markov chains converge at fixed times at the rate of $1/4$ with respect to every $p$-th Wasserstein distance. For the convergence of paths, we prove any rate strictly smaller than $1/4$.
Our results apply, in particular, to processes with irregular behavior
such as solutions of SDEs with irregular coefficients and processes with sticky points.

\smallskip
\emph{Keywords:}
one-dimensional Markov process;
speed measure;
Markov chain approximation;
numerical scheme;
rate of convergence;
Wasserstein distance.

\smallskip
\emph{2010 MSC:} 60J22; 60J25; 60J60; 60H35.
\end{abstract}

\section*{Introduction}\label{sec:intro}

In this article we analyze the convergence speed
in every $p$-th Wasserstein distance
of a numerical scheme, developed in \cite{aku2018cointossing}, that allows to approximate the law of any one-dimensional regular continuous strong Markov process (in the sense of Section~VII.3 in \cite{RY} or Section~V.7 in \cite{RogersWilliams}). 
In the following we refer to the latter processes as {\itshape general diffusions}. 

The set of general diffusions includes any one-dimensional stochastic process that can be described as a strong or weak solution of a time-homogeneous stochastic differential equation (SDE)
with possibly irregular coefficients.
There are, however, many general diffusions that cannot be characterized in terms of an SDE.
This is, in particular, true for diffusions with sticky features,
where a sticky point is located in the interior of the state space.
A related interesting phenomenon is slow (or sticky) reflection.
In this case the sticky point is located at the boundary of the state space.
Recent years have witnessed an increased interest
in diffusions with sticky features,
see
\cite{Bass2014},
\cite{CanCaglar2019},
\cite{ep2014},
\cite{FGV2016},
\cite{GV2017},
\cite{GV2018},
\cite{HajriCaglarArnaudon:17},
\cite{KSS2011},
\cite{Konarovskyi2017},
\cite{KvR2017}
and references therein.
Newly, diffusions with slow reflection
were applied in \cite{EberleZimmer2019}
to provide bounds (via sticky couplings)
for the distance between two
multidimensional diffusions with different drifts.
Diffusions with slow reflection also attracted interest in
economic theory, where such processes characterize
optimal continuation values
in dynamic principal-agent problems (see, e.g., \cite{zhu2012optimal} and \cite{piskorski2016optimal}).
We emphasize that the numerical schemes studied in this paper
are able to approximate general diffusions with irregularities
such as, e.g., the processes mentioned above.

In order to explain our results, let $Y = (Y_t)_{t \in [0,\infty)}$ be a general diffusion in natural scale with speed measure $m$. Assume for the rest of the introduction that the state space of $Y$ is equal to the whole real line.
In addition, let $(\xi_k)_{k \in \bbN}$ be an iid sequence of random variables, on a probability space with a measure $P$, satisfying $P(\xi_k = \pm 1) = \frac12$. 
Let $\ol h \in (0,\infty)$ and let $a_h\colon \R \to[0,\infty)$, $h\in (0,\ol h)$, be a family of functions.
Given some starting point $y\in \R$, for $h \in (0,\ol h)$
we denote by $X^h = (X^h_{kh})_{k \in \bbN_0}$ the Markov chain defined by
\begin{equation}\label{eq:def_X_intro}
X^h_0 = y
\quad\text{and}\quad
X^h_{(k+1)h} = X^h_{kh}+ a_h(X^h_{kh}) \xi_{k+1}, \quad \text{ for } k \in \bbN_0. 
\end{equation}
We extend $X^h$ to a continuous-time process by linearly interpolating two neighboring sequence elements; more precisely, we define 
\begin{equation}\label{eq:lin_interpo_intro}
X^h_t = X^h_{\lfloor t/h \rfloor h} + (t/h - \lfloor t/h \rfloor) (X^h_{(\lfloor t/h \rfloor +1)h} - X^h_{\lfloor t/h \rfloor h}), \qquad t\in[0,\infty). 
\end{equation}
In what follows, the term ``Markov chain''
is used also for the continuous-time process $X^h$
because it is always clear from the context
whether we speak about
the discrete-time Markov chain~\eqref{eq:def_X_intro}
or about the linearly-interpolated
continuous-time process $X^h$ of~\eqref{eq:lin_interpo_intro}.
The functional limit theorem in \cite{aku2018cointossing} says that if for all compact sets $K\subset \R$ the functions $(a_h)_{h \in (0,\ol h)}$ satisfy 
\begin{equation}\label{eq:charac_sf_intro}
\sup_{y \in K}\left|
\frac{1}{2}\int_{(y-a_h(y),y+a_h(y))} (a_h(y)-|u-y|)\,m(du)
-h
\right|\in o(h),
\end{equation}
then the associated Markov chains
$(X^h)_{h\in (0,\ol h)}$ converge in distribution to $Y$, as $h\to 0$.
To motivate the expression on the left-hand side of~\eqref{eq:charac_sf_intro},
we remark that, for $y\in\bbR$ and $a>0$, it holds
\begin{equation}\label{eq:14042020a1}
E_y[H_{y-a,y+a}(Y)]=\frac12\int_{(y-a,y+a)}(a-|u-y|)\,m(du)
\end{equation}
(see Remark~1.2 in \cite{aku2018cointossing}),
where $H_{y-a,y+a}(Y)=\inf\{t\ge0:Y_t\notin(y-a,y+a)\}$
($\inf\emptyset:=\infty$),
and $E_y$ denotes the expectation under the measure $P_y$
under which the Markov process $Y$ starts in $y$:
$P_y(Y_0=y)=1$.
Moreover, it is discussed in \cite{aku2018cointossing}
that, for any speed measure $m$,
there exist families $(a_h)_{h\in(0,\ol h)}$
satisfying~\eqref{eq:charac_sf_intro},
i.e., any general diffusion can be approximated in law
in this way.
Since random samples of $X^h$, $h\in(0,\ol h)$,
can be efficiently generated on a computer, this result opens the door for numerical approximations of
the distribution of $Y$ via Monte Carlo simulations of $X^h$ with $h$ chosen small enough.

Notice that we use symmetric Bernoulli
random variables $\xi_k$, $k\in\bbN$, in~\eqref{eq:def_X_intro}
rather than say Gaussian ones.
There are several reasons for that.
Firstly, this is especially convenient
in the case when the state space of $Y$
is a bounded subinterval in $\bbR$
because, by a suitable choice
of the functions $a_h$, $h\in(0,\ol h)$,
we can guarantee that the approximations
$X^h$
never leave the state space of $Y$
(see the precise description of the setting
in Section~\ref{sec:scheme}).
Since normal distributions 
have unbounded support,
this would be impossible in the case
of Gaussian $\xi_k$
regardless of the choice of $a_h$.
Secondly, there are technical arguments
in \cite{aku2018cointossing}
that do not go through with Gaussian $\xi_k$,
i.e., the functional limit theorem
with such a general scope
(capable of approximating \emph{any}
general diffusion)
exists by now for symmetric Bernoulli $\xi_k$
but not for Gaussian ones.
Finally, on certain machines
(like field programmable gate arrays)
it is more efficient to use \emph{random bits}
(as opposed to \emph{random numbers}),
see \cite{BSWOHRKK} and \cite{GHMR:19}
for more detail.
The algorithms based on~\eqref{eq:def_X_intro}
require only to generate random bits
and hence can be very efficiently implemented
on such machines.
We, therefore, sometimes refer to the processes
$X^h=(X^h_t)_{t\in[0,\infty)}$, $h\in(0,\ol h)$,
as \emph{random bit approximations} of~$Y$
or as \emph{random bit Markov chains}.

In the present paper we address the question of how
\emph{fast} the Markov chains $(X^h)_{h \in (0,\ol h)}$ converge to $Y$. 
In order to obtain our results we impose a stronger assumption than~\eqref{eq:charac_sf_intro}. Essentially, we assume that there exists $\lambda \in (0, \infty)$ such that the functions $(a_h)_{h \in (0,\ol h)}$ satisfy 
\begin{equation}\label{cond1}
\sup_{y \in \mathbb{R}} \left|
\frac{1}{2}\int_{(y-a_h(y),y+a_h(y))} (a_h(y)-|u-y|)\,m(du)
-h
\right| \in \mathcal{O}(h^{1+\lambda}),
\end{equation}
which is an assumption on the approximation scheme,
and we discuss that there always exist approximation schemes
satisfying~\eqref{cond1}.
In addition, we assume that the Cauchy distribution has a bounded density with respect to the speed measure of $Y$, i.e., that there exist $k_1 \in (0,\infty)$ and $k_2 \in \{0,1\}$ such that 
\begin{align}\label{cond2}
m(dx)\ge \frac{2}{k_1(1+k_2x^2)}\,dx
\end{align}
(see Remark~\ref{rem:07032019a1} for why we include a binary variable $k_2$ in~\eqref{cond2}).
It is worth noting that \eqref{cond2} does not exclude sticky features
mentioned above,
as the latter are modeled via atoms in the speed measure.
Our first convergence result, Theorem~\ref{thm:wasserstein_term} below, states, for every finite time $T \in (0, \infty)$
and every $p\in[1,\infty)$, that if \eqref{cond1} and \eqref{cond2} are satisfied, then the $p$-th Wasserstein distance between the law of $X^h_T$ and the law of $Y_T$ converges to zero at least at the rate $\min(\frac14, \frac{\lambda}{2})$, i.e., the $p$-th Wasserstein distance between $X^h_T$ and $Y_T$ is bounded by a constant times $h^{\min(\frac14, \frac{\lambda}{2})}$.
Under the same assumptions the $p$-th Wasserstein distance between the laws of the paths of $Y$ and $X^h$, up to $T$, converge at least at the rate $\min(\frac14, \frac{\lambda}{2})-\varepsilon$, where $\varepsilon$ is a positive real arbitrarily close to zero
(see Theorem~\ref{thm:wasserstein_path} below).
We remark that the convergence results apply also to general diffusions whose state spaces are intervals, but not necessarily the whole real line.
In such cases Conditions \eqref{cond1} and~\eqref{cond2} have to be slightly modified (cf.\ Condition~(A$\lambda$) and Condition~(C) below).

To provide more details for Condition~\eqref{cond1},
we note that for all $y\in \R$, $h\in (0,\ol h)$ there always exists $\wh a_h(y)\in (0,\infty)$ such that 
\begin{equation}\label{eq:emcel}
\frac{1}{2}\int_{(y-\wh a_h(y),y+\wh a_h(y))} (\wh a_h(y)-|u-y|)\,m(du)
=h.
\end{equation}
In particular, the family $(\wh a_h)_{h\in (0,\ol h)}$
satisfies~\eqref{cond1} for all $\lambda\in (0,\infty)$ and, therefore, the path distribution of \emph{any} general diffusion
satisfying~\eqref{cond2} can be approximated by
random bit
Markov chains at any Wasserstein rate strictly smaller than $1/4$. In practice, the solutions $(\wh a_h)_{h\in (0,\ol h)}$ cannot be determined in closed form but need to be approximated numerically themselves.
Therefore, results for the scheme
$(\wh a_h)_{h\in(0,\ol h)}$ of~\eqref{eq:emcel} should be complemented by a
\emph{perturbation analysis} for~\eqref{eq:emcel}.
This analysis is carried out in the present article and leads to the relaxed version~\eqref{cond1} of~\eqref{eq:emcel}.
Theorem~\ref{thm:wasserstein_path} thus ensures that if
Equation~\eqref{eq:emcel} is only solved with a precision of order $\mathcal{O}(h^{3/2})$ then the associated Markov chains $(X^h)_{h\in (0,\ol h)}$ still converge at any rate
arbitrarily close to $1/4$
(and, for the convergence of time marginals, Theorem~\ref{thm:wasserstein_term}
provides the rate $1/4$).
A further reduction of the precision entails a smaller rate.

Concerning Condition~\eqref{cond2},
we note that this assumption is not required for the functional limit theorem in \cite{aku2018cointossing}.
As we measure the convergence speed here with respect to
Wasserstein distances of all orders $p\in [1,\infty)$, we necessarily have to ensure that $Y$ and $X^h$, $h \in (0,\ol h)$, admit finite moments of all orders. This is achieved under~\eqref{cond2} in Section~\ref{sec:fin_mom} below. In fact, \eqref{cond2} is nearly a minimal assumption
required for our results (see Section~\ref{sec:CC}).

The proofs of our results rely on embeddings of the
random bit
Markov chains into the diffusion $Y$ with sequences of stopping times. More precisely,
for every $h\in (0,\ol h)$ we define the sequence of stopping times $(\tau^h_k)_{k \in \mathbb{N}_0}$ by the formulas
$\tau^h_0=0$,
$\tau^h_{k+1}=\inf\{t\ge\tau^h_k:Y_t\notin(Y_{\tau^h_k}-a_h(Y_{\tau^h_k}),Y_{\tau^h_k}+a_h(Y_{\tau^h_k}))\}$,
$k\in\bbN_0$,
and observe that the sequence $(Y_{\tau^h_k})_{k \in \mathbb{N}_0}$ has the same law as the sequence $(X^h_{kh})_{k \in \mathbb{N}_0}$.
Condition~\eqref{cond1} allows to conclude that $\tau^h_{\lfloor T/h \rfloor}$ converges to $T$ with respect to every $L^p$-norm, with $p \in [1,\infty)$, at a rate of at least $\min\{\frac{1}{2},\lambda\}$.
At this point the latter statement can be informally explained
by~\eqref{eq:14042020a1}.
With Condition~\eqref{cond2} and a representation of $Y$ as a time-changed Brownian motion we can estimate the $L^p$-distance between $Y_T$ and $Y_{\tau^h_{\lfloor T/h \rfloor}}$ against the distance between  $\tau^h_{\lfloor T/h \rfloor}$ and $T$, and conclude that $Y_{\tau^h_{\lfloor T/h \rfloor}}$ converges to $Y_T$ in $L^p$ at a rate of at least $\min\{\frac{1}{4},\frac{\lambda}{2}\}$. We stress that our analysis is optimal. Indeed, we show by means of an example that the rate of $L^p$-convergence for $p\in [4,\infty)$
cannot be improved beyond $1/4$.
As the pair $(Y_T, Y_{\tau^h_{\lfloor T/h \rfloor}})$ constitutes  a coupling between the laws of $Y_T$ and $X^h_{h\lfloor T/h \rfloor}$,
we arrive at
an estimate for the convergence rate with respect to the $p$-th Wasserstein distance.
A similar procedure can be used for proving the convergence rate for the path distributions.

\bigskip
The results in the present article generalize the results of our article \cite{aku-jmaa} in several 
directions. In \cite{aku-jmaa} we do not allow for {\itshape all} general diffusions, but consider only diffusions that solve an SDE of the form 
\begin{equation}\label{eq:04102017a1}
dY_t=\eta(Y_t)\,dW_t.
\end{equation}
It is known that \eqref{eq:04102017a1} has a unique in law weak solution
under the Engelbert-Schmidt condition that $\eta$
is a non-vanishing (possibly irregular) Borel function
such that $1/\eta^2$ is locally integrable
(see \cite{ES1985} or Theorem~5.5.7 in \cite{KS}). This is a special case of the setting considered in the present article with $m(dx)=\frac{2}{\eta^2(x)}dx$. The article \cite{aku-jmaa} does not provide a pertubation analysis, i.e., it
only analyzes the situation, where Equation~\eqref{eq:emcel} can be solved {\itshape exactly}
 and not only with a precision of the order $\mathcal{O}(h^{1+\lambda})$.
Moreover, \cite{aku-jmaa} considers the convergence
of the embedded Markov chains
with respect to the $L^2$-norm only
and for the time marginals only.
This neither allows to draw conclusions
about the convergence of the time marginals
in the Wasserstein distance of order $p>2$
nor to say anything about the convergence speed
for the path distributions.
It is shown in \cite{aku-jmaa} that the scheme has the rate $1/4$
for approximating the distribution of $Y$ at single points in time
under the assumption of global boundedness of $|\eta|$ and $1/|\eta|$.
In the present article we replace the boundedness assumption
by the weaker assumption~\eqref{cond2}.
In the context of SDEs, \eqref{cond2} imposes a linear growth condition
on $\eta$ (but no regularity condition).
Note, however, that \eqref{cond2} allows to go beyond SDEs,
e.g., it includes diffusions with sticky features.



As noted in \cite{AJKH}, the Wasserstein distance is an appropriate measure for the distance between the path distribution of a diffusion and its Markov chain approximation.
In \cite{AJKH} every $p$-th
Wasserstein distance between the path distribution of a Lipschitz continuous and uniformly elliptic SDE and its Euler approximation is shown to converge to zero with rate
$\frac23-\eps$ for every positive real~$\eps$.
As discussed above,
the approximations of our algorithm also converge with respect to every $p$-th Wasserstein distance;
we prove convergence of our algorithm in Wasserstein distances for a larger class of diffusions in natural scale but at a slower rate.

For classical results on the approximation of SDEs
with Lipschitz coefficients via the Euler scheme
we refer to the books
\cite{kloeden1992numerical}
and \cite{Pages:18}
and to references therein.
The Euler scheme is known to converge also for some SDEs with non-Lipschitz coefficients, and results on the convergence speed of the weak and strong approximation error exist; see, e.g.,
\cite{KHLY:JCAM2017}
and
\cite{NgoTaguchi:SPL2017}
and the references therein
for results on the weak and strong convergence
for the Euler-type approximations
of SDEs with discontinuous coefficients
(note, however, that the Euler scheme
may fail to converge in the numerically weak sense
even in the case of continuous coefficients,
see \cite{HJK}).
\cite{KonMen:17} and \cite{Frikha:18}
establish weak convergence rate
of certain Euler-type schemes for diffusions
with H\"older coefficients,
and the rate is a half of the H\"older exponent.
In contrast to our scheme,
however, the Euler scheme is defined only for SDEs
(not for \emph{all} general diffusions)
and may fail to converge even in the stochastically weak sense
when the SDE coefficients are irregular
(see Section~5.4 in \cite{aku-jmaa}).

The random bit Markov chains can be embedded
into the general diffusion $Y$ with a sequence of stopping times,
and hence the Markov chains can be interpreted
as exact appearances of $Y$ along a stochastic time grid.
There are a few related schemes in the literature
employing random time grids to approximate solutions
of one-dimensional SDEs.
In \cite{EL} the authors
construct Bernoulli random walks on finite grids in the space,
create random walks on certain associated random time grids,
and compute the rate with which these random walks
converge in $L^1$.
A similar scheme is introduced in \cite{milstein2015uniform}
for approximating the CIR process.
In contrast to \cite{EL}
the scheme of \cite{milstein2015uniform}
is exact along a certain sequence of stopping times.
A further scheme,
exact on a time grid with exponentially distributed steps,
and applicable to SDEs with discontinuous coefficients
is suggested in \cite{LLP2019}.
The results in
\cite{EL}, \cite{LLP2019} and \cite{milstein2015uniform}
apply to SDEs with drift, whereas the present article
puts a focus on arbitrary general diffusions in natural scale
and hence has a different scope.

\bigskip
To summarize the discussion in the introduction,
we can say that in the literature there are
plenty of results that study approximations
of solutions to SDEs.
There are, however, many general diffusions
that cannot be written as solutions to SDEs.
This is, in particular, the case for general diffusions
with sticky features and, in such a situation,
reference \cite{aku2018cointossing}
as well as the present paper come into play.
While reference \cite{aku2018cointossing}
deals with approximations in the sense
of weak convergence,
in the present paper we aim
at approximating general diffusions
in a stronger sense\footnote{Notice
that the convergence in every $p$-th
Wasserstein distance
implies the convergence of expectations
for all continuous path functionals
with polynomial growth,
whereas the weak convergence
is the convergence of expectations
for all bounded continuous path functionals.}
and prove the rate of $1/4$.
Essentially, the price for obtaining these stronger results
is Condition~\eqref{cond2}, which is, as discussed,
not required in \cite{aku2018cointossing}
but is nearly a minimal assumption
for the results of the present paper to hold.
Also, the techniques used in the proofs of this paper
differ from those in \cite{aku2018cointossing}:
while in \cite{aku2018cointossing}
we proceed through various moment estimates
for the embedding stopping times,
in the present paper we explicitly work with
the random time change involved
in the construction of a general diffusion.
Finally, it is worth noting that the rate of $1/4$
cannot be considered as too slow,
as in \emph{almost all}\footnote{The exceptions
are \cite{aku-jmaa} and \cite{EL}.
We recall that the paper \cite{EL}
discusses different results from ours
for an algorithm that has a different scope.
As for \cite{aku-jmaa}, we refer to the paragraph
containing~\eqref{eq:04102017a1},
where we explain in detail that the present paper
substantially generalizes \cite{aku-jmaa}
in many directions.} references above
involving approximation methods for SDEs
with some rate, the rate degenerates
as the diffusion coefficient loses regularity.
On the contrary, in our setting the diffusion coefficient
of an SDE is allowed to be arbitrarily irregular
(just a Borel function);
the algorithm even allows to consider
general diffusions with sticky points or the like,
which cannot be written as solutions to SDEs;
our scheme adapts in the way that preserves
the rate of $1/4$ in any case.

\bigskip
The paper is organized as follows.
In Section~\ref{sec:scheme}
we formally describe the processes we approximate,
the approximation schemes used for this aim
and formulate and discuss our main results.
Section~\ref{sec:prop_appr}
studies some useful properties of the approximation schemes.
Section~\ref{sec:fin_mom} establishes moment bounds
for the general diffusion and for the approximations.
In Section~\ref{sec:embedding}
we describe the embedding of the approximating
random bit Markov chains into the general diffusion
and present (optimal) $L^p$-convergence rates
for the involved stopping times.
After the preparations done in
Sections \ref{sec:prop_appr}, \ref{sec:fin_mom} and~\ref{sec:embedding},
we prove our main results in
Sections \ref{sec:rate} and~\ref{sec:rate_path}.
Finally, in Section~\ref{sec:CC}
we present examples showing
that Condition~\eqref{cond2}
is essential for our results.

\section{Approximation schemes and main results}\label{sec:scheme}
Let $(\Omega, \cF, (\cF_t)_{t \ge 0}, (P_y)_{y \in I}, (Y_t)_{t \ge 0})$ be a one-dimensional continuous strong Markov process in the sense of Section~VII.3 in \cite{RY}. We refer to this class of processes as {\itshape general diffusions} in the sequel. We assume that the state space is an open, half-open or closed interval $I \subseteq \R$. We denote by $I^\circ=(l,r)$ the interior of $I$, where $-\infty\leq l<r\leq \infty$, and we set $\ol I=[l,r]$.
Recall that by the definition we have $P_y[Y_0=y]=1$ for all $y\in I$. 
We further assume that $Y$ is regular. This means that for every $y\in I^\circ$ and $x\in I$ we have that $P_y[H_x(Y)<\infty]>0$, where $H_x(Y)=\inf\{t\geq 0: Y_t=x \}$
(with the usual convention $\inf\emptyset=\infty$).
If there is no ambiguity,
we simply write $H_x$ in place of $H_x(Y)$. Moreover, for $a<b$ in $\ol I$ we denote by $H_{a,b}=H_{a,b}(Y)$
the first exit time of $Y$ from $(a,b)$,
i.e.\ $H_{a,b} = H_a\wedge H_b$.
We suppose that the diffusion $Y$ is in natural scale. If $Y$ is not in natural scale, then there exists a strictly increasing continuous function $s:I \to \R$, the so-called scale function, such that $s(Y_t)$, $t\geq 0$, is in natural scale. 
Let $m$ be the speed measure of the Markov process $Y$
(see VII.3.7 and~VII.3.10 in \cite{RY}).
Recall that for all $a<b$ in $I^\circ$ we have
\begin{equation}\label{eq:06072018a1}
0<m([a,b])<\infty.
\end{equation}
Finally,
we always assume that if a boundary point is accessible, then it is absorbing. 
We refer to Remark~\ref{rem:reflection} on how to drop this assumption and to also allow for reflecting boundaries.

Let $\ol h \in (0,1)$ and suppose that for every $h \in (0, \ol h)$ we are given a measurable function $a_{h}\colon \ol I \to [0,\infty)$ such that $a_h(l)=a_h(r)=0$ and for all $y\in I^\circ$ we have $y\pm a_h(y)\in I$. We refer to each function $a_h$ as a \emph{scale factor}.
We next construct a sequence of Markov chains associated to the family of scale factors $(a_h)_{h\in (0, \ol h)}$.
To this end fix a starting point $y \in I^\circ$ of $Y$.
Let $(\xi_k)_{k \in \bbN}$ be an iid sequence of random variables,
on a probability space with a measure $P$,
satisfying $P(\xi_k = \pm 1) = \frac12$. 
We denote by $(X^h_{kh})_{k \in \bbN_0}$ the Markov chain defined by
\begin{equation}\label{eq:def_X}
X^h_0 = y
\quad\text{and}\quad
X^h_{(k+1)h} = X^h_{kh}+ a_h(X^h_{kh}) \xi_{k+1}, \quad \text{ for } k \in \bbN_0. 
\end{equation}
We extend $(X^h_{kh})_{k \in \bbN_0}$ to a continuous-time process
$X^h=(X^h_t)_{t\in[0,\infty)}$
by linear interpolation, i.e., for all $t\in[0,\infty)$, we set
\begin{equation}\label{eq:13112017a1}
X^h_t = X^h_{\lfloor t/h \rfloor h} + (t/h - \lfloor t/h \rfloor) (X^h_{(\lfloor t/h \rfloor +1)h} - X^h_{\lfloor t/h \rfloor h}). 
\end{equation}
The approximation schemes we consider in this paper
are the families of processes $(X^h)_{h\in(0,\ol h)}$
(which are encoded by the families $(a_h)_{h\in(0,\ol h)}$
of the scale factors).
To highlight the dependence of $X^h$ on the starting point $y\in I^\circ$ we also sometimes write~$X^{h,y}$.

To formulate our main results we need the additional assumption that
the Cauchy distribution has a bounded density
with respect to the speed measure.
More precisely, we suppose
that the following condition is satisfied.

\medskip

\textbf{Condition~(C)} 
There exist constants $k_1\in (0,\infty)$, $k_2\in\{0,1\}$ such that on $I^\circ$  we have
$m(dx)\ge \frac{2}{k_1(1+k_2x^2)}\,dx$, i.e., for all $A \in \mathcal B(I^\circ)$ we have $m(A)\ge \int_A \frac{2}{k_1(1+k_2x^2)}\,dx$.

\begin{remark}\label{rem:07032019a1}
In the formulation of Condition~(C) we allow for two degrees of freedom $k_1\in (0,\infty)$, $k_2\in\{0,1\}$. This might seem superfluous.
Indeed, if Condition~(C) is satisfied with $k_2=0$, then it also holds with $k_2=1$. 
In the results that follow, however, we get sharper bounds if Condition~(C) is satisfied with $k_2=0$. In particular, the constants appearing in Theorem~\ref{thm:wasserstein_term} and Theorem~\ref{thm:wasserstein_path} do not depend on the initial value $y$ of the diffusion $Y$ in the case $k_2=0$.
\end{remark}

\begin{ex}[Driftless SDE with possibly irregular diffusion coefficient]
A particular case of our setting is the case,
where $Y$ is a solution to the driftless SDE
\begin{equation}\label{eq:27092018a1}
dY_t=\eta(Y_t)\,dW_t,
\end{equation}
where $\eta\colon I^\circ\to\bbR$ is a Borel function
satisfying the Engelbert-Schmidt conditions
\begin{gather}
\eta(x)\ne0\;\;\forall x\in I^\circ,
\label{eq:27092018a2}\\[1mm]
\eta^{-2}\in L^1_{\loc}(I^\circ)
\label{eq:27092018a3}
\end{gather}
($L^1_{\loc}(I^\circ)$ denotes the set of Borel functions
locally integrable on~$I^\circ$).
Under \eqref{eq:27092018a2}--\eqref{eq:27092018a3}
SDE~\eqref{eq:27092018a1}
has a unique in law weak solution
(see \cite{ES1985} or Theorem~5.5.7 in \cite{KS}). 
In this case the speed measure of $Y$
on $I^\circ$ is given by the formula
\begin{equation}\label{eq:speed_measure_sde}
m(dx)=\frac 2{\eta^2(x)}\,dx.
\end{equation}
We refer to Section 2 in \cite{aku2018cointossing} 
for further details on the application of  the approximation scheme to SDEs.
If $m$ is given by \eqref{eq:speed_measure_sde}, Condition~(C) means that there
exist constants $k_1\in (0,\infty)$, $k_2\in\{0,1\}$ such that for all $x\in I^\circ$
it holds that $\eta(x)\le \sqrt{k_1(1+k_2x^2)}$, i.e., the diffusion coefficient $\eta$ is locally bounded and whenever the state space is unbounded $\eta$ is of at most  linear growth.
If $k_2=0$, then under Condition~(C) the diffusion $Y$ does not move faster than a Brownian motion scaled by~$\sqrt{k_1}$.
\end{ex}

To formulate our second main assumption we need to introduce some notation.
We introduce an auxiliary subset of $I^\circ$.
To this end, if $l> -\infty$, we define, for all $h\in(0,\ol h)$,
\begin{equation}\label{eq:22022019b1}
l_h = l+ \inf\left\{ a \in \left(0,\frac{r-l}2\right]:
a<\infty
\;\;\text{and}\;\;
\frac12\int_{(l, l+2a)} (a - |u-(l+a)|) m(du) \ge h \right\}, 
\end{equation}
where we use the convention
$\inf \emptyset = \frac{r-l}2$.
If $l = -\infty$, we set $l_h = -\infty$. 
Similarly, if $r < \infty$, then we define, for all $h\in(0,\ol h)$,
\begin{equation}\label{eq:22022019b2}
r_h = r- \inf\left\{ a \in \left(0,\frac{r-l}2\right]:
a<\infty
\;\;\text{and}\;\;
\frac12\int_{(r-2a, r)} (a - |u-(r-a)|) m(du) \ge h \right\}
\end{equation}
with the same convention $\inf \emptyset = \frac{r-l}2$.
If $r = \infty$, we set $r_h = \infty$.
In any case, it holds $l_h\le r_h$ and, moreover, $l_h\le\frac{l+r}2\le r_h$ whenever $l$ and $r$ are finite.
The auxiliary subset is defined by
\begin{equation}\label{eq:22022019a2}
I_h = (l_h,r_h)\cup\left\{y\in I^\circ \colon y\pm a_h(y)\in I^\circ\right\}.
\end{equation}
With the help of Feller's test for explosions (see, e.g., Theorem~23.12 in \cite{Kallenberg2002} or Lemma~2.1 in \cite{AKKK17} or Theorem~3.3 in \cite{AKKU2018}) one can show that $l$ is inaccessible if and only if $l_h = l$ for all $h\in (0,\ol h)$ (for details, see Remark~4.2 in \cite{aku2018cointossing}). Similary, $r$ is inaccessible if and only if $r_h = r$ for all $h\in (0,\ol h)$. In particular, if both $l$ and $r$ are inaccessible,
then it holds for all $h\in (0,\ol h)$ that $I_h=I=I^\circ$. If $l$ or $r$ are accessible it holds that $l_h\searrow l$ or $r_h\nearrow r$, respectively, as $h\searrow 0$.

The second assumption we need for our main results is the following condition on the scale factors.

\textbf{Condition~(A$\lambda$)} There exist $\lambda\in (0,\infty)$, $K\in [0,\infty)$
 such that for all $h\in (0,\ol h)$ it holds
 \begin{equation}\label{eq:condalambda}
\sup_{y\in I_h } 
\left|
\frac{1}{2}\int_{(y-a_h(y),y+a_h(y))} (a_h(y)-|u-y|)\,m(du)
-h
\right|
\le K h^{1+\lambda}
\end{equation}
For every $\lambda\in (0,\infty)$ Condition~(A$\lambda$) implies 
Condition~(A) in \cite{aku2018cointossing}. We refer to the introduction in 
\cite{aku2018cointossing} for an
interpretation of the left-hand side of \eqref{eq:condalambda} as an upper bound for the one-step temporal error of the approximation scheme.

Throughout the paper we make a convention that
if Condition~(A$\lambda$) is satisfied for some $\lambda\in (0,\infty)$, then $\ol h$ is chosen small enough such that
for all $h\in (0,\ol h)$ it holds
\begin{equation}\label{eq:gamma_cond}
\sup_{y\in I_h } 
\left|
\frac{1}{2}\int_{(y-a_h(y),y+a_h(y))} (a_h(y)-|u-y|)\,m(du)
-h
\right|\le \gamma h
\end{equation}
with some $\gamma \in [0,1)$.
In fact, in many auxiliary results that follow
(see Sections \ref{sec:prop_appr} and~\ref{sec:fin_mom})
we assume only the weaker Condition~\eqref{eq:gamma_cond}.
The stronger Condition~(A$\lambda$)
is generally needed to get rates of convergence.

Before formulating our main results
we present an example of an approximation scheme,
called EMCEL in \cite{aku2018cointossing},
which is well-defined for every general diffusion $Y$
and satisfies Condition~(A$\lambda$)
for all $\lambda\in(0,\infty)$.

\begin{ex}[EMCEL approximations]\label{ex:25022019a1}
Let $h\in(0,\ol h)$.
The EMCEL$(h)$ scale factor $\wh a_h$ is defined by
$\wh a_h(l)=\wh a_h(r)=0$
and, for all $y\in I^\circ$,
\begin{equation}\label{sf absorbing case}
\wh a_h(y) = \sup\left\{a \ge 0: y\pm a \in I \text{ and } \frac{1}{2}\int_{(y-a,y+a)} (a-|z-y|)\,m(dz) \le h\right\}.
\end{equation}
The associated process
defined in \eqref{eq:def_X}--\eqref{eq:13112017a1}
is denoted by $\wh X^h$ and referred to as
Embeddable Markov Chain with Expected time Lag~$h$
(we write shortly $\wh X^h\in\text{EMCEL}(h)$).\footnote{The
whole family $(\wh X^h)_{h\in (0,\ol h)}$ is referred to as the
\emph{EMCEL approximation scheme.}
Alternatively, we simply say
\emph{EMCEL approximations.}}
Expression~\eqref{sf absorbing case}
can be alternatively described as follows.
For $y\in(l_h,r_h)$, $\wh a_h(y)$ is a unique positive root
of the equation (in~$a$)
\begin{equation}\label{eq:22022019a3}
\frac{1}{2}\int_{(y-a,y+a)} (a-|z-y|)\,m(dz) = h,
\end{equation}
while, for $y\in(l,l_h]$ (resp., $y\in[r_h,r)$),
$\wh a_h(y)$ is chosen to satisfy
\begin{equation}\label{eq:22022019a4}
y-\wh a_h(y)=l\qquad\text{(resp., }y+\wh a_h(y)=r).
\end{equation}
It follows from~\eqref{eq:22022019a4} that
the set $I_h$ of~\eqref{eq:22022019a2}
corresponding to the EMCEL$(h)$ scale factor $\wh a_h$
(and, naturally, denoted by $\wh I_h$) is simply
$$
\wh I_h=(l_h,r_h).
$$
This yields that the left-hand side of~\eqref{eq:condalambda}
vanishes for the EMCEL approximations and,
therefore, for such a scheme Condition~(A$\lambda$)
is satisfied for all $\lambda\in(0,\infty)$.
\end{ex}

We first formulate our main result concerning 
the Wasserstein approximation rate for time marginals of $Y$.
To this end we briefly recall the definitions of Wasserstein distances.
For $p\in [1,\infty)$ let $\mathcal M_p(\R)$ denote the set of all probability measures on $(\R, \mathcal B(\R))$ with finite $p$-th moment, i.e., probability measures $\mu$ satisfying $\int |x|^p \mu(dx)<\infty$.
The $p$-th Wasserstein distance between two probability measures $\mu, \nu \in \mathcal M_p(\R)$ is defined by 
$$
\mathcal W_p(\mu,\nu)=\inf \|\xi -\zeta\|_{L^p},
$$
where the infimum is taken over all
\emph{couplings} between $\mu$ and $\nu$,
that is, over all
random vectors $(\xi,\zeta)$ with marginals $\mu$ and $\nu$
(i.e., $\xi\sim \mu$ and $\zeta\sim \nu$).

\begin{theo}\label{thm:wasserstein_term}
Suppose that Condition~(C) is satisfied and that there exists $\lambda \in (0,\infty)$ such that Condition~(A$\lambda$)
holds. Let $p\in [1,\infty)$ and $T\in (0,\infty)$.
Then there exists a constant $C(p,T) \in [0,\infty)$ such that for all $y\in I^\circ$ and $h\in (0,\ol h)$ it holds
$Y_T\in L^p(P_y)$,
$X^{h,y}_{h\lfloor T/h \rfloor}\in L^p(P)$,
$X^{h,y}_T\in L^p(P)$,
\begin{equation}\label{eq:24022019c1}
\mathcal W_p(P\circ (X^{h,y}_{h\lfloor T/h \rfloor})^{-1}, P_y \circ (Y_T)^{-1})  \le C(p,T)
(1+k_2 |y|)
h^{\min\{\frac{1}{4},\frac{\lambda}{2}\}}
\end{equation}
and
\begin{equation}\label{eq:24022019c2}
\mathcal W_p(P\circ (X^{h,y}_T)^{-1}, P_y \circ (Y_T)^{-1})  \le C(p,T)
(1+k_2 |y|)
h^{\min\{\frac{1}{4},\frac{\lambda}{2}\}}.
\end{equation}
\end{theo}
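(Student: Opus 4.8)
Fix $y\in I^\circ$, $p\in[1,\infty)$ and $T\in(0,\infty)$, and put $n=\lfloor T/h\rfloor$. The plan is to work with the embedding described in the introduction: let $(\tau^h_k)_{k\in\bbN_0}$ be the stopping times given by $\tau^h_0=0$ and $\tau^h_{k+1}=\inf\{t\ge\tau^h_k:Y_t\notin(Y_{\tau^h_k}-a_h(Y_{\tau^h_k}),Y_{\tau^h_k}+a_h(Y_{\tau^h_k}))\}$, so that under $P_y$ the sequence $(Y_{\tau^h_k})_{k\in\bbN_0}$ has the same law as $(X^{h,y}_{kh})_{k\in\bbN_0}$ under $P$. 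Then $(Y_{\tau^h_n},Y_T)$ is a coupling of $P\circ(X^{h,y}_{hn})^{-1}$ and $P_y\circ(Y_T)^{-1}$, and $a_h(Y_{\tau^h_n})$ has the same law as $a_h(X^{h,y}_{hn})$; the required $p$-th moments are finite by Section~\ref{sec:fin_mom}. Since \eqref{eq:13112017a1} gives $|X^{h,y}_T-X^{h,y}_{hn}|\le a_h(X^{h,y}_{hn})$, two applications of the triangle inequality for $\mathcal W_p$ (through the law of $X^{h,y}_{hn}$) would reduce \eqref{eq:24022019c1} and \eqref{eq:24022019c2} to the two $L^p$-estimates
\[
\bigl\|Y_{\tau^h_n}-Y_T\bigr\|_{L^p(P_y)}\le C\,(1+k_2|y|)\,h^{\min\{1/4,\lambda/2\}}\qquad\text{and}\qquad\bigl\|a_h(Y_{\tau^h_n})\bigr\|_{L^p(P_y)}\le C\,(1+k_2|y|)\,h^{1/2},
\]
with a constant $C=C(p,T)$; note that $h^{1/2}\le h^{\min\{1/4,\lambda/2\}}$ for $h\in(0,1)$.

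\textbf{The scale-factor estimate.}
I would dispatch the bound on $a_h(Y_{\tau^h_n})$ first, as it is the easier one. Using Condition~(C) to bound $\tfrac12\int_{(z-a,z+a)}(a-|u-z|)\,m(du)$ from below by a constant multiple of $a^2/(1+k_2z^2)$ for $a$ in a bounded range, and comparing this with the upper bound $(1+\gamma)h$ from \eqref{eq:gamma_cond} for $z\in I_h$ — together with the control of the near-boundary scale factors and of $l_h-l$, $r-r_h$ from Section~\ref{sec:prop_appr} — one obtains $a_h(z)\le c_1(1+k_2|z|)\,h^{1/2}$ for all $z\in\ol I$. The second estimate then follows from $\|a_h(Y_{\tau^h_n})\|_{L^p(P_y)}\le c_1h^{1/2}\,\|1+k_2|Y_{\tau^h_n}|\|_{L^p(P_y)}$ and the uniform-in-$h$ moment bound for $Y_{\tau^h_n}$ (equivalently $X^{h,y}_{hn}$) from Section~\ref{sec:fin_mom}, which is $y$-independent when $k_2=0$ and of order $1+|y|$ when $k_2=1$.

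\textbf{The main estimate via time change.}
For $\|Y_{\tau^h_n}-Y_T\|_{L^p}$ I would invoke the representation of $Y$ as a time-changed Brownian motion (cf.\ \cite{RY}): take a Brownian motion $B$ started at $y$ and stopped upon reaching an accessible boundary of $I$, let $(L^x_t)$ be its family of local times, set $A_t=\int L^x_t\,m(dx)$, and let $\Gamma=A^{-1}$ be the right-continuous inverse; then $Y=B\circ\Gamma$, the $\tau^h_k$ become time-changed exit times of $B$, and $\Gamma_T=\inf\{u:A_u>T\}$ is a $B$-stopping time. Put $U=\Gamma_{T\wedge\tau^h_n}$ and $V=\Gamma_{T\vee\tau^h_n}$; these are $B$-stopping times with $U\le V$, and $|Y_T-Y_{\tau^h_n}|=|B_V-B_U|$ because $\Gamma$ is nondecreasing. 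On the event that $Y$ is not absorbed by time $T\vee\tau^h_n$ the occupation-times formula gives $|T-\tau^h_n|=A_V-A_U=\int(L^x_V-L^x_U)\,m(dx)$; since $L^x_V-L^x_U$ vanishes for $|x|>\sup_{u\le V}|B_u|=\sup_{t\le T\vee\tau^h_n}|Y_t|$, Condition~(C) yields
\[
V-U=\int(L^x_V-L^x_U)\,dx\le\frac{k_1}{2}\Bigl(1+k_2\sup\nolimits_{t\le T\vee\tau^h_n}Y_t^2\Bigr)\,|T-\tau^h_n|.
\]
Conditioning on $\cF_U$ and applying the Burkholder--Davis--Gundy inequality to the continuous martingale $(B_{U+t}-B_U)_{t\ge0}$ stopped at $V-U$ (truncating at $V\wedge N$ and letting $N\to\infty$ to justify the optional-stopping step) gives $E_y\bigl[|B_V-B_U|^p\bigm|\cF_U\bigr]\le c_p\,E_y\bigl[(V-U)^{p/2}\bigm|\cF_U\bigr]$. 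Taking expectations, inserting the last display and — in the case $k_2=1$ — separating the $\sup|Y|$-factor from $|T-\tau^h_n|$ by Hölder's inequality while using the moment bounds of Section~\ref{sec:fin_mom} on $\sup_{t\le T\vee\tau^h_n}|Y_t|$ (which grow like $1+|y|$), one would obtain $\|Y_T-Y_{\tau^h_n}\|_{L^p(P_y)}\le C\,(1+k_2|y|)\,\|T-\tau^h_n\|_{L^{q}(P_y)}^{1/2}$ for a suitable $q\in[1,\infty)$. The $y$-independent $L^{q}$-rate $\|T-\tau^h_n\|_{L^{q}(P_y)}\le C(q,T)\,h^{\min\{1/2,\lambda\}}$ for the embedding stopping times — which holds for every $q\in[1,\infty)$ by Section~\ref{sec:embedding} under Condition~(A$\lambda$) — then produces the exponent $\min\{1/4,\lambda/2\}$. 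On the complementary event $\{Y$ absorbed by $T\vee\tau^h_n\}$ one has either $Y_T=Y_{\tau^h_n}$ or, essentially, $\tau^h_n>T$, so a Cauchy--Schwarz bound against $P_y(\tau^h_n>T)$ and the $L^{2p}$-moments of $Y_T$ and $Y_{\tau^h_n}$ controls this contribution to the same order; this yields the first estimate and hence the theorem.

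\textbf{Main obstacle.}
The delicate point is the case $k_2=1$ of Condition~(C): there the clock comparison $\Gamma_b-\Gamma_a\le\tfrac{k_1}{2}(b-a)$ no longer holds with a path-independent constant, so one must localise via $\sup_{t\le T\vee\tau^h_n}|Y_t|$ and then track its moments carefully — including the dependence on the random upper limit $T\vee\tau^h_n$ and the fact that they grow only linearly in $1+|y|$, which is what is needed to recover the precise factor $1+k_2|y|$ in the statement. The remaining care is routine: legitimising the optional-stopping/Burkholder--Davis--Gundy step at the possibly infinite stopping time $V$ (truncation plus uniform integrability from the $L^p$-bounds), and absorbing the boundary event just described.
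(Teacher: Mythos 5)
Your proposal follows essentially the same route as the paper's: embed the chain via the exit times $\tau^h_k$, use $(Y_{\tau^h_n},Y_T)$ with $n=\lfloor T/h\rfloor$ as a coupling, reduce \eqref{eq:24022019c2} to the scale-factor bound $a_h(z)\le C(1+k_2|z|)\sqrt h$ (Proposition~\ref{prop:cond_d_lin_growth} combined with the uniform moment bounds), and control $\|Y_{\tau^h_n}-Y_T\|_{L^p(P_y)}$ through the time-change representation, Burkholder--Davis--Gundy, the clock comparison coming from Condition~(C), Cauchy--Schwarz, and the $L^q$-rate $h^{\min\{1/2,\lambda\}}$ for $\tau^h_n$ from Proposition~\ref{cor:26112018a1}; this is exactly Theorem~\ref{thm:rate_term} and its use in Section~\ref{sec:rate}, up to presentational differences (triangle inequality in $\mathcal W_p$ versus the interpolated coupling $(Y^h_T,Y_T)$ of \eqref{eq:24022019b1}, and a conditional rather than unconditional application of BDG).

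The one step that does not survive scrutiny as written is your treatment of the absorption event. First, the dichotomy ``either $Y_T=Y_{\tau^h_n}$ or essentially $\tau^h_n>T$'' is false: when $\tau^h_n<T$ the process can be absorbed in $(\tau^h_n,T]$, in which case $Y_{\tau^h_n}$ is interior while $Y_T$ is a boundary value. Second, and decisively, $P_y(\tau^h_n>T)$ is not small --- since $E_y[\tau^h_n]=h\lfloor T/h\rfloor\approx T$, this probability is of order one --- so a Cauchy--Schwarz bound against it yields an $O(1)$ contribution rather than $O(h^{\min\{1/4,\lambda/2\}})$ (compare the path-space proof, where the paper works with $P_y(\tau^h_{\lfloor T/h\rfloor}\ge T+1)\le C h^{\lambda p}$: the shift by $+1$ is what makes the Markov/Chebyshev step effective). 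Fortunately the case split is unnecessary: with $U=\Gamma_{T\wedge\tau^h_n}$ and $V=\Gamma_{T\vee\tau^h_n}$ one always has $A_V-A_U=\min(T\vee\tau^h_n,H_{l,r}(Y))-\min(T\wedge\tau^h_n,H_{l,r}(Y))\le|T-\tau^h_n|$, because the additive functional is frozen after absorption, so your clock-comparison inequality $V-U\le \const\cdot k_1\bigl(1+k_2\sup_{t\le T\vee\tau^h_n}Y_t^2\bigr)\,|T-\tau^h_n|$ holds on the whole probability space (only the identity $A_V-A_U=|T-\tau^h_n|$ needs non-absorption, and only the inequality is used). This is precisely how the paper avoids any case distinction: \eqref{eq:ub_gamma2} is established for all $s<t$ using that $\gamma$ is constant on $[H_{l,r}(Y),\infty)$. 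With that local repair your argument coincides with the paper's; the remaining ingredients you cite (moment bounds up to $T\vee\tau^h_n$, the $(1+k_2|y|)$ bookkeeping, the stopping-time rate) are exactly those used in the paper.
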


\begin{remark}
In the case where $Y$ does not move faster than a scaled Brownian motion (i.e., $k_2=0$), the convergence in Theorem~\ref{thm:wasserstein_term} is uniform in the starting point $y\in I^\circ$ (the same applies to Theorem~\ref{thm:wasserstein_path} below).
\end{remark}

The proof of Theorem~\ref{thm:wasserstein_term} is provided in Section~\ref{sec:rate}.
Next, we describe  the rate at which
the law of $X^{h,y}$ converges to the one of $Y$ with respect to
the $p$-th Wasserstein distance
on the path space $C([0,T],I)$
endowed with the sup norm.
In what follows, we use the notation
$$
\|x\|_C=\sup_{t\in[0,T]}|x(t)|
$$
for the sup norm on $C([0,T],I)$.
Let $\mathcal M_p(C([0,T],I))$ denote the set of all probability measures $\mu$ on $C([0,T],I)$
(equipped with the Borel $\sigma$-field)
with finite $p$-th moment, that is,
$\int \|x\|_C^p\,\mu(dx)<\infty$.
The $p$-th Wasserstein distance between
$\mu,\nu\in\mathcal M_p(C([0,T],I))$ is defined by 
$$
\mathcal W_p(\mu,\nu)=\inf \left \|\sup_{t\in [0,T]}|\xi_t -\zeta_t| \right \|_{L^p},
$$
where the infimum is taken over all couplings between $\mu$ and $\nu$, i.e., over all random elements $(\xi,\zeta)$ taking values in $C([0,T],I^2)$ with marginals $\mu$ and $\nu$
(that is, $\xi\sim \mu$ and $\zeta\sim \nu$).

\begin{theo}\label{thm:wasserstein_path}
Suppose that Condition~(C) is satisfied and that there exists $\lambda \in (0,\infty)$ such that Condition
(A$\lambda$) holds. Let $p\in [1,\infty)$, $T\in (0,\infty)$ and $\varepsilon \in (0,\min\{\frac{1}{4},\frac{\lambda}{2}\})$.
Then there exists a constant $C(p,\varepsilon, T) \in [0,\infty)$ such that for all $y\in I^\circ$ and $h\in (0,\ol h)$ it holds that
\begin{equation}\label{eq:24022019d1}
P_y \circ (Y)^{-1}
\text{ and }
P\circ (X^{h,y})^{-1}
\text{ are in }
\cM_p(C([0,T],I))
\end{equation}
and
$$
\mathcal W_p(P\circ (X^{h,y})^{-1}, P_y \circ (Y)^{-1}) \le C(p,\varepsilon,T) (1+k_2|y|) h^{\min\{\frac{1}{4},\frac{\lambda}{2}\}-\varepsilon}.
$$
\end{theo}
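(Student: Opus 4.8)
The plan is to build an explicit coupling of the two path laws on $C([0,T],I)$ by using the embedding of the random bit Markov chain $X^{h,y}$ into $Y$ via the stopping times $(\tau^h_k)_{k\in\bbN_0}$ described in the introduction, and then to control the $L^p$-norm of the uniform distance between the interpolated chain and $Y$. Concretely, on the probability space carrying $Y$ (under $P_y$) define the piecewise-linear process $\wt X^h$ by setting $\wt X^h_{kh}=Y_{\tau^h_k}$ and interpolating linearly on $[kh,(k+1)h]$; by the strong Markov property $\wt X^h$ has the same law as $X^{h,y}$, so $(Y,\wt X^h)$ is a coupling of $P_y\circ Y^{-1}$ and $P\circ (X^{h,y})^{-1}$. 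It then suffices to bound $\bigl\|\,\sup_{t\in[0,T]}|Y_t-\wt X^h_t|\,\bigr\|_{L^p(P_y)}$ by a constant times $h^{\min\{1/4,\lambda/2\}-\eps}$, with the constant of the claimed form $C(p,\eps,T)(1+k_2|y|)$.

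First I would split $\sup_{t\le T}|Y_t-\wt X^h_t|$ into two contributions. The first is the error at the grid times combined with the oscillation of $Y$ between consecutive stopping times: for $t\in[kh,(k+1)h]$ with $(k+1)h\le$ (roughly) $T$, $|\wt X^h_t-Y_{\tau^h_k}|\le a_h(Y_{\tau^h_k})=|Y_{\tau^h_{k+1}}-Y_{\tau^h_k}|$, so the interpolation never strays further from $Y_{\tau^h_k}$ than $Y$ itself does on $[\tau^h_k,\tau^h_{k+1}]$; hence everything reduces to controlling $\sup_k \sup_{\tau^h_k\le s\le \tau^h_{k+1}}|Y_s-Y_{\tau^h_k}|$ together with the modulus of continuity of $Y$ over time increments of size $|\tau^h_{\lfloor T/h\rfloor}-T|$ and $\max_k(\tau^h_{k+1}-\tau^h_k)$. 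The second contribution handles the terminal mismatch between $T$ and $\tau^h_{\lfloor T/h\rfloor}$. The key inputs are: (i) the $L^p$-rate $\min\{1/2,\lambda\}$ for $|\tau^h_{\lfloor T/h\rfloor}-T|$ and, more importantly, a bound on $\max_{k\le \lfloor T/h\rfloor}(\tau^h_{k+1}-\tau^h_k)$, which from Condition~(A$\lambda$) and~\eqref{eq:14042020a1} should be of order $h$ (up to logarithmic or $h^{-\eps}$ factors coming from a union bound over $\lfloor T/h\rfloor$ steps); and (ii) the representation of $Y$ as a time-changed Brownian motion $Y_t=B_{\Gamma_t}$ with $\Gamma$ the inverse of the additive functional built from $m$, which under Condition~(C) grows at most linearly (this is exactly the content of Section~\ref{sec:fin_mom} and Section~\ref{sec:embedding}). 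Combining (ii) with the Lévy modulus of continuity of Brownian motion turns a time increment of order $\delta$ into a spatial increment of order $\sqrt{\delta\log(1/\delta)}$, and $\sqrt{h}$ is exactly $h^{1/2}$, which after absorbing the logarithm into $h^{-\eps}$ and taking into account the $1/2$ loss from the square root yields the rate $\min\{1/4,\lambda/2\}-\eps$. The factor $(1+k_2|y|)$ enters through the linear-growth control of the time change when $k_2=1$.

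I would carry this out in the following order: (1) set up the embedded coupling $(Y,\wt X^h)$ and record that $\wt X^h\overset{d}{=}X^{h,y}$; (2) reduce the uniform path error to (a) $\sup_{t\le T}|Y_t-Y_{t\wedge\tau^h_{\lfloor T/h\rfloor}}|$-type terms controlled by the terminal time discrepancy, and (b) $\max_{k}\sup_{\tau^h_k\le s\le\tau^h_{k+1}\wedge\,\cdot}|Y_s-Y_{\tau^h_k}|$; (3) invoke the $L^p$-bounds on $|\tau^h_{\lfloor T/h\rfloor}-T|$ and on $\max_k(\tau^h_{k+1}-\tau^h_k)$ from Section~\ref{sec:embedding}; (4) pass through the Brownian time change and apply the uniform modulus of continuity of Brownian motion on a random but $L^p$-controlled time horizon, using Condition~(C) to keep the time horizon linearly bounded and to get the $(1+k_2|y|)$-dependence; (5) assemble the pieces, choosing the $h^{-\eps}$ slack to swallow all logarithmic factors and the union bound over the $\lfloor T/h\rfloor$ steps, and conclude. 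The membership~\eqref{eq:24022019d1} in $\cM_p$ follows from the finite-moment estimates of Section~\ref{sec:fin_mom} applied to the sup norm.

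The main obstacle I anticipate is step (4): controlling $\sup_{t\le T}|Y_t-\wt X^h_t|$ in $L^p$ \emph{uniformly in time} rather than at a fixed time requires a maximal inequality for the oscillation of a time-changed Brownian motion over a grid of $\sim h^{-1}$ random cells whose lengths are only controlled in $L^p$, not almost surely. The delicate point is that one cannot naively take a supremum over $\lfloor T/h\rfloor$ terms inside an $L^p$-norm without paying a polynomial-in-$h^{-1}$ price; the right device is to bound the cell lengths $\tau^h_{k+1}-\tau^h_k$ and the terminal gap in a sufficiently high $L^q$-norm (which Section~\ref{sec:embedding} provides, for all $q$), then combine Chebyshev with the Brownian modulus of continuity so that the union bound costs only a power of $\log(1/h)$, absorbed into $h^{-\eps}$. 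Getting the bookkeeping of exponents right here — balancing the $L^q$ exponent used for the union bound against the loss in the convergence rate, while keeping the constant of the form $C(p,\eps,T)(1+k_2|y|)$ — is where the real work lies; the rest is an assembly of the already-established pieces.
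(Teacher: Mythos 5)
Your plan is essentially the paper's own proof: the coupling is exactly the linearly interpolated embedded process $Y^h$ of \eqref{eq:07082018a2} (so that \eqref{eq:24022019b2} holds), the theorem is reduced to the $L^p$ bound on $\sup_{t\in[0,T]}|Y^h_t-Y_t|$ (Theorem~\ref{thm:rate_path}), and that bound is obtained from the same three ingredients you list: the uniform stopping-time estimate of Proposition~\ref{cor:26112018a1}, the interpolation error $\max_k a_h(Y_{\tau^h_k})\lesssim\sqrt h\,(1+k_2|Y_{\tau^h_k}|)$ via Proposition~\ref{prop:cond_d_lin_growth}, and a path-regularity estimate for $Y$ coming from Condition~(C) through the time-changed Brownian motion, with the $\eps$-loss absorbing exactly the kind of slack you describe. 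Two cosmetic differences: the paper quantifies the modulus of continuity by applying Kolmogorov--Chentsov to $Y$ on $[0,T+1]$ (after discarding the event $\{\tau^h_{\lfloor T/h\rfloor}\ge T+1\}$, whose probability is small by Markov's inequality) and then uses H\"older's inequality, rather than L\'evy's modulus for $W$ plus Chebyshev/union bounds; and note that your reduction should be phrased in terms of $\sup_{t\le T}|t-\tau^h_{\lfloor t/h\rfloor}|\le\max_{k\le\lfloor T/h\rfloor}|\tau^h_k-kh|+h$ --- the terminal gap $|\tau^h_{\lfloor T/h\rfloor}-T|$ together with $\max_k(\tau^h_{k+1}-\tau^h_k)$ does not dominate the intermediate discrepancies --- but this is precisely the quantity Proposition~\ref{cor:26112018a1} controls, so the argument goes through as you intend.
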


The proof of Theorem~\ref{thm:wasserstein_path} is provided in Section~\ref{sec:rate_path}.

\medskip
In practice one is often interested in approximating
$E_y[f(Y_T)]$ or $E_y[F(Y_.)]$
for functions $f\colon I\to\bbR$
and path functionals $F\colon C([0,T],I)\to\bbR$.
The next result presents the respective rates.

\begin{theo}\label{cor:26022019a1}
Suppose that Condition~(C) is satisfied and that there exists $\lambda\in (0,\infty)$ such that Condition~(A$\lambda$) holds.
Let $T\in (0,\infty)$ and let $F \colon C([0,T],I) \to \R$ be a locally Lipschitz continuous path functional
with polynomially growing Lipschitz constant,
i.e., there exist $L,\alpha\in [0,\infty)$ such that
for all $x_1,x_2 \in C([0,T],I)$ it holds
\begin{equation}\label{eq:LipConstPol}
|F(x_1)-F(x_2)|\le L\left\{1+(\|x_1\|_C\vee\|x_2\|_C)^\alpha\right\}\|x_1-x_2\|_C.
\end{equation}
Then for every $\eps\in(0,\min\{\frac{1}{4},\frac\lambda 2\})$
there exist a constant $C(\alpha,\eps,T)\in[0,\infty)$ such that
for all $h\in (0,\ol h)$ and $y\in I^\circ$ it holds
\begin{equation}\label{eq:rate_path_locLip}
\left|E\left[F(X^{h,y}_{t};\,t\in[0,T])\right]
-E_y\left[F(Y_t;\,t\in[0,T])\right]\right|\
\le L C(\alpha,\eps,T) (1+|y|^\alpha) (1+k_2 |y|)
h^{\min\{\frac{1}{4},\frac{\lambda}{2}\}-\eps},
\end{equation}
where we use the convention $0^0:=1$
(such an expression can appear in the case $\alpha=0$).
\end{theo}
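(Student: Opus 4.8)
The plan is to derive this estimate from the path Wasserstein bound of Theorem~\ref{thm:wasserstein_path} together with the moment bounds of Section~\ref{sec:fin_mom}, using H\"older's inequality to cope with the polynomially growing Lipschitz constant in~\eqref{eq:LipConstPol}. Fix $\eps\in(0,\min\{\frac{1}{4},\frac{\lambda}{2}\})$ and set $p=\alpha+2$ and $q=\frac{p}{p-1}$, so that $\frac1p+\frac1q=1$ and $\alpha q<\infty$. I would first record that $F$ is continuous (being locally Lipschitz), hence Borel measurable, and that~\eqref{eq:LipConstPol} applied with a fixed reference path yields a growth bound of the form $|F(x)|\le c_{F,T}(1+\|x\|_C^{\alpha+1})$; combined with the finiteness of all path moments of $Y$ and of $X^{h,y}$ (see~\eqref{eq:24022019d1} in Theorem~\ref{thm:wasserstein_path}, and Section~\ref{sec:fin_mom} under Condition~(C)), this shows $F(Y)\in L^1(P_y)$ and $F(X^{h,y})\in L^1(P)$, so both expectations in~\eqref{eq:rate_path_locLip} are well defined.

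Next I would pick a coupling $(\xi,\zeta)$ of $P\circ(X^{h,y})^{-1}$ and $P_y\circ(Y)^{-1}$ on $C([0,T],I^2)$ with $\big\|\,\|\xi-\zeta\|_C\,\big\|_{L^p}\le 2\,\mathcal W_p\big(P\circ(X^{h,y})^{-1},P_y\circ(Y)^{-1}\big)$ (such a coupling exists by the definition of $\mathcal W_p$; in fact the infimum is attained since $C([0,T],I)$ with the sup norm is Polish). Since $\xi\sim P\circ(X^{h,y})^{-1}$ and $\zeta\sim P_y\circ(Y)^{-1}$, the left-hand side of~\eqref{eq:rate_path_locLip} equals $|E[F(\xi)]-E[F(\zeta)]|$, and by~\eqref{eq:LipConstPol}, H\"older's inequality, and $(\|\xi\|_C\vee\|\zeta\|_C)^\alpha=\max(\|\xi\|_C^\alpha,\|\zeta\|_C^\alpha)\le\|\xi\|_C^\alpha+\|\zeta\|_C^\alpha$ I would estimate
\begin{align*}
\big|E[F(\xi)]-E[F(\zeta)]\big|
&\le L\,E\Big[\big(1+(\|\xi\|_C\vee\|\zeta\|_C)^\alpha\big)\,\|\xi-\zeta\|_C\Big]\\
&\le L\,\Big(1+\big\|\,\|\xi\|_C\,\big\|_{L^{\alpha q}}^\alpha+\big\|\,\|\zeta\|_C\,\big\|_{L^{\alpha q}}^\alpha\Big)\,\big\|\,\|\xi-\zeta\|_C\,\big\|_{L^p}.
\end{align*}
The first factor depends only on the marginal laws of $\xi$ and $\zeta$, hence it is independent of the chosen coupling; by the moment bounds of Section~\ref{sec:fin_mom}, which are uniform in $h\in(0,\ol h)$, it is bounded by $C(\alpha,\eps,T)(1+|y|^\alpha)$ (recall that $q$ is a function of $\alpha$). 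For the second factor I would apply Theorem~\ref{thm:wasserstein_path} with this $p$ and with $\eps$, obtaining $\big\|\,\|\xi-\zeta\|_C\,\big\|_{L^p}\le 2\,C(p,\eps,T)(1+k_2|y|)\,h^{\min\{\frac{1}{4},\frac{\lambda}{2}\}-\eps}$. Multiplying the two bounds and absorbing all constants ($p$ being a function of $\alpha$) gives~\eqref{eq:rate_path_locLip}.

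Beyond invoking Theorem~\ref{thm:wasserstein_path} and the moment estimates of Section~\ref{sec:fin_mom}, the argument contains no genuinely hard step; the points that require care are the integrability of $F(Y)$ and $F(X^{h,y})$, the observation that in the H\"older splitting the polynomial-moment factor does not depend on the coupling (so it may be bounded uniformly over all couplings while the remaining factor is made nearly optimal for $\mathcal W_p$), and keeping track of the precise dependence on the starting point $y$ — in particular that the factor $(1+k_2|y|)$ originates from Theorem~\ref{thm:wasserstein_path} while the factor $(1+|y|^\alpha)$ comes from the moment bounds, exactly matching~\eqref{eq:rate_path_locLip}.
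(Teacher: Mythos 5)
Your proposal is correct and follows essentially the same route as the paper: pick a (near-)optimal coupling of the path laws, apply the Lipschitz bound \eqref{eq:LipConstPol}, split off the polynomial factor by H\"older, bound that factor by the uniform-in-$h$ moment estimates of Section~\ref{sec:fin_mom} (giving $1+|y|^\alpha$), and bound the coupling distance by Theorem~\ref{thm:wasserstein_path} (giving $(1+k_2|y|)h^{\min\{1/4,\lambda/2\}-\eps}$). The only differences are cosmetic: the paper takes the $\mathcal W_2$-optimal coupling (via Villani, Theorem~4.1) and uses Cauchy--Schwarz with a Jensen step for the $2\alpha$-moment (handling $\alpha=0$ and small $\alpha$ via $\beta=\max\{2\alpha,1\}$, a detail you should also make explicit when $\alpha q<1$), whereas you use a factor-two near-optimal coupling for $\mathcal W_{\alpha+2}$ and the conjugate-exponent H\"older inequality.
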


\begin{remark}
Note that both expectations in~\eqref{eq:rate_path_locLip}
are well-defined due to~\eqref{eq:LipConstPol}
(which implies polynomial growth of $F$)
and existence of all polynomial moments
(see~\eqref{eq:24022019d1}). Furthermore, 
we stress that
the special case
of a globally Lipschitz path functional $F$
is explicitly included in~\eqref{eq:LipConstPol}
by the possibility $\alpha=0$
and results in a better dependence on $y$
on the right-hand side of~\eqref{eq:rate_path_locLip}.
\end{remark}

Theorem~\ref{cor:26022019a1} is essentially deduced from
Theorem~\ref{thm:wasserstein_path},
but we also need several other technical results.
Therefore, we postpone the proof till Section~\ref{sec:rate_path}.

In the same vein, we get the rate $\min\{\frac14,\frac\lambda2\}$
when approximating $E_y[f(Y_T)]$
for a locally Lipschitz continuous
function $f\colon I\to\bbR$
with polynomially growing Lipschitz constant.
A precise formulation is similar to that of
Theorem~\ref{cor:26022019a1}.
In the proof one needs to apply Theorem~\ref{thm:wasserstein_term}
in place of Theorem~\ref{thm:wasserstein_path}.

\medskip
As we see in all previous statements,
the number $\lambda\in(0,\infty)$ in Condition~(A$\lambda$)
is essential for the convergence rate.
It is, therefore, reasonable to expect that the convergence in
every $p$-th Wasserstein distance (not necessarily with a rate)
holds only under Condition~(C) and the condition that the left-hand side
in~\eqref{eq:condalambda} is $o(h)$, as $h\to0$.
This is indeed true but does not follow from the previous results.
We formulate this as a separate theorem,
which is also proved in Section~\ref{sec:rate_path}.

\begin{theo}\label{th:15042020a1}
Assume that Condition~(C) holds and that
\begin{equation}\label{eq:15042020a1}
\sup_{y\in I_h } 
\left|
\frac{1}{2}\int_{(y-a_h(y),y+a_h(y))} (a_h(y)-|u-y|)\,m(du)
-h
\right|
\in o(h),\quad h\to0.
\end{equation}
Let $T\in (0,\infty)$, $y\in I^\circ$ and $p\in[1,\infty)$.
Then we have~\eqref{eq:24022019d1} and, moreover,
\begin{equation}\label{eq:15042020a2}
\mathcal W_p(P\circ (X^{h,y})^{-1}, P_y \circ (Y)^{-1})\to0,\quad h\to0,
\end{equation}
or, equivalently, for all continuous path functionals
$F\colon C([0,T],I)\to\bbR$ satisfying
$$
|F(x)|\le L(1+\|x\|_C^p),\quad x\in C([0,T],I)
$$
($L\in[0,\infty)$ is an arbitrary constant), it holds
\begin{equation}\label{eq:15042020a3}
E\left[F(X^{h,y}_{t};\,t\in[0,T])\right]
\to
E_y\left[F(Y_t;\,t\in[0,T])\right],
\quad h\to 0.
\end{equation}
\end{theo}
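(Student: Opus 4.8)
The plan is to reduce the qualitative statement to the quantitative machinery already developed, exploiting the fact that \eqref{eq:15042020a1} allows us, for each fixed target rate, to restrict attention to a sub-family of step sizes on which a Condition~(A$\lambda$)-type bound holds. More precisely, fix $T$, $y$, $p$ as in the statement. First I would record the embedding set-up from the introduction: for each $h\in(0,\ol h)$ one has the stopping times $\tau^h_k$ with $(Y_{\tau^h_k})_{k\in\bbN_0}\overset{d}{=}(X^h_{kh})_{k\in\bbN_0}$, so that $(Y_{\tau^h_{\lfloor\cdot/h\rfloor}})$ furnishes a coupling realizing an upper bound on the path Wasserstein distance between $P\circ(X^{h,y})^{-1}$ and $P_y\circ(Y)^{-1}$ (after also interpolating on the $Y$ side, exactly as in the proof of Theorem~\ref{thm:wasserstein_path}). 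The key point is that the construction of this coupling, and the passage from "$\tau^h_{\lfloor T/h\rfloor}\to T$ in $L^p$ and the time-change modulus estimate'' to "Wasserstein distance small'', uses Condition~(A$\lambda$) only through the one-step temporal error bound; no uniformity of $\lambda$ over all $h$ is actually needed for a single limit statement.

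The main step is therefore the following observation. By \eqref{eq:15042020a1}, for every $\delta\in(0,1)$ there is $h_\delta\in(0,\ol h)$ such that the supremum on the left-hand side of \eqref{eq:15042020a1} is $\le \delta h$ for all $h<h_\delta$; in particular \eqref{eq:gamma_cond} holds on $(0,h_\delta)$ with $\gamma=\delta$, so all the auxiliary results of Sections~\ref{sec:prop_appr} and~\ref{sec:fin_mom} — notably the uniform-in-$h$ moment bounds for $Y$ and for $X^h$, giving \eqref{eq:24022019d1} — apply on this range. Moreover, on $(0,h_\delta)$ the left-hand side of \eqref{eq:condalambda} is bounded by $\delta h = \delta h\cdot h^{0}$, which one can write as $h^{1+\lambda(h)}$ with $\lambda(h)\to 0$; but cleaner is to feed the bound "$\le \delta h$" directly into the $L^p$-estimate for $|\tau^h_{\lfloor T/h\rfloor}-T|$ from Section~\ref{sec:embedding}: tracking constants there shows $\|\tau^h_{\lfloor T/h\rfloor}-T\|_{L^p}\le C(p,T)\,\delta^{\min\{1/2,1\}}+ (\text{terms}\to 0)$, or more simply that $\limsup_{h\to0}\|\tau^h_{\lfloor T/h\rfloor}-T\|_{L^p}\le C(p,T)\delta$ for every $\delta$, hence $\tau^h_{\lfloor T/h\rfloor}\to T$ in $L^p$. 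Then the time-change estimate (Condition~(C) plus the representation of $Y$ as time-changed Brownian motion, as in Section~\ref{sec:rate}) gives $\|Y_{\tau^h_{\lfloor T/h\rfloor}}-Y_T\|_{L^p}\to0$, and, upgrading to the path version exactly as in Section~\ref{sec:rate_path}, $\big\|\sup_{t\le T}|Y_{\tau^h_{\lfloor t/h\rfloor}}-Y_t|\big\|_{L^p}\to0$. Via the coupling this yields \eqref{eq:15042020a2}.

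Finally I would record the equivalence with \eqref{eq:15042020a3}: convergence in $\mathcal W_p$ on $C([0,T],I)$ is equivalent to weak convergence together with convergence of $p$-th moments of the sup norm (the standard characterization of Wasserstein convergence on a Polish space), which in turn is equivalent to $E[F(X^{h,y})]\to E_y[F(Y)]$ for all continuous $F$ with $|F(x)|\le L(1+\|x\|_C^p)$; the uniform integrability needed here is exactly the uniform-in-$h$ bound on $E[\|X^{h,y}\|_C^{p'}]$ for some $p'>p$ supplied by Section~\ref{sec:fin_mom}. I expect the only real obstacle to be bookkeeping: verifying that the constants in the $L^p$-estimates of Sections~\ref{sec:embedding}--\ref{sec:rate_path} depend on the scale factors only through the bound \eqref{eq:gamma_cond}/the one-step error and not through a fixed exponent $\lambda$, so that the "$\limsup\le C\delta$ for every $\delta$'' argument goes through; this is a matter of inspecting those proofs rather than a new difficulty.
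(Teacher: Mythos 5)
Your argument is correct in substance, but it follows a genuinely different route from the paper. The paper's proof is much shorter: it observes that \eqref{eq:15042020a1} is exactly Condition~(A) of \cite{aku2018cointossing}, so the functional limit theorem there already gives weak convergence $X^{h,y}\xrightarrow{w}Y$ on $C([0,T],I)$; Condition~(C) enters only through Theorem~\ref{prop:moment_bound_approx}, which yields $\sup_{h}E[\|X^{h,y}\|_C^{2p}]<\infty$ and hence uniform integrability of $\{F(X^{h,y})\}_h$ for any continuous $F$ with $|F(x)|\le L(1+\|x\|_C^p)$; this gives \eqref{eq:15042020a3}, and the equivalence with \eqref{eq:15042020a2} is quoted from Theorem~6.9 in \cite{Villani}. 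You instead rerun the quantitative embedding machinery of Sections \ref{sec:embedding}--\ref{sec:rate_path} in a ``rate-free'' mode: under \eqref{eq:15042020a1} the one-step temporal error is $\le\delta h$ for small $h$, the martingale part in the proof of Proposition~\ref{cor:26112018a1} is $O(\sqrt h)$ and the predictable part is $\le T\delta$, so $\|\tau^h_{\lfloor T/h\rfloor}-T\|_{L^p}\to0$; then the time-change estimate and the path argument (with any fixed H\"older exponent $<\tfrac12$ in the Kolmogorov--Chentsov step, since there is no $\lambda$ to tie it to) give $\|\sup_{t\le T}|Y^h_t-Y_t|\|_{L^p}\to0$, whence \eqref{eq:15042020a2} via the coupling, and \eqref{eq:15042020a3} by the same Wasserstein characterization. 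Your claim that the constants in those sections depend on the scale factors only through the one-step error bound \eqref{eq:gamma_cond} does check out upon inspection, so the plan is sound; what it buys is a more self-contained derivation (it uses the embedding and Propositions~5.2--5.3 of \cite{aku2018cointossing} but not its weak convergence theorem as a black box) and a slightly stronger intermediate conclusion ($L^p$ convergence of the coupled paths), at the cost of the bookkeeping you yourself flag, which the paper's uniform-integrability argument avoids entirely.
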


\begin{remark}[Applicability of the main results]
(i) [Theoretical view]
The assumptions in our main results,
Theorems \ref{thm:wasserstein_term}
and~\ref{thm:wasserstein_path},
are Condition~(C) and Condition~(A$\lambda$).
Condition~(C) is an assumption
on the general diffusion $Y$ we want to approximate,
and it cannot be dropped
(we present examples in Section~\ref{sec:CC}).
Condition~(A$\lambda$) is an assumption
on the approximation scheme.
The main results thus provide convergence rates
of the Markov chain approximation schemes
\eqref{eq:def_X}--\eqref{eq:13112017a1}
defined by scale factors
satisfying Condition~(A$\lambda$),
with some $\lambda\in(0,\infty)$,
to general diffusions satisfying Condition~(C).

Given a task to approximate some general diffusion $Y$
we usually have a choice of an approximation scheme,
and we always can choose a scheme
that gives the best rates in
Theorems \ref{thm:wasserstein_term}
and~\ref{thm:wasserstein_path}
(namely, rate $\frac14$ in Theorem~\ref{thm:wasserstein_term}
and rate $\frac14-$ in Theorem~\ref{thm:wasserstein_path})
whenever $Y$ satisfies Condition~(C).
Indeed, we can take the EMCEL approximation scheme (see Example~\ref{ex:25022019a1}),
which is well-defined for every general diffusion $Y$
and satisfies Condition~(A$\lambda$)
for all $\lambda\in(0,\infty)$.

\smallskip
(ii) [Towards a practical implementation]
In general, Equation~\eqref{eq:22022019a3}
defining the EMCEL scale factors
can rarely be solved in a closed form.
Therefore, in practice we often need to
use scale factors $(a_h)_{h\in(0,\ol h)}$
that are approximations of the EMCEL scale factors
$(\wh a_h)_{h\in(0,\ol h)}$,
and for this latter case the main results tell us
with which precision to solve~\eqref{eq:22022019a3}
in order still to achieve the best rates.
Namely, Condition~(A$\lambda$) with $\lambda=\frac12$
still guarantees rate $\frac14$ in Theorem~\ref{thm:wasserstein_term}
and rate $\frac14-$ in
Theorem~\ref{thm:wasserstein_path}.
As an informal summary, we get the following recipe.
Solving~\eqref{eq:22022019a3}
with a precision of order $\cO(h^{3/2})$
provides the best rates in the main results.
A further reduction of the precision
in solving~\eqref{eq:22022019a3} entails smaller rates.

Also recall that our results are presented for a general diffusion $Y$ in natural scale.
If $Y$ is not in natural scale, we should apply our results to $s(Y)$,
where $s$ is the scale function of $Y$.
For instance, if $Y$ is driven by an SDE with drift,
$s$ is represented by a certain integral that involves the SDE coefficients.
This integral can rarely be computed in a closed form,
but the procedure in this case is to write down
Equation~\eqref{eq:22022019a3}
for $s(Y)$ and solve it with the precision of order $\cO(h^{3/2})$
(in other words, there is no need to approximate $s$ in an intermediate step;
but approximating the solution to~\eqref{eq:22022019a3} is necessary
and can be, of course, a challenging problem).

On the other hand, our results should not be understood in the way that,
in all cases, we insist on solving~\eqref{eq:22022019a3}
with the precision of order $\cO(h^{3/2})$.
In ``nice'' situations (e.g., SDEs with ``sufficiently regular'' coefficients)
one may fall back on simpler schemes, e.g., the Euler one,
and check if its scale factors satisfy Condition~(A$\lambda$) with $\lambda=\frac12$.
We suggest to solve~\eqref{eq:22022019a3}
with the precision of order $\cO(h^{3/2})$ only in ``irregular'' cases,
which, however, can be an involved task.
To deal with this problem in general,
it is helpful to study properties of the EMCEL scheme,
which we do in \cite{aklu2020emcel}.
In particular, \cite{aklu2020emcel} contains a certain ODE
for the EMCEL scale factors $\wh a_h$, $h\in(0,\ol h)$,
which holds in full generality, i.e., for \emph{all} possible
speed measures~$m$.

Finally, it is worth mentioning that the EMCEL scheme is explicitly described
in a number of SDE examples in Section~5 of \cite{aku-jmaa}
and for sticky Brownian motion in Section~7 of \cite{aku2018cointossing}.
In addition, in Section~8 of \cite{aku2018cointossing}
a Brownian motion slowed down on the Cantor set
is successfully approximated in closed form
by means of approximate solutions to~\eqref{eq:22022019a3}
(the latter case is an example, where there is no chance to have an explicit solution to~\eqref{eq:22022019a3}, but we can still write
an explicit approximation scheme).
\end{remark}

\begin{remark}[Reflecting boundaries]\label{rem:reflection}
We described the approximation scheme
under the assumption that
if a boundary point is accessible, then it is absorbing.
The case of reflecting boundaries
can be reduced to this situation.
In more detail, we can include both instantaneously reflecting
and slowly reflecting (sticky) boundaries as follows.
If $Z$ is a general diffusion in natural scale with reflecting boundaries, then there exists a general diffusion $Y$ in natural scale
with inaccessible and/or absorbing
boundaries and a Lipschitz continuous function $f$ with Lipschitz constant $1$ such that $Z \stackrel{d}{=} f(Y)$ (see Section~6
of \cite{aku2018cointossing} for details; in the special case, where $[l,\infty)$ is the state space of $Z$ and $l\in\bbR$ a reflecting boundary,
we can find $Y$ being a certain general diffusion on $\R$
and take $f(y)=l+|y-l|$, $y\in \R$).
Let $X^h$, $h\in (0,\ol h)$,
be the processes of \eqref{eq:def_X}--\eqref{eq:13112017a1}
approximating $Y$.
To approximate $Z$ we define $Z^h=f(X^h)$, $h\in(0,\ol h)$.
As $f$ has Lipschitz constant~$1$,
it holds for all $h\in (0,\ol h)$ and $p\in [1,\infty)$ that 
$$
\mathcal W_p(P\circ (Z^{h})^{-1}, P_y \circ (Z)^{-1})\le \mathcal W_p(P\circ (X^{h})^{-1}, P_y \circ (Y)^{-1}),
$$
hence, the rates of
Theorems \ref{thm:wasserstein_term} and~\ref{thm:wasserstein_path}
are preserved also for the convergence
of $(Z^h)_{h\in (0,\ol h)}$ to $Z$
in every $p$-th Wasserstein distance.
\end{remark}

\section{Properties of the approximation schemes}\label{sec:prop_appr}
In this section we collect some useful properties
of approximation schemes
satisfying Condition~\eqref{eq:gamma_cond}.

\begin{lemma}\label{lem:sf_conv_zero}
Assume that \eqref{eq:gamma_cond} is satisfied.
Then for all $y\in I^\circ$ we have
\begin{equation}\label{eq:22022019a1}
\lim_{h\to0}a_h(y)=0.
\end{equation}
In particular, \eqref{eq:22022019a1} holds under Condition~(A$\lambda$)
with an arbitrary $\lambda\in(0,\infty)$.
\end{lemma}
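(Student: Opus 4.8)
The plan is to fix $y \in I^\circ$ and argue by contradiction. Suppose $a_h(y)$ does not tend to $0$ as $h \to 0$. Then there exist $\delta > 0$ and a sequence $h_n \searrow 0$ with $a_{h_n}(y) \ge \delta$ for all $n$. The first thing I would do is observe that, by definition of $I_h$ in \eqref{eq:22022019a2}, the point $y$ lies in $I_{h}$ for all sufficiently small $h$: indeed either $y\pm a_h(y)\in I^\circ$ (so $y$ is in the second set in \eqref{eq:22022019a2}), or not, in which case one of the exit points hits an accessible boundary and $y$ must lie in $(l_h, r_h)$ by the characterization via \eqref{eq:22022019a3}--\eqref{eq:22022019a4} of the EMCEL-type bracketing; in any case, for $h$ small, $y$ may be assumed to be an admissible test point for the supremum in \eqref{eq:gamma_cond}. (If this needs care, one can instead restrict to $h$ so small that $l_h < y < r_h$, which holds eventually since $l_h \searrow l$, $r_h \nearrow r$.)

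Next I would exploit \eqref{eq:gamma_cond} at the point $y$: it gives
\[
\frac{1}{2}\int_{(y-a_h(y),\,y+a_h(y))} \bigl(a_h(y) - |u - y|\bigr)\, m(du) \le (1+\gamma) h .
\]
Along the subsequence $h_n$, the left-hand side is bounded below by
\[
\frac{1}{2}\int_{(y-\delta',\,y+\delta')} \bigl(\delta' - |u - y|\bigr)\, m(du)
\]
for any fixed $\delta' \in (0,\delta)$ with $[y-\delta', y+\delta'] \subset I^\circ$, because the integrand $a_{h_n}(y) - |u-y|$ dominates $\delta' - |u-y|$ on $(y-\delta', y+\delta')$ (using $a_{h_n}(y) \ge \delta > \delta'$). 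This lower bound is a strictly positive constant: the function $u \mapsto \delta' - |u-y|$ is continuous and strictly positive on the open interval $(y-\delta', y+\delta')$, and $m$ assigns positive finite mass to $[y - \delta'/2, y + \delta'/2] \subset I^\circ$ by \eqref{eq:06072018a1}, so the integral is at least $(\delta'/2)\, m([y-\delta'/2, y+\delta'/2]) > 0$. Letting $n \to \infty$ forces $0 < \text{const} \le (1+\gamma)\cdot 0 = 0$, a contradiction.

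The only place requiring genuine attention is the bookkeeping ensuring $y$ is a legitimate point over which the supremum in \eqref{eq:gamma_cond} is taken — i.e., that $y \in I_h$ for all small $h$ — and this is precisely the content of the remarks following \eqref{eq:22022019a2} (that $l_h \searrow l$ and $r_h \nearrow r$ as $h \searrow 0$), so it causes no real difficulty. The final sentence of the lemma is immediate, since Condition~(A$\lambda$) implies \eqref{eq:gamma_cond} by the standing convention made just after \eqref{eq:condalambda}.
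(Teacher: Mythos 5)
Your proposal is correct and follows essentially the same route as the paper's proof: both argue by contradiction along a sequence $h_n\to0$ with $a_{h_n}(y)$ bounded away from zero, note that $y\in I_h$ for all sufficiently small $h$ (via $l_h\to l$, $r_h\to r$) so that \eqref{eq:gamma_cond} forces $\int_{(y-a_h(y),y+a_h(y))}(a_h(y)-|u-y|)\,m(du)\to0$, and then derive a contradiction from the strictly positive lower bound $\int_{(y-\eps,y+\eps)}(\eps-|u-y|)\,m(du)>0$ guaranteed by \eqref{eq:06072018a1}. (Your first attempt at justifying $y\in I_h$ via an EMCEL-type bracketing is not valid for general scale factors, but your fallback argument using $l_h\searrow l$, $r_h\nearrow r$ is exactly what the paper uses, so the proof stands.)
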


\begin{proof}
We first notice that any $y\in I^\circ$ belongs to $I_h$
whenever $h\in(0,\ol h)$ is sufficiently small.
Hence, \eqref{eq:gamma_cond} implies that
for all $y\in I^\circ$
$$
\lim_{h\to0}
\int_{(y-a_h(y),y+a_h(y))}
(a_h(y)-|u-y|)\,m(du)
=0.
$$
Let $y\in I^\circ$ and assume that 
$\limsup_{h\to 0}a_h(y)>0$.
Then there exists $\eps>0$ and a sequence
$(h_n)_{n\in \N}\subset (0,\ol h)$
such that $\lim_{n\to \infty} h_n=0$ and $a_{h_n}(y)\ge \eps$ for all $n\in \N$.
Then it holds for all $n\in \N$ that
$$
\int_{(y-a_{h_n}(y),y+a_{h_n}(y))} (a_{h_n}(y)-|u-y|)\,m(du)
\ge 
\int_{(y-\eps,y+\eps)} (\eps-|u-y|)\,m(du)>0.
$$
The obtained contradiction completes the proof.
\end{proof}

We fix an arbitrary $y_0\in I^\circ$.
In what follows we make a convention that
if \eqref{eq:gamma_cond} is satisfied,
then $\ol h$ is chosen small enough to guarantee
\begin{equation}\label{eq:02032019a1}
\sup_{h\in(0,\ol h)}a_h(y_0)<\infty.
\end{equation}
Indeed, due to Lemma~\ref{lem:sf_conv_zero},
under~\eqref{eq:gamma_cond} we can always
achieve~\eqref{eq:02032019a1}
by reducing $\ol h$ if necessary.

\begin{propo}\label{prop:approx_comp}
Assume that \eqref{eq:gamma_cond} is satisfied.
Then it holds for all $h\in (0,\ol h)$ and $y_1,y_2 \in I_h$ with $y_1<y_2$ that
\begin{equation}\label{eq:approx_comp}
y_1-y_2-\frac{2\gamma a_h(y_2)}{1-\gamma}	\le a_h(y_2)-a_h(y_1)\le y_2-y_1 +
\frac{2\gamma a_h(y_1)}{1-\gamma}.
\end{equation}
Moreover, there exists $C\in [0,\infty)$ such that for all $h\in (0,\ol h)$ and $y \in I^\circ$ we have
\begin{equation}\label{eq:lin_growth_sf_woh}
a_h(y)\le C+|y|.
\end{equation}
In particular, both statements hold under Condition~(A$\lambda$)
with an arbitrary $\lambda\in(0,\infty)$.
\end{propo}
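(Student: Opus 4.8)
Throughout write $g(y,a):=\frac12\int_{(y-a,y+a)}(a-|u-y|)\,m(du)$ for $y\in I^\circ$ and $a\in(0,\infty)$ with $y\pm a\in I$, so that \eqref{eq:gamma_cond} reads $(1-\gamma)h\le g(y,a_h(y))\le(1+\gamma)h$ for every $y\in I_h$; in particular $g(y,a_h(y))>0$, which forces $a_h(y)>0$ for all $y\in I_h$.

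To prove \eqref{eq:approx_comp} I would fix $h\in(0,\ol h)$ and $y_1<y_2$ in $I_h$, abbreviate $a_i:=a_h(y_i)$ and $I_i:=(y_i-a_i,y_i+a_i)$, and establish the single inequality
\[
a_2\le(y_2-y_1)+\frac{1+\gamma}{1-\gamma}\,a_1;
\]
the two estimates in \eqref{eq:approx_comp} are then just algebraic rearrangements of this inequality and of its analogue with the indices $1$ and $2$ interchanged (keeping $y_1<y_2$), the latter obtained by a verbatim symmetric argument. I would argue by contradiction: suppose $a_2>(y_2-y_1)+\tfrac{1+\gamma}{1-\gamma}a_1$. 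Since $\tfrac{1+\gamma}{1-\gamma}\ge1$ this gives $a_2-a_1>y_2-y_1>0$, and comparing the endpoints of $I_1$ and $I_2$ shows $I_1\subseteq I_2$. Put $c:=\frac{a_2-(y_2-y_1)}{a_1}$, which is well defined as $a_1>0$ and satisfies $c>\tfrac{1+\gamma}{1-\gamma}\ge1$. For $u\in I_1$ the triangle inequality $|u-y_2|\le|u-y_1|+(y_2-y_1)$ yields
\[
a_2-|u-y_2|\ \ge\ c\,a_1-|u-y_1|\ \ge\ c\bigl(a_1-|u-y_1|\bigr)\ \ge\ 0,
\]
where the second inequality uses $c\ge1$ together with $|u-y_1|\le a_1$. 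Since the integrand $u\mapsto a_2-|u-y_2|$ is nonnegative on $I_2\supseteq I_1$, integrating the displayed bound against $m$ over $I_1$ gives
\[
g(y_2,a_2)\ \ge\ \tfrac12\!\int_{I_1}\!\bigl(a_2-|u-y_2|\bigr)m(du)\ \ge\ c\cdot\tfrac12\!\int_{I_1}\!\bigl(a_1-|u-y_1|\bigr)m(du)\ =\ c\,g(y_1,a_1).
\]
Combining this with \eqref{eq:gamma_cond} yields $(1+\gamma)h\ge g(y_2,a_2)\ge c\,g(y_1,a_1)\ge c(1-\gamma)h>(1+\gamma)h$, a contradiction.

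For \eqref{eq:lin_growth_sf_woh} I would use the reference point $y_0\in I^\circ$ from the convention \eqref{eq:02032019a1}. By Lemma~\ref{lem:sf_conv_zero} and \eqref{eq:22022019a2} we have $y_0\in I_h$ once $h$ is small, so we may and do assume (shrinking $\ol h$ if necessary) that $y_0\in I_h$ for all $h\in(0,\ol h)$, and set $M:=\sup_{h\in(0,\ol h)}a_h(y_0)<\infty$. Fix $h\in(0,\ol h)$ and $y\in I^\circ$. If $y\in I_h$, apply \eqref{eq:approx_comp} to the pair $\{y,y_0\}$ to get $a_h(y)\le|y-y_0|+\tfrac{1+\gamma}{1-\gamma}a_h(y_0)\le|y|+|y_0|+\tfrac{1+\gamma}{1-\gamma}M$. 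If $y\notin I_h$, then $y\notin(l_h,r_h)$ by \eqref{eq:22022019a2}, hence $y\le l_h$ or $y\ge r_h$; in the first case $l<y\le l_h$ forces $l>-\infty$ (recall $l_h=-\infty$ whenever $l=-\infty$), and the constraint $y-a_h(y)\ge l$ gives $a_h(y)\le y-l\le|y|+|l|$, while in the second case $a_h(y)\le r-y\le|y|+|r|$ analogously. Thus \eqref{eq:lin_growth_sf_woh} holds with $C:=\max\bigl\{|y_0|+\tfrac{1+\gamma}{1-\gamma}M,\ |l|\,\mathbf{1}_{\{l>-\infty\}},\ |r|\,\mathbf{1}_{\{r<\infty\}}\bigr\}$.

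The crux is the pointwise integrand comparison in the second paragraph, and in particular spotting the right scaling factor $c=\frac{a_2-(y_2-y_1)}{a_1}$: the geometric content is that replacing the centred interval $I_1$ by the larger, recentred interval $I_2$ can increase the functional $g$ by at most the factor $c$, after which the two-sided bound \eqref{eq:gamma_cond} pins $c$ down. The remaining points — the case distinction $y\in I_h$ versus $y\notin I_h$ and the harmless reduction of $\ol h$ ensuring $y_0\in I_h$ — are routine bookkeeping.
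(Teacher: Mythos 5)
Your proof is correct and is essentially the paper's argument in light disguise: your scaling factor $c=\frac{a_h(y_2)-(y_2-y_1)}{a_h(y_1)}$ is exactly the factor $1+\frac{a_h(y_2)-y_2-a_h(y_1)+y_1}{a_h(y_1)}$ appearing in the paper's computation, and your pointwise integrand comparison, the inclusion $(y_1-a_h(y_1),y_1+a_h(y_1))\subseteq(y_2-a_h(y_2),y_2+a_h(y_2))$, the nonnegativity of the integrand, and the two-sided use of \eqref{eq:gamma_cond} reproduce the paper's chain of integral estimates, merely organized as a contradiction rather than a direct bound. The linear-growth part likewise matches the paper (reference point $y_0$ via \eqref{eq:approx_comp} on $I_h$, and the constraint $y\pm a_h(y)\in I$ off $I_h$), with your explicit reduction ensuring $y_0\in I_h$ being a harmless extra precaution.
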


\begin{proof}
For the first part of the proof fix $h\in (0,\ol h)$ and $y_1,y_2 \in I_h$ with $y_1<y_2$.
We only prove the second inequality in \eqref{eq:approx_comp}. The first one follows from similar considerations. Assume that
$a_h(y_2)-a_h(y_1)\ge y_2-y_1$ (else there is nothing to show). Note that \eqref{eq:gamma_cond} ensures that $a_h(y_1)>0$. It holds that
\begin{equation}
\begin{split}
&\frac{1}{2}\int_{(y_1-a_h(y_1),y_1+a_h(y_1))} \left(a_h(y_1)-|u-y_1|\right)\,m(du)
\left(
1+\frac{a_h(y_2)-y_2-a_h(y_1)+y_1}{a_h(y_1)}
\right)\\
&=
\frac{1}{2}\int_{(y_1-a_h(y_1),y_1+a_h(y_1))} \left(a_h(y_1)-|u-y_1|\right)\,m(du)\\
&\qquad 
+\frac{a_h(y_2)-y_2-a_h(y_1)+y_1}{2a_h(y_1)}
\int_{(y_1-a_h(y_1),y_1+a_h(y_1))} \left(a_h(y_1)-|u-y_1|\right)\,m(du)
\\
&\le \frac{1}{2}\int_{(y_1-a_h(y_1),y_1+a_h(y_1))} \left(a_h(y_1)-|u-y_1|\right)\,m(du)\\
&\qquad 
+\frac{1}{2}(a_h(y_2)-y_2-a_h(y_1)+y_1)\, m((y_1-a_h(y_1),y_1+a_h(y_1)))\\
&=\frac{1}{2}\int_{(y_1-a_h(y_1),y_1+a_h(y_1))} \left(a_h(y_2)-|u-y_1|-y_2+y_1\right)\,m(du).
\end{split}
\end{equation}
Combining this with the fact that $\forall u\in\R \colon |u-y_2|\le |u-y_1|+|y_1-y_2|=|u-y_1|+y_2-y_1$ implies that
\begin{multline}\label{aux_comp_1}
\frac{1}{2}\int_{(y_1-a_h(y_1),y_1+a_h(y_1))} \left(a_h(y_1)-|u-y_1|\right)\,m(du)
\left(
1+\frac{a_h(y_2)-y_2-a_h(y_1)+y_1}{a_h(y_1)}
\right)\\
\le 
\frac{1}{2}
\int_{(y_1-a_h(y_1),y_1+a_h(y_1))} \left(a_h(y_2)-|u-y_2|\right)\,m(du).
\end{multline}
The assumption that $a_h(y_2)-a_h(y_1)\ge y_2-y_1$ ensures that 
$(y_1-a_h(y_1),y_1+a_h(y_1))\subseteq (y_2-a_h(y_2),y_2+a_h(y_2))$. Moreover it holds that $a_h(y_2)-|u-y_2|\ge 0$ for all $u\in (y_2-a_h(y_2),y_2+a_h(y_2))$. Combining this with \eqref{aux_comp_1} shows that
\begin{multline}
\frac{1}{2}\int_{(y_1-a_h(y_1),y_1+a_h(y_1))} \left(a_h(y_1)-|u-y_1|\right)\,m(du)
\left(
1+\frac{a_h(y_2)-y_2-a_h(y_1)+y_1}{a_h(y_1)}
\right)\\
\le 
\frac{1}{2}
\int_{(y_2-a_h(y_2),y_2+a_h(y_2))} \left(a_h(y_2)-|u-y_2|\right)\,m(du).
\end{multline}
It follows from \eqref{eq:gamma_cond} that
\begin{equation}
\frac{1}{2}
\int_{(y_2-a_h(y_2),y_2+a_h(y_2))} \left(a_h(y_2)-|u-y_2|\right)\,m(du)
\le (1+\gamma)h
\end{equation}
and that
\begin{equation}
\frac{1}{2}\int_{(y_1-a_h(y_1),y_1+a_h(y_1))} \left(a_h(y_1)-|u-y_1|\right)\,m(du)\ge (1-\gamma)h.
\end{equation}
Hence, it holds that
\begin{equation}
(1-\gamma)h\left(
1+\frac{a_h(y_2)-y_2-a_h(y_1)+y_1}{a_h(y_1)}
\right)\le  (1+\gamma)h
\end{equation}
This implies that
\begin{equation}
a_h(y_2)-a_h(y_1)\le y_2-y_1 +
\frac{2\gamma a_h(y_1)}{1-\gamma}.
\end{equation}
It remains to prove Inequality~\eqref{eq:lin_growth_sf_woh}.
Applying~\eqref{eq:02032019a1} together with
the second inequality in~\eqref{eq:approx_comp}
with $y_1=y_0$ and $y_2>y_0$
as well as the first inequality in~\eqref{eq:approx_comp}
with $y_2=y_0$ and $y_1<y_0$,
we infer that there exists $\wt C\in [0,\infty)$ such that,
for all pairs $(h,y)$ with $h\in (0,\ol h)$, $y\in I_h$,
it holds $a_h(y)\le \wt C+|y|$.
Furthermore, for all pairs $(h,y)$ with $h\in(0,\ol h)$,
$y\in I^\circ\setminus I_h$,
we have $y+a_h(y)=r$ or $y-a_h(y)=l$,
which means that $y\mapsto a_h(y)$ is affine with slope $\pm1$
and intercept $r$ or $-l$.
This completes the proof.
\end{proof}


Proposition~\ref{prop:approx_comp} provides the functional bound
$C+|y|$ (independent of~$h$)
for all functions $a_h$, $h\in(0,\ol h)$, satisfying~\eqref{eq:gamma_cond}.
For any fixed $y\in I^\circ$, we know from
Lemma~\ref{lem:sf_conv_zero}
that $a_h(y)\to0$ as $h\to0$ (again under~\eqref{eq:gamma_cond}).
A natural question is, therefore, to find a functional bound
for $a_h$ that depends on $h$ and vanishes as $h\to0$.
Under Condition~(C) in addition to~\eqref{eq:gamma_cond},
such a bound is suggested in the next result.

\begin{propo}\label{prop:cond_d_lin_growth}
Suppose that Condition~(C) is satisfied
and that \eqref{eq:gamma_cond} holds true.
Then there exists a constant $C\in (0,\infty)$ such that
for all $h\in (0,\ol h)$ and $y\in I^\circ$ it holds that
\begin{equation}\label{eq:lin_growth_sf}
a_h(y)\le C (1+k_2|y|)\sqrt{h}.
\end{equation}
In particular, \eqref{eq:lin_growth_sf} holds under Condition~(C)
and Condition~(A$\lambda$)
with an arbitrary $\lambda\in(0,\infty)$.
\end{propo}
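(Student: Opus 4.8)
The plan is to read the required estimate off Condition~(C) by bounding the ``expected--exit--time'' integral $\frac12\int_{(y-a,y+a)}(a-|u-y|)\,m(du)$ from below. The basic observation is that for every $y\in I^\circ$ one has $(y-a_h(y),y+a_h(y))\subseteq I^\circ$, because $y\pm a_h(y)\in I$; hence Condition~(C) is applicable on such an interval. Since $1+k_2u^2\le(1+k_2(|y|+a))^2$ for all $u\in(y-a,y+a)$ and $\int_{(y-a,y+a)}(a-|u-y|)\,du=a^2$, this gives, for every $y\in I^\circ$ and every $a\in(0,\infty)$ with $(y-a,y+a)\subseteq I^\circ$,
\begin{multline}\label{eq:cond_d_expexit_lb}
\frac12\int_{(y-a,y+a)}(a-|u-y|)\,m(du)\\
\ \ge\ \frac{1}{k_1}\int_{(y-a,y+a)}\frac{a-|u-y|}{1+k_2u^2}\,du\ \ge\ \frac{a^2}{k_1\,(1+k_2(|y|+a))^2}.
\end{multline}

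First I would deal with the points $y\in I_h$. For such $y$, Condition~\eqref{eq:gamma_cond} gives $\frac12\int_{(y-a_h(y),y+a_h(y))}(a_h(y)-|u-y|)\,m(du)\le(1+\gamma)h$, so \eqref{eq:cond_d_expexit_lb} with $a=a_h(y)$ yields $\frac{a_h(y)}{1+k_2(|y|+a_h(y))}\le\sqrt{k_1(1+\gamma)h}$. Solving this elementary inequality for $a_h(y)$ --- which is legitimate once $\ol h$ is shrunk so that $2k_2\sqrt{k_1(1+\gamma)\,\ol h}<1$ --- yields $a_h(y)\le 2\sqrt{k_1(1+\gamma)}\,(1+k_2|y|)\sqrt h$.

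Next I would handle the points $y\in I^\circ\setminus I_h$; recalling the definition~\eqref{eq:22022019a2} of $I_h$, these can occur only near a boundary point which is accessible, hence lies in $I\subseteq\R$ and in particular is finite. For such $y$ we have $y\pm a_h(y)\in I$ but not both in $I^\circ$; say $y-a_h(y)=l\in\R$, so that $a_h(y)=y-l$. From $y+a_h(y)\le r$ one gets $a_h(y)\le\tfrac{r-l}2$ (when $r<\infty$) and hence $y\le\tfrac{l+r}2$; combined with $\tfrac{l+r}2\le r_h$ (resp.\ $r_h=\infty$ when $r=\infty$) and $y\notin(l_h,r_h)$ --- and after shrinking $\ol h$ so that $l_h<\tfrac{l+r}2<r_h$ in the finite--boundary case --- this forces $y\le l_h$, whence $a_h(y)=y-l\le l_h-l$. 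It remains to estimate $l_h-l$: the same computation as in \eqref{eq:cond_d_expexit_lb}, carried out on $(l,l+2a)$ where $|u|\le|l|+2a$, shows that for $h$ small enough the number $a_*:=2\sqrt{k_1h}\,(1+k_2|l|)$ satisfies $a_*\le\tfrac{r-l}2$ and $\frac12\int_{(l,l+2a_*)}\bigl(a_*-|u-(l+a_*)|\bigr)\,m(du)\ge\frac{a_*^2}{k_1(1+k_2(|l|+2a_*))^2}\ge h$, so that $a_*$ belongs to the set in the definition~\eqref{eq:22022019b1} of $l_h$ and therefore $l_h-l\le a_*$. Symmetrically, in the case $y+a_h(y)=r\in\R$ one obtains $a_h(y)\le r-r_h\le 2\sqrt{k_1h}\,(1+k_2|r|)$. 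As $1+k_2|y|\ge1$, in both sub--cases $a_h(y)\le C'\sqrt h\le C'(1+k_2|y|)\sqrt h$ with $C':=2\sqrt{k_1}\,\max\{\,1+k_2|l|,\ 1+k_2|r|\,\}$, the term attached to an infinite endpoint being simply omitted (and if both $l,r$ are infinite then $l_h=l$, $r_h=r$, so $I_h=I^\circ$ and there is nothing to prove). Taking $C:=\max\{\,2\sqrt{k_1(1+\gamma)},\,C'\,\}$ completes the proof.

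I expect the only genuinely delicate point to be the analysis at an accessible boundary: one must argue that $y\in I^\circ\setminus I_h$ forces $y\le l_h$ (or $y\ge r_h$), and then transfer the Condition~(C) estimate \eqref{eq:cond_d_expexit_lb} into a bound on $l_h-l$ (resp.\ $r-r_h$) through the somewhat technical definitions~\eqref{eq:22022019b1}--\eqref{eq:22022019b2}, keeping track of the convention $\inf\emptyset=\tfrac{r-l}2$. The estimate for $y\in I_h$, by contrast, is immediate from \eqref{eq:gamma_cond} and \eqref{eq:cond_d_expexit_lb}.
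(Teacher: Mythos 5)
Your argument is correct and rests on the same central estimate as the paper's proof: Condition~(C) bounds the expected-exit-time integral from below by $a_h(y)^2/\bigl(k_1(1+k_2(|y|+a_h(y)))^2\bigr)$, while \eqref{eq:gamma_cond} bounds it from above by $(1+\gamma)h$ on $I_h$. You differ from the paper in two places. First, to eliminate the $a_h(y)$ appearing in the denominator the paper invokes the linear-growth bound $a_h(y)\le C+|y|$ of Proposition~\ref{prop:approx_comp}, which needs no smallness of $\ol h$; you instead use an absorption argument, which is fine but forces you to shrink $\ol h$ when $k_2=1$. Second, for $y\in I^\circ\setminus I_h$ the paper disposes of the boundary region in one line: by the definitions \eqref{eq:22022019b1}, \eqref{eq:22022019b2} and \eqref{eq:22022019a2}, the exit-time integral is at most $h$ for such $y$, so the same uniform estimate covers all of $I^\circ$; you instead show $y\le l_h$ (resp.\ $y\ge r_h$), bound $a_h(y)\le l_h-l$ (resp.\ $r-r_h$), and then control $l_h-l$ by exhibiting, via Condition~(C), the explicit member $a_*=2\sqrt{k_1h}\,(1+k_2|l|)$ of the set in \eqref{eq:22022019b1}. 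This is more laborious than the paper's route but sound, and it yields explicit bounds on $l_h-l$ and $r-r_h$ as a by-product. The only point to tidy up is the repeated shrinking of $\ol h$: the proposition asserts the estimate for all $h\in(0,\ol h)$ with $\ol h$ already fixed, so you should either adopt the paper's convention of choosing $\ol h$ small enough from the outset, or patch the leftover range $h\in[\ol h',\ol h)$ directly; the patch is one line, since your shrinkings are only needed when $k_2=1$ or when both $l$ and $r$ are finite, and in either case Proposition~\ref{prop:approx_comp} gives $a_h(y)\le \wt C+|y|$, which on that range is at most a constant multiple of $(1+k_2|y|)\sqrt h$ because $\sqrt h\ge\sqrt{\ol h'}$ there (and, when $k_2=0$, because $|y|$ is then bounded by $\max\{|l|,|r|\}$).
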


\begin{proof}
By~\eqref{eq:gamma_cond}, it holds
for $h\in (0,\ol h)$ and $y\in I_h$ that
\begin{equation}\label{eq:lin_growth_sf_aux}
\frac{1}{2}\int_{(y-a_h(y),y+a_h(y))} (a_h(y)-|u-y|)\,m(du)
\le(\gamma+1)h.
\end{equation}
It follows from \eqref{eq:22022019b1},
\eqref{eq:22022019b2}
and~\eqref{eq:22022019a2}
that for $h\in (0,\ol h)$ and $y\in I^\circ\setminus I_h$
$$
\frac{1}{2}\int_{(y-a_h(y),y+a_h(y))} (a_h(y)-|u-y|)\,m(du)
\le h.
$$
In particular, \eqref{eq:lin_growth_sf_aux} holds, in fact,
for all $h\in (0,\ol h)$ and $y\in I^\circ$.
Now Condition~(C) implies for all $h\in (0,\ol h)$ and $y\in I^\circ$ 
\begin{equation}\label{eq:lin_growth_sf_aux2}
\begin{split}
(\gamma+1)h
&\ge \int_{y-a_h(y)}^{y+a_h(y)} \frac{a_h(y)-|u-y|}{k_1(1+k_2u^2)}\,du
=a^2_h(y) \int_{-1}^1\frac{1-|z|}{k_1(1+k_2(y+za_h(y))^2)}dz\\
&\ge \frac{a^2_h(y)}{k_1(1+k_2\max_{z\in [-1,1]}(y+z a_h(y))^2)} \int_{-1}^1( 1-|z|)dz\\
&=\frac{a^2_h(y)}{k_1(1+k_2(|y|+ a_h(y))^2)}.
\end{split}
\end{equation}
Combining this with Proposition~\ref{prop:approx_comp}
ensures there exists $C_1\in (0,\infty)$ such that
for all $h\in (0,\ol h)$ and $y\in I^\circ$ it holds that 
\begin{equation}
a_h(y)\le \sqrt{(\gamma+1)k_1(1+k_2(2|y|+ C_1)^2)}\sqrt{h}.
\end{equation}
This
proves~\eqref{eq:lin_growth_sf}.
\end{proof}

\section{Condition~(C) implies finite moments}\label{sec:fin_mom}
We first establish moment bounds and a kind of temporal regularity for the general diffusion under Condition~(C).

\begin{theo}\label{prop:moment_bound_y}
Assume that Condition~(C) is satisfied and 
let $p\in [1,\infty)$ and $T\in (0,\infty)$.
Then there exists $ C\in (0,\infty)$ such that
for all $y\in I^\circ$ and $s<t$ in $[0,T]$ it holds
\begin{equation}\label{eq:time_reg_mom}
\left\|\sup_{u\in [s,t]}|Y_u-Y_s|\right\|_{L^p(P_y)}
\le C (1+k_2|y|) \sqrt{t-s}.
\end{equation}
In particular, for all $p\in[1,\infty)$ and $T\in(0,\infty)$
there exists $D\in(0,\infty)$ such that for all $y\in I^\circ$ we have
\begin{equation}\label{eq:24022019a1}
\left\|\sup_{u\in [0,T]}|Y_u-y|\right\|_{L^p(P_y)}\le D(1+k_2|y|).
\end{equation}
\end{theo}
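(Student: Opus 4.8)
The plan is to use the representation of a general diffusion in natural scale as a time-changed Brownian motion and to dominate $\sup_{u\in[s,t]}|Y_u-Y_s|$ pathwise by the corresponding increment of an auxiliary diffusion whose diffusion coefficient has linear growth. First I would reduce to the case $I=\R$: extend $m$ to a measure $\ol m$ on $\R$ that agrees with $m$ on $I^\circ$ and still satisfies Condition~(C), let $\ol Y$ be the general diffusion on $\R$ with speed measure $\ol m$, and observe that, since accessible boundary points of $Y$ are absorbing, one has $\sup_{u\in[s,t]}|Y_u-Y_s|\le\sup_{u\in[s,t]}|\ol Y_u-\ol Y_s|$ pathwise under $P_y$; by Feller's test, Condition~(C) forces $\pm\infty$ to be inaccessible for $\ol Y$, so $\ol Y$ is a genuine non-explosive diffusion on $\R$. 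Hence I may assume $I=\R$ and write $Y_u=B_{A^{-1}_u}$, where $B$ is a Brownian motion started at $y$ with local times $(L^x_v)$, $A_t=\int_\R L^x_t\,m(dx)$, and $A^{-1}$ is the right-continuous inverse; recurrence of $B$ gives $A_\infty=\infty$, so $A^{-1}$ is finite (see Section~VII.3 in \cite{RY} or \cite{RogersWilliams}).

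Next I would introduce, on the same probability space, $A^{(2)}_t=\int_\R L^x_t\,\frac{2}{k_1(1+k_2x^2)}\,dx$ and $Z_u=B_{(A^{(2)})^{-1}_u}$, i.e.\ the diffusion in natural scale with speed measure $\frac{2}{k_1(1+k_2x^2)}\,dx$; equivalently, $Z$ is the solution started at $y$ of $dZ_u=\sqrt{k_1(1+k_2Z_u^2)}\,d\beta_u$ for a Brownian motion $\beta$, so its diffusion coefficient has linear growth. Condition~(C) and $L^x\ge0$ give $A\ge A^{(2)}$, hence $A^{-1}_\delta\le(A^{(2)})^{-1}_\delta$ for every $\delta\ge0$; since $A^{(2)}$ is continuous and strictly increasing, $(A^{(2)})^{-1}$ maps $[0,\delta]$ onto $[0,(A^{(2)})^{-1}_\delta]$, so $\{Y_u:u\in[0,\delta]\}\subseteq\{B_v:v\in[0,(A^{(2)})^{-1}_\delta]\}=\{Z_u:u\in[0,\delta]\}$ and therefore $\sup_{u\in[0,\delta]}|Y_u-y|\le\sup_{u\in[0,\delta]}|Z_u-y|$ almost surely.

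It then remains to bound $E[\sup_{u\in[0,\delta]}|Z_u-y|^p]$. For $k_2=0$ this is immediate, since $Z_u=y+\sqrt{k_1}\beta_u$ and $\sup_{u\le\delta}|\beta_u|$ scales like $\sqrt\delta$, giving a bound uniform in $y$. For $k_2=1$ I would combine the Burkholder--Davis--Gundy inequality, the standard bound $\sup_{u\le T}E|Z_u|^p\le c(p,T)(1+|y|^p)$ for linear-growth SDEs, Jensen's inequality and Gronwall's lemma to obtain $E[\sup_{u\in[0,\delta]}|Z_u-y|^p]\le C(p,T)(1+k_2|y|)^p\delta^{p/2}$ for all $y\in\R$ and $\delta\in(0,T]$ (reducing $p<2$ to $p=2$). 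Plugging this into the domination of the previous paragraph already yields~\eqref{eq:24022019a1}, the case $s=0$, $t=T$. For the full statement~\eqref{eq:time_reg_mom} I would apply the Markov property of $Y$ at time $s$, which turns the left-hand side into $E_y[g(Y_s)]$ with $g(y')=\bigl\|\sup_{u\in[0,t-s]}|Y_u-y'|\bigr\|_{L^p(P_{y'})}^p\le C(p,T)(1+k_2|y'|)^p(t-s)^{p/2}$, and then control $E_y[(1+k_2|Y_s|)^p]$ by noting $E_y|Y_s-y|^p\le E_y[\sup_{u\in[0,s]}|Y_u-y|^p]$ and invoking the same estimate once more. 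The only genuinely delicate point is the bookkeeping in the first two paragraphs — the time-change representation, the reduction to $I=\R$, and the correct treatment of absorption at accessible boundaries (together with right-continuity and strict monotonicity of the inverse time changes); everything after that is routine.
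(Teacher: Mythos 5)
Your argument is correct in substance but follows a genuinely different route from the paper. The paper stays with $Y$ itself: from $A(t)-A(s)\ge\int_s^t\frac{du}{k_1(1+k_2W_u^2)}$ it deduces the time-change estimate $\gamma(t)-\gamma(s)\le\int_s^t k_1(1+k_2Y_u^2)\,du$, then runs Burkholder--Davis--Gundy, Jensen and Gronwall directly on $Y$ (with a localization by $\wh\tau_n$ to justify finiteness), and obtains the increment bound \eqref{eq:time_reg_mom} from the same inequality on $[s,t]$ combined with the already-proved sup bound. You instead dominate the running supremum of $Y$ pathwise by that of an auxiliary diffusion $Z$ with linear-growth coefficient, built from the same driving Brownian motion via the comparison $A\ge A^{(2)}$ of additive functionals, and then quote standard linear-growth SDE moment bounds; the increment estimate is recovered through the Markov property at time $s$ rather than directly. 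Both work; your version outsources the Gronwall step to a known result and makes the role of Condition~(C) (comparison of clocks) very transparent, at the price of some extra construction. The missing factor $\tfrac12$ in your $A$ and $A^{(2)}$ relative to the paper's normalization is harmless since you drop it consistently (it only rescales $Z$'s coefficient by a constant).

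One step as written does not survive scrutiny: the reduction to $I=\bbR$. Condition~(C) is only a \emph{lower} bound on $m$, so $m$ may fail to be locally finite at an accessible boundary point; e.g.\ $m(dx)\sim (r-x)^{-3/2}dx$ near a finite right endpoint $r$ satisfies Condition~(C), makes $r$ accessible (since $\int^r(r-x)\,m(dx)<\infty$), yet gives $m((c,r))=\infty$. Any extension $\ol m$ agreeing with $m$ on $I^\circ$ is then not a Radon measure on $\bbR$, so ``the general diffusion on $\bbR$ with speed measure $\ol m$'' does not exist (a regular diffusion has finite mean exit times from compact subintervals of the interior, hence a locally finite speed measure there); the time change by $\ol m$ would in fact still absorb at $r$. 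Fortunately the reduction is unnecessary: exactly as in the paper's representation, take $A(v)=\frac12\int_I L^x_v\,m(dx)$, which is $+\infty$ on $(H_{l,r}(W),\infty)$, so that $Y_u=W_{A^{-1}_u}$ including absorption. Before $H_{l,r}(W)$ the local time vanishes off $(l,r)$, and afterwards $A=\infty$, so $A\ge A^{(2)}$ holds on all of $[0,\infty)$ without any extension, and your domination $\sup_{u\le\delta}|Y_u-y|\le\sup_{u\le\delta}|Z_u-y|$ goes through verbatim; in the Markov-property step the value $Y_s$ may lie at an absorbing endpoint, where the corresponding $g$ vanishes, so nothing else changes. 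With that correction your proof is complete.
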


\begin{proof}
Throughout the proof we assume without loss of generality that $p\in [2,\infty)$
(the statement for $p\in[1,2)$ follows via monotonicity of $L^p$ norms). 
We fix $y\in I^\circ$ and denote by $C_i$, $i\in \N$,
constants that do not depend on $y$.
We use the fact that $Y$ can be written as a time-changed Brownian motion. To be more precise, by extending the probability space (if necessary),
we can find a Brownian motion $W$
starting in $y$ under $P_y$ such that
\begin{equation}\label{eq:25022019a1}
Y_t=W_{\gamma(t)},\quad\text{for all }t\in[0,\infty),
\end{equation}
with the random time change $\gamma\colon[0,\infty)\to[0,\infty)$
being the right inverse of $A$, i.e.,
\begin{equation}
\gamma(t)=\inf\{s\in [0,\infty)\colon A(s)>t\},\quad t \in  [0,\infty),
\end{equation}
where the increasing process $A\colon[0,\infty)\to[0,\infty]$
is given by the formula
\begin{equation}
A(t) = \frac12 \int_{I} L^x_t \, m(dx),\quad t\in [0,\infty),
\end{equation}
and $(L^x_t)$, $t \in [0,\infty)$, $x \in \IR$, denotes the the local time of~$W$
(see Theorem~V.47.1 in \cite{RogersWilliams}
and notice that our speed measure is twice as large
as the speed measure in \cite{RogersWilliams}).
Furthermore, $A$ is strictly increasing and continuous
on $[0,H_{l,r}(W)]\cap[0,\infty)$
and infinite on $(H_{l,r}(W),\infty)$;
$\gamma$ is continuous on $[0,\infty)$,
strictly increasing on $[0,H_{l,r}(Y)]$
(notice that $H_{l,r}(Y)=A(H_{l,r}(W))$)
and, on the event $\{H_{l,r}(Y)<\infty\}$,
$\gamma$ is constant on $[H_{l,r}(Y),\infty)$
(recall that accessible boundaries are assumed to be absorbing).
It follows from Condition~(C) that 
for all $s<t$ in $[0,H_{l,r}(W)]$ it holds
\begin{equation}\label{eq:lb_A2}
A(t) - A(s)  = \frac12 \int_{I} (L^x_t - L^x_s) m(dx)
\ge  \int_{\IR} 1_{I}(x)\frac{(L^x_t - L^x_s)}{k_1(1+k_2x^2)}\, dx =  \int_s^t \frac{1}{k_1(1+k_2W_u^2)}\,du,
\end{equation}
where the last equality follows from the occupation time formula.
Let $f \in C^1([0,\infty),[0,\infty))$ be the function given by $f(t)=\int_0^t k_1(1+k_2W^2_{\gamma(s)})ds$, $t\in [0,\infty)$.
Then it follows with the change of variables in Stieltjes integrals
for all $t\in [0,H_{l,r}(W)]$ that
$$
\int_0^{A(t)} k_1(1+k_2W^2_{\gamma(s)})\,ds=f(A(t))=\int_0^t f'(A(s))\,dA(s)=\int_0^t k_1(1+k_2W_s^2)\,dA(s).
$$
This and \eqref{eq:lb_A2} imply for all $s<t$ in $[0,H_{l,r}(Y)]$ that
$$
\gamma(t)-\gamma(s)
\le\int_{\gamma(s)}^{\gamma(t)} k_1(1+k_2W_u^2)\,dA(u)
=\int_s^{t} k_1(1+k_2W^2_{\gamma(u)})\,du
=\int_s^{t} k_1(1+k_2Y_u^2)\,du
$$
(notice that $A(\gamma(t))=t$ because $t\le H_{l,r}(Y)$).
As $\gamma$ is constant on $[H_{l,r}(Y),\infty)$, the latter estimate
\begin{equation}\label{eq:ub_gamma2}
\gamma(t)-\gamma(s)
\le\int_s^{t} k_1(1+k_2Y_u^2)\,du
\end{equation}
holds, in fact, for all $s<t$ in $[0,\infty)$.

Next, let $\widehat \tau$ be an $(\cF_t)_{t\in [0,\infty)}$-stopping time. Then it holds 
for all $t\in [0,\infty)$ that
\begin{equation}
E_y[\sup_{s\in [0,t]}|Y^{\widehat \tau}_s-y|^p]
= 
E_y[\sup_{s\in [0,t\wedge \widehat \tau]}|Y_s-y|^p]
=
E_y[\sup_{s\in [0,\gamma(t\wedge \widehat \tau)]}|W_s-y|^p].
\end{equation}
This and the Burkholder-Davis-Gundy inequality show that there exists $C_1\in (0,\infty)$ (independent of $\widehat \tau$ and $y$) such that for all $t\in [0,\infty)$ it holds
\begin{equation}
E_y[\sup_{s\in [0,t]}|Y^{\widehat \tau}_s-y|^p]= E_y[\sup_{s\in [0,\gamma(t\wedge \widehat \tau)]}|W_s-y|^p]
\le 
C_1 E_y[\gamma(t\wedge \widehat \tau)^{\frac{p}{2}}].
\end{equation}
Then it follows with \eqref{eq:ub_gamma2} for all $t\in [0,\infty)$ that
\begin{equation}
\begin{split}
E_y[\sup_{s\in [0,t]}|Y^{\widehat \tau}_s-y|^p]
&\le 
C_1E_y\left [\left(  \int_0^{t\wedge \widehat \tau} k_1(1+k_2Y_s^2)ds \right)^{\frac{p}{2}}\right]\\
&\le 
C_1 k_1^{\frac{p}{2}} E_y\left [\left(t+2k_2y^2 t+  2k_2\int_0^{t}(Y_s^{\widehat \tau}-y)^2ds \right)^{\frac{p}{2}}\right]\\
&\le 
2^{\frac{p}{2}-1}C_1k_1^{\frac{p}{2}}\left((t+2k_2y^2t)^{\frac{p}{2}}+(2k_2)^{\frac{p}{2}} E_y\left [
\left(\int_0^{t}(Y_s^{\widehat \tau}-y)^2ds \right)^{\frac{p}{2}}\right]\right).
\end{split}
\end{equation}
Combining this with Jensen's inequality proves for all $t\in [0,T]$ that
\begin{equation}\label{eq:ub_mom_Y_aux}
\begin{split}
E_y[\sup_{u\in [0,t]}|Y^{\widehat \tau}_u-y|^p]
&\le 
2^{\frac{p}{2}-1}C_1k_1^{\frac{p}{2}}\left((t+2k_2y^2 t)^{\frac{p}{2}}+(2k_2)^{\frac{p}{2}}t^{\frac{p}{2}-1}\int_0^t E_y\left [
|Y_s^{\widehat \tau}-y|^p \right] ds\right)\\
&\le 
2^{\frac{p}{2}-1}C_1k_1^{\frac{p}{2}}\left((T+2k_2y^2 T)^{\frac{p}{2}}+(2k_2)^{\frac{p}{2}}T^{\frac{p}{2}-1}\int_0^t E_y\left [
\sup_{u\in[0,s]}
|Y_u^{\widehat \tau}-y|^p ds\right]\right).
\end{split}
\end{equation}
We choose $\widehat \tau = \widehat \tau_n=\inf\{s\in [0,\infty): |Y_s-y|\ge n\}$, $n\in \N$. Then it holds for all $n\in \N$, $t\in [0,T]$ that
$\int_0^{t}E_y [\sup_{u\in [0,s]}|Y^{\widehat \tau_n}_u-y|^p]ds\le n^p \, T<\infty$. Applying Gronwall's inequality to~\eqref{eq:ub_mom_Y_aux} shows for all $n\in \N$ and $t\in [0,T]$ that
\begin{equation}
E_y[\sup_{s\in [0,t]}|Y^{\widehat \tau_n}_s-y|^p]
\le C_1k_1^{\frac{p}{2}}2^{\frac{p}{2}-1}(T+2k_2y^2T)^{\frac{p}{2}}
e^{C_1k_1^{\frac{p}{2}}2^{\frac{p}{2}-1}(2k_2)^{\frac{p}{2}}T^{\frac{p}{2}-1}t}.
\end{equation}
Since $\widehat \tau_n\nearrow \infty$ as $n\to \infty$, it follows with monotone convergence that
\begin{equation}
E_y[\sup_{s\in [0,T]}|Y_s-y|^p]
=\lim_{n\to \infty}
E_y[\sup_{s\in [0,T]}|Y^{\widehat \tau_n}_s-y|^p]
\le C_1k_1^{\frac{p}{2}}2^{\frac{p}{2}-1}(T+2k_2y^2T)^{\frac{p}{2}}
e^{C_1k_1^{\frac{p}{2}}2^{\frac{p}{2}-1}(2k_2)^{\frac{p}{2}}T^{\frac{p}{2}}}.
\end{equation}
To summarize, this proves that there exists $C_2\in (0,\infty)$ (independent of $y\in I^\circ$)
such that it holds
\begin{equation}\label{eq:moment_bound_y}
\begin{split}
\left\|\sup_{s\in [0,T]}|Y_s-y|\right\|_{L^p(P_y)}\le C_2(1+k_2|y|).
\end{split}
\end{equation}

Now note that for all $s<t$ in $[0,\infty)$ it holds that
\begin{equation}
E_y\left[\sup_{u\in [s,t]}|Y_u-Y_s|^p\right]
= 
E_y\left[\sup_{u\in [\gamma(s),\gamma(t)]}|W_u-W_{\gamma(s)}|^p\right].
\end{equation}
This and the Burkholder-Davis-Gundy inequality show that there exists $C_3\in (0,\infty)$ such that for all $s<t$ in $[0,\infty)$ it holds
\begin{equation}
E_y\left[\sup_{u\in [s,t]}|Y_u-Y_s|^p\right]
\le 
C_3 E_y\left[|\gamma(t)-\gamma(s)|^{\frac{p}{2}}\right].
\end{equation}
Then it follows with~\eqref{eq:ub_gamma2}
for all $s<t$ in $[0,\infty)$ that
\begin{equation}
\begin{split}
E_y\left[\sup_{u\in [s,t]}|Y_u-Y_s|^p\right]
&\le 
C_3E_y\left [\left( \int_s^{t} k_1(1+k_2Y_u^2)\,du \right)^{\frac{p}{2}}\right]\\
&\le 
C_3k_1^{\frac{p}{2}}E_y\left [\left(1+2k_2y^2+2k_2\sup_{u\in [s,t]}|Y_u-y|^2\right)^{\frac{p}{2}}\right](t-s)^{\frac{p}{2}}.
\end{split}
\end{equation}
Together with~\eqref{eq:moment_bound_y}, this
proves~\eqref{eq:time_reg_mom}.
Finally, \eqref{eq:24022019a1} immediately follows
from~\eqref{eq:time_reg_mom}.
\end{proof}

The next result establishes uniform in $h$ moment bounds for the approximations.

\begin{theo}\label{prop:moment_bound_approx}
Suppose that Condition~(C) is satisfied
and that \eqref{eq:gamma_cond} holds true.
Let $p\in [1,\infty)$ and $T\in (0,\infty)$.
Then there exists $C\in (0,\infty)$ such that 
for all $y\in I^\circ$ it holds that
\begin{equation}\label{eq:moment_bound_approx}
\sup_{h\in (0,\ol h)}\left\|\sup_{s\in [0,T]}| X^{h,y}_{s}-y| \right\|_{L^p(P)}
\le\sup_{h\in (0,\ol h)}\left \|\max_{k\in \{0,\ldots, \lceil T/h \rceil\}}|X^{h,y}_{kh}-y| \right\|_{L^p(P)}\le C(1+k_2 |y|).
\end{equation}
In particular, \eqref{eq:moment_bound_approx}
holds under Condition~(C)
and Condition~(A$\lambda$)
with an arbitrary $\lambda\in(0,\infty)$.
\end{theo}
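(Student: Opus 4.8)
The plan is to transfer everything onto the discrete-time Markov chain $(X^{h,y}_{kh})_{k\in\bbN_0}$ and then run a discrete Gronwall argument. First note that $k\mapsto X^{h,y}_{kh}-y$ is a martingale for the filtration generated by $(\xi_j)_{j\le k}$, since $E[X^{h,y}_{(k+1)h}-X^{h,y}_{kh}\mid\xi_1,\dots,\xi_k]=a_h(X^{h,y}_{kh})E[\xi_{k+1}]=0$, and its increments satisfy $(X^{h,y}_{(k+1)h}-X^{h,y}_{kh})^2=a_h(X^{h,y}_{kh})^2$ because $\xi_{k+1}^2=1$; thus its quadratic variation up to $k$ equals $\sum_{j=0}^{k-1}a_h(X^{h,y}_{jh})^2$. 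Since $X^{h,y}_{jh}$ is a function of $(\xi_1,\dots,\xi_j)$, it takes at most $2^j$ (finite) real values, so $m_k:=E\big[\max_{j\le k}|X^{h,y}_{jh}-y|^p\big]<\infty$ for every $k$ — this a priori finiteness is what will make the Gronwall step legitimate, and (unlike in the proof of Theorem~\ref{prop:moment_bound_y}) no localization is needed. It suffices to treat $p\in[2,\infty)$, the case $p\in[1,2)$ following by monotonicity of $L^p$-norms on a probability space. Finally, the first inequality in~\eqref{eq:moment_bound_approx} is immediate from the linear interpolation~\eqref{eq:13112017a1}: on $[0,\lceil T/h\rceil h]\supseteq[0,T]$ the path $X^{h,y}$ is piecewise affine with nodes $0,h,\dots,\lceil T/h\rceil h$, so $t\mapsto|X^{h,y}_t-y|$ attains its supremum over that interval at one of these nodes.

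For the second inequality, set $N=\lceil T/h\rceil$, $M_j=\max_{i\le j}|X^{h,y}_{ih}-y|$, and note $hN<T+\ol h<T+1$ because $h<\ol h<1$. The discrete Burkholder--Davis--Gundy inequality gives a constant $C_p$ with $m_k\le C_p\,E\big[\big(\sum_{j=0}^{k-1}a_h(X^{h,y}_{jh})^2\big)^{p/2}\big]$ for $k\in\{1,\dots,N\}$. By Proposition~\ref{prop:cond_d_lin_growth} there is $C_0$ with $a_h(x)^2\le 2C_0^2(1+k_2x^2)h$ for all $h,x$, and $|X^{h,y}_{jh}|^2\le 2M_j^2+2y^2$, so the inner sum is at most $C_1(1+k_2y^2)hk+C_2k_2h\sum_{j=0}^{k-1}M_j^2$. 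Using $(u+v)^{p/2}\le 2^{p/2-1}(u^{p/2}+v^{p/2})$, Jensen's inequality in the form $\big(\sum_{j=0}^{k-1}M_j^2\big)^{p/2}\le k^{p/2-1}\sum_{j=0}^{k-1}M_j^p$, and the identity $h^{p/2}k^{p/2-1}=h(hk)^{p/2-1}\le h(T+1)^{p/2-1}$ together with $hk<T+1$, one obtains after taking expectations
\[
m_k\le \alpha\,(1+k_2|y|)^p+\beta\, h\sum_{j=0}^{k-1}m_j,\qquad k\in\{1,\dots,N\},
\]
where $\alpha,\beta\in(0,\infty)$ depend on $p$ and $T$ but not on $h$ or $y$ (the passage to $(1+k_2|y|)^p$ uses $1+k_2y^2\le(1+k_2|y|)^2$ and $k_2\in\{0,1\}$, and $k_2^{p/2}\le1$). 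Since each $m_j$ is finite, the discrete Gronwall inequality yields $m_N\le\alpha(1+k_2|y|)^pe^{\beta hN}\le\alpha(1+k_2|y|)^pe^{\beta(T+1)}$, i.e.\ $\|\max_{k\le N}|X^{h,y}_{kh}-y|\|_{L^p(P)}\le C(1+k_2|y|)$ uniformly in $h\in(0,\ol h)$ and $y\in I^\circ$, as claimed.

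The step I expect to require the most care is the last manipulation in the previous paragraph: arranging the Burkholder--Davis--Gundy estimate so that the feedback term genuinely carries an \emph{extra} factor $h$ — i.e.\ it is $\beta h\sum_{j<k}m_j$ rather than $\beta\sum_{j<k}m_j$ or $\beta m_k$ — since only then does discrete Gronwall produce a bound uniform in $h$. This hinges on combining the Jensen step with the identity $h^{p/2}k^{p/2-1}=h(hk)^{p/2-1}$ and the uniform control $hk<T+1$. Keeping the final constant independent of $y$ also forces one to be careful with the elementary inequalities relating $1+k_2|x|$, $1+k_2x^2$ and $(1+k_2|y|)^2$, exploiting repeatedly that $k_2\in\{0,1\}$.
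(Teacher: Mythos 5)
Your proposal is correct and follows essentially the same route as the paper's proof: the discrete-time martingale property, the Burkholder--Davis--Gundy inequality, the bound $a_h(x)\le C(1+k_2|x|)\sqrt h$ from Proposition~\ref{prop:cond_d_lin_growth}, the Jensen step producing the extra factor $h$ via $h^{p/2}k^{p/2-1}=h(hk)^{p/2-1}\le h(T+1)^{p/2-1}$, and a discrete Gronwall argument uniform in $h$. Your explicit remarks on the a priori finiteness of the moments (so no localization is needed) and on the piecewise-affine justification of the first inequality are only small elaborations of what the paper leaves implicit.
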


\begin{proof}
Throughout the proof we assume without loss of generality that $p\in [2,\infty)$.  We fix $y\in I^\circ$ and denote by $C_i$, $i\in \N$,
constants that do not depend on $y$. For all
$h\in (0,\ol h)$ let $M^{h,y}_n=\max_{k\in \{0,\ldots,n\}}|X^{h,y}_{kh}-y|$,  $n\in \N_0$. 
The first  inequality in~\eqref{eq:moment_bound_approx} holds
by the construction of the continuous-time process $X^{h,y}$.
The fact that for every $h\in (0,\ol h)$ the process $(X^{h,y}_{kh})_{k\in\bbN_0}$ is a martingale, the Burkholder-Davis-Gundy inequality and the definition of
$(X^{h,y}_{kh})_{k\in\bbN_0}$
ensure that there exists $C_1\in (0,\infty)$ (independent of $y$) such that for all 
$h\in (0,\ol h)$ and $n\in \N$ it holds that
\begin{equation}
E[|M^{h,y}_n|^p]\le C_1 E\left[\left(\sum_{k=0}^{n-1}(X^{h,y}_{(k+1)h}-X^{h,y}_{kh})^2\right)^{\frac{p}{2}} \right]
=
C_1 E\left[\left(\sum_{k=0}^{n-1}(a_h(X^{h,y}_{kh}))^2\right)^{\frac{p}{2}} \right].
\end{equation}
This together with Proposition~\ref{prop:cond_d_lin_growth} shows that
 there exists a constant $C_2\in (0,\infty)$ such that
 for all 
 $h\in (0,\ol h)$ and $n\in \N$ it holds
 \begin{equation}
 \begin{split}
 E[|M^{h,y}_n|^p]&\le
 C_2 E\left[\left(\sum_{k=0}^{n-1}h(1+k_2(X^{h,y}_{kh})^2)\right)^{\frac{p}{2}} \right]\\
 &=
 C_2 E\left[\left(hn+k_2h\sum_{k=0}^{n-1}(X^{h,y}_{kh})^2\right)^{\frac{p}{2}} \right]\\
 &\le 
 C_2 E\left[\left((1+2k_2y^2)hn+2k_2h\sum_{k=0}^{n-1}(X^{h,y}_{kh}-y)^2\right)^{\frac{p}{2}} \right]\\
 &\le 
 C_22^{\frac{p}{2}-1}\left[ ((1+2k_2y^2)hn)^{\frac{p}{2}}+(2k_2h)^{\frac{p}{2}}E\left[\left(\sum_{k=0}^{n-1}(X^{h,y}_{kh}-y)^2\right)^{\frac{p}{2}} \right]\right].
 \end{split}
 \end{equation}
 Combining this with Jensen's inequality proves for all 
 $h\in (0,\ol h)$, $n\in \{1,\ldots, \lceil T/h \rceil\}$  that
 \begin{equation}
 \begin{split}
 E[|M^{h,y}_n|^p]
 &\le 
 C_2 2^{\frac{p}{2}-1}\left[ ((1+2k_2y^2)(T+1))^{\frac{p}{2}}+(2k_2h)^{\frac{p}{2}}n^{\frac{p}{2}-1}E\left[\sum_{k=0}^{n-1}|X^{h,y}_{kh}-y|^p \right]\right]\\
 &\le 
 C_22^{\frac{p}{2}-1}\left[ ((1+2k_2y^2)(T+1))^{\frac{p}{2}}+(2k_2)^{\frac{p}{2}}(T+1)^{\frac{p}{2}-1}h\sum_{k=0}^{n-1}E\left[|M^{h,y}_{k}|^p \right]\right].
 \end{split}
  \end{equation}
  It follows from Gronwall's inequality that for all 
 $h\in (0,\ol h)$, $n\in \{1,\ldots, \lceil T/h \rceil\}$ it holds
  \begin{equation}
  E[|M^{h,y}_n|^p]\le C_22^{\frac{p}{2}-1}((1+2k_2y^2)(T+1))^{\frac{p}{2}}
  \left(1+ C_22^{\frac{p}{2}-1}(2k_2)^{\frac{p}{2}}(T+1)^{\frac{p}{2}-1}h \right)^{n-1}.
  \end{equation}
  This implies that
  \begin{equation}
  \begin{split}
& \sup_{h\in (0,\ol h)}E\left [\max_{k\in \{0,\ldots, \lceil T/h \rceil\}}|X^{h,y}_{kh}-y|^p \right]\\
&\le 
 C_22^{\frac{p}{2}-1}((1+2k_2y^2)(T+1))^{\frac{p}{2}}
  \left[ \sup_{h\in (0,\ol h)}
  \left(1+ C_22^{\frac{p}{2}-1}(2k_2)^{\frac{p}{2}}(T+1)^{\frac{p}{2}-1}h \right)^{\frac{T}{h}}\right]
  <\infty,
  \end{split}
  \end{equation}
  which completes the proof.
\end{proof}

\section{Embedding the approximations into the general diffusion}\label{sec:embedding}
The proofs of
Theorems \ref{thm:wasserstein_term} and~\ref{thm:wasserstein_path}
rely on embeddings of the Markov chains $(X^{h,y}_{kh})_{k\in \N_0}$ into the diffusion $Y$ with sequences of stopping times. 
In Section~3 in \cite{aku2018cointossing} (see, in particular, Proposition~3.1 therein) we construct for all $h\in (0,\ol h)$ a sequence of stopping times $(\tau^h_k)_{k\in \N_0}$ such that
for all
$h\in (0,\ol h)$ and $y\in I^\circ$ we have 
\begin{equation}\label{eq:same_dist}
\Law_{P_y}
\left(Y_{\tau^h_k}; k\in \Znn \right)
=\Law_P
\left(X^{h,y}_{kh}; k\in \Znn\right).
\end{equation}
In the case, where both boundary points are inaccessible, the construction of the stopping times is straightforward: for all $h\in (0,\ol h)$ set $\tau^h_0=0$ and then
recursively define $\tau^h_{k+1}$ as the first time $t\ge \tau^h_{k}$ such 
that $|Y_t- Y_{\tau^h_k}|=a_h(Y_{\tau^h_k})$. In the case of accessible
(then absorbing) boundary points 
the stopping times are deterministically extended
on reaching an absorbing boundary
whenever $Y_{\tau^h_k}\notin I_h$.
For more details, see Section~3 in \cite{aku2018cointossing}.

Remark~1.2 in \cite{aku2018cointossing} shows that for every
$y\in I^\circ$ and $a\in[0,\infty)$ such that $y\pm a\in I$
the expected time it takes $Y$ started in $y$ to leave the interval 
$(y-a,y+a)$ is equal to $\frac{1}{2}\int_{(y-a,y+a)}(a-|u-y|)\, m(du)$.
Therefore, Condition~(A$\lambda$) ensures, roughly speaking, for all $k\in \N_0$ that the expected time lag $\tau^h_{k+1}-\tau^h_k$ deviates from $h$ by an order of $h^{1+\lambda}$. 
The next result concludes that the global temporal error $|\tau^h_k-kh|$ is of order $h^{\min\{\frac{1}{2},\lambda\}}$ under Condition~(A$\lambda$).
It is worth noting that, in contrast to many preceding results,
we do not need Condition~(C) here.

\begin{propo}\label{cor:26112018a1}
Let $p \in [1,\infty)$, $T\in (0,\infty)$, $\lambda\in (0,\infty)$ and assume that Condition~(A$\lambda$) is satisfied.
Then there exists $C\in (0,\infty)$ such that
for all $y \in I^\circ$ and $h\in (0,\ol h)$ it holds that
\begin{equation}\label{eq:glob_temp_error}
\left\|\sup_{k\in \{1,\ldots, \lfloor T/h \rfloor \}}\left|\tau^h_k-kh\right|\right\|_{L^p(P_y)}
\le C
 h^{\min\{\frac{1}{2},\lambda\}}.
\end{equation}
\end{propo}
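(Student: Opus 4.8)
The plan is to control the increments $\tau^h_{k+1}-\tau^h_k$ and then sum. Write $\Delta^h_k := \tau^h_{k+1}-\tau^h_k$ and decompose $\tau^h_n - nh = \sum_{k=0}^{n-1}(\Delta^h_k - h)$. The natural step is to pass to the martingale part of this sum. Let $\cG^h_k := \cF_{\tau^h_k}$ and set $D^h_k := \Delta^h_k - \bbE[\Delta^h_k \mid \cG^h_k]$. By the strong Markov property applied at $\tau^h_k$ together with~\eqref{eq:14042020a1}, on the event $\{Y_{\tau^h_k}\in I_h\}$ we have $\bbE[\Delta^h_k\mid\cG^h_k] = \tfrac12\int_{(Y_{\tau^h_k}-a_h(Y_{\tau^h_k}),\,Y_{\tau^h_k}+a_h(Y_{\tau^h_k}))}(a_h(Y_{\tau^h_k})-|u-Y_{\tau^h_k}|)\,m(du)$, which by Condition~(A$\lambda$) differs from $h$ by at most $Kh^{1+\lambda}$; on $\{Y_{\tau^h_k}\notin I_h\}$ one has $\Delta^h_k=h$ by the deterministic extension of the stopping times on reaching an absorbing boundary, so that term is exactly zero. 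Hence
\begin{equation*}
\Bigl|\tau^h_n - nh\Bigr| \le \Bigl|\sum_{k=0}^{n-1} D^h_k\Bigr| + n\,K h^{1+\lambda}.
\end{equation*}
For $n\le\lfloor T/h\rfloor$ the second term is bounded by $KT h^{\lambda}$, uniformly in $y$, which already accounts for the $h^{\lambda}$ part of the claimed rate.

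For the martingale part, $S^h_n := \sum_{k=0}^{n-1} D^h_k$ is a $(\cG^h_k)$-martingale, so by Doob's maximal inequality and the Burkholder--Davis--Gundy inequality,
\begin{equation*}
\Bigl\|\sup_{1\le n\le \lfloor T/h\rfloor}|S^h_n|\Bigr\|_{L^p(P_y)}
\le C_p\,\Bigl\|\Bigl(\sum_{k=0}^{\lfloor T/h\rfloor-1}(D^h_k)^2\Bigr)^{1/2}\Bigr\|_{L^p(P_y)}.
\end{equation*}
The key estimate I need is a bound on the conditional moments of $\Delta^h_k$: by the strong Markov property and the fact that $Y$ is a time-changed Brownian motion, for a Brownian motion exiting a symmetric interval of half-width $a$ one has $\bbE[H_{-a,a}(W)^q]\le c_q a^{2q}$ for every $q\ge 1$; translating through the time change and using~\eqref{eq:lin_growth_sf_woh} (the uniform bound $a_h(y)\le C+|y|$ from Proposition~\ref{prop:approx_comp}) together with Condition~(A$\lambda$) giving $\bbE[\Delta^h_k\mid\cG^h_k]\le(1+\gamma)h$, one obtains $\bbE[(\Delta^h_k)^q\mid\cG^h_k]\le c_q h^q$ — no, rather one needs to be careful: the right scaling is $\bbE[(\Delta^h_k)^2\mid\cG^h_k]\le c\,h\cdot a_h(Y_{\tau^h_k})^2$ is not quite it either; the clean route is to bound $\bbE[(\Delta^h_k)^q \mid \cG^h_k]$ using that $a_h(Y_{\tau^h_k})\le C(1+k_2|Y_{\tau^h_k}|)\sqrt h$ only when Condition~(C) holds, which we are told is \emph{not} assumed here. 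So instead I would use: for a BM, $\bbE_z[H_{-a,a}(W)^q]$ is comparable to $a^{2q}$, and $\Delta^h_k$ is the $A$-measure of the excursion interval, which is bounded using the occupation-times formula by $\sup_x(\text{local density of }m)$ times the BM exit time — this still needs an upper bound on $m$, which we do not have. The correct and assumption-free argument is simpler: from $\bbE[\Delta^h_k\mid\cG^h_k]\le(1+\gamma)h$ alone one cannot get higher moments, so one must use a square-function / quadratic-variation argument directly on $(\tau^h_n)$ via the embedding, noting $\tau^h_n = A(\gamma(\tau^h_n))$ and $\gamma(\tau^h_n)$ is the exit time of $W$ from a nested sequence of intervals; then $\sum_k(D^h_k)^2$ is controlled by the quadratic variation of the martingale $(W_{\gamma(\tau^h_k)})$ scaled appropriately. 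I expect the cleanest bookkeeping to give $\bbE[(\Delta^h_k)^2\mid\cG^h_k]\le c\,h^2$ with $c$ independent of $y$, using that the relevant BM exit times have all moments bounded by powers of $(\text{expected exit time})$, and the expected exit time equals the conditional mean $\le(1+\gamma)h$. Granting this, $\sum_{k=0}^{\lfloor T/h\rfloor-1}(D^h_k)^2$ has $L^{p/2}(P_y)$-norm $\lesssim \lfloor T/h\rfloor\cdot h^2 \lesssim h$, so the square function is $O(h^{1/2})$ in $L^p$, giving the $h^{1/2}$ part of the rate.

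Combining the two parts, $\|\sup_{n\le\lfloor T/h\rfloor}|\tau^h_n-nh|\|_{L^p(P_y)}\le C(h^{1/2}+h^{\lambda}) \le 2C h^{\min\{1/2,\lambda\}}$ for $h<1$, with $C$ depending only on $p,T,\lambda,K,\gamma$ and not on $y$, as claimed. The main obstacle is precisely the conditional second-moment bound $\bbE[(\Delta^h_k)^2\mid\cG^h_k]\le c\,h^2$ \emph{without} Condition~(C): the subtlety is that $\Delta^h_k$ is a time change of a BM exit time whose raw moments are controlled by the geometry ($a_h$) rather than by the $m$-mass, so one must carry the comparison $\bbE[(\Delta^h_k)^q\mid\cG^h_k]\le c_q(\bbE[\Delta^h_k\mid\cG^h_k])^q$ — a Khasminskii-type self-bounding estimate for additive functionals — which holds for hitting times of BM but requires a short argument to transfer to $Y$ through the random time change. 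Once that lemma is in hand, everything else is Doob/BDG/Gronwall-free summation.
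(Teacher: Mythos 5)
Your decomposition is exactly the paper's: split $\tau^h_n-nh$ into the compensator error $\sum_{k}\bigl(E[\rho^h_k\mid\cF_{\tau^h_{k-1}}]-h\bigr)$, bounded by $TKh^{\lambda}$ via Condition~(A$\lambda$) (the paper gets this from Proposition~5.3 and Remark~1.2 of \cite{aku2018cointossing}), plus a martingale part to be handled by a maximal inequality. The gap — which you flag yourself — is the martingale half: everything hinges on a uniform conditional moment bound for the time lags $\rho^h_k=\tau^h_k-\tau^h_{k-1}$, and it is never proved. Your attempts through the time-changed Brownian motion cannot work here, as you note, because bounding the time change from above needs an upper bound on the density of $m$, i.e.\ essentially Condition~(C), which is not assumed in this proposition. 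Your closing ``Khasminskii-type self-bounding'' suggestion is the right idea but is left as an unproven lemma, and in the form you state it the input is insufficient: knowing only $E[\rho^h_k\mid\cF_{\tau^h_{k-1}}]\le(1+\gamma)h$, i.e.\ the mean exit time started from the \emph{center} of the interval, does not launch the Markov-property iteration; you need $\sup_{w\in(a,b)}E_w[H_{a,b}(Y)]\lesssim h$ over \emph{all} starting points in the exit interval $(a,b)=(z-a_h(z),z+a_h(z))$. Also, for the $L^p$ statement with $p>2$, conditional second moments $\le ch^2$ alone do not suffice for your BDG step; you need $q$-th moments of order $h^q$ (e.g.\ to use Rosenthal or to sum $\|\rho^h_k-E[\rho^h_k\mid\cdot]\|_{L^p}^2$).

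The missing lemma is true and can be proved directly for $Y$, with no time change at all: $E_w[H_{a,b}(Y)]=\int_{(a,b)}G_{a,b}(w,u)\,m(du)$ with $G_{a,b}(w,u)=\frac{(w\wedge u-a)(b-w\vee u)}{b-a}\le(u-a)\wedge(b-u)$, which for the interval $(z-a_h(z),z+a_h(z))$ equals $a_h(z)-|u-z|$, i.e.\ twice the kernel in Condition~(A$\lambda$); hence $\sup_w E_w[H_{a,b}(Y)]\le 2(1+\gamma)h$ for $z\in I_h$ (and $\le 2h$ for $z\notin I_h$ by the definition of $l_h,r_h$). Then $P_w(H>t+s)\le\bigl(\sup_vP_v(H>s)\bigr)P_w(H>t)$ together with Markov's inequality gives geometric tails at scale $h$, so $E_w[H_{a,b}(Y)^q]\le c_qh^q$ for every $q$, which is what your Doob/BDG computation needs. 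A further loose end: your claim that the compensator error vanishes exactly on $\{Y_{\tau^h_k}\notin I_h\}$ relies on the precise construction of the deterministically extended stopping times and is not justified from what is stated; the paper only uses the one-sided bound $\frac12\sup_{z\in I\setminus I_h}\int_{(z-a_h(z),z+a_h(z))}(a_h(z)-|u-z|)\,m(du)\le h$ there. For comparison, the paper does not reprove the martingale estimate from scratch: it invokes Proposition~5.2 of \cite{aku2018cointossing}, which packages the $L^p$ maximal bound as $C\sqrt T\bigl(\sqrt h+\sup_zE_z[H_{z-a_h(z),z+a_h(z)}(Y)]/\sqrt h\bigr)$, and then bounds that supremum by $(K+2)h$ exactly as you do in your compensator step.
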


\begin{remark}
Note that the order of convergence of the right-hand side of \eqref{eq:glob_temp_error} does not become better than $\sqrt{h}$ for $\lambda\in [\frac{1}{2},\infty)$. We refer the reader to Equation~\eqref{eq:lb_temp_error} in the proof of Proposition~\ref{prop:lb_l4} below to see that $\sqrt{h}$ is the optimal order that, in general, cannot be improved even if 
$
\frac{1}{2}\int_{(y-a_h(y),y+a_h(y))} (a_h(y)-|u-y|)\,m(du)
=h
$ for all $y\in I^\circ$, $h\in (0,\ol h)$,
i.e., Condition~(A$\lambda$) is satisfied for all $\lambda \in (0,\infty)$.
\end{remark}

\begin{proof}
We introduce the time lags between two consecutive stopping times $\rho^h_k=\tau^h_{k}-\tau^h_{k-1}$, $h\in (0,\ol h)$, $k\in \N$.
It follows from Proposition~5.2 and Remark~1.2 in \cite{aku2018cointossing}
that there exists a constant $C_1\in (0,\infty)$
such that for all $y \in I^\circ$ and $h\in (0,\ol h)$
it holds that
\begin{equation}\label{eq:lalphamart}
\begin{split}
&\left\|\sup_{k\in \{1,\ldots, \lfloor T/h\rfloor \}}\left|\tau^h_k-\sum_{n=1}^k E[\rho^h_n|\mathcal F_{\tau^h_{n-1}}]\right|\right\|_{L^p(P_y)}\\
&\le C_1\sqrt{T}
\left(\sqrt{h}+
\frac{\sup_{z\in I}\left( E_z[H_{z-a_h(z), z + a_h(z)}(Y)]\right)}{\sqrt{h}}
\right)\\
&= C_1\sqrt{T}
\left(\sqrt{h}+
\frac{\frac12
\sup_{z\in I}\left( \int_{(z-a_h(z),z+a_h(z)}(a_h(z)-|u-z|)\,m(du)\right)
}{\sqrt{h}}
\right).
\end{split}
\end{equation}
It follows from the definition of the sets $I_h$, $h\in (0,\ol h)$, that for all $h\in (0,\ol h)$ it holds
$$
\frac12\sup_{z\in I\setminus I_h}
 \int_{(z-a_h(z),z+a_h(z))}(a_h(z)-|u-z|)\,m(du)\le h. 
$$
Condition~(A$\lambda$) implies for all $h\in (0,\ol h)$
$$
\frac12 \sup_{z\in I_h}\int_{(z-a_h(z),z+a_h(z))}(a_h(z)-|u-z|)\,m(du)\le Kh^{1+\lambda}+h\le (K+1)h.
$$
Hence, we obtain that for all $y \in I^\circ$ and $h\in (0,\ol h)$
\begin{equation}\label{eq:lalphamart2}
\begin{split}
&\left\|\sup_{k\in \{1,\ldots, \lfloor T/h\rfloor \}}\left|\tau^h_k-\sum_{n=1}^k E[\rho^h_n|\mathcal F_{\tau^h_{n-1}}]\right|\right\|_{L^p(P_y)}
\le C_1\sqrt{T}(K+2) \sqrt{h}.
\end{split}
\end{equation}
Moreover, Proposition~5.3,  Remark~1.2 in \cite{aku2018cointossing} and Condition~(A$\lambda$) imply
that
for all $y\in I^\circ$ and $h\in (0,\ol h)$ it holds that
\begin{equation}\label{eq:lalphapred}
\begin{split}
&\left\|\sup_{k\in \{1,\ldots, \lfloor T/h \rfloor \}}\left|\left(\sum_{n=1}^k E[\rho^h_n|\mathcal F_{\tau^h_{n-1}}]\right)-kh\right|\right\|_{L^p(P_y)}
\le \frac{T}{h} \sup_{z\in I_h}
\left|
E_z[H_{z-a_h(z),z+a_h(z)}(Y)]
-h
\right|\\
&=
 \frac{T}{h} \sup_{z\in I_h}
\left| \frac12
 \int_{(z-a_h(z),z+a_h(z)}(a_h(z)-|u-z|)\,m(du)
-h \right| \le TK h^{\lambda}.
\end{split}
\end{equation}
Combining \eqref{eq:lalphamart2} and \eqref{eq:lalphapred} proves that
for all $y\in I^\circ$ and $h\in (0,\ol h)$ it holds that
\begin{equation*}
\begin{split}
\left\|\sup_{k\in \{1,\ldots, \lfloor T/h \rfloor \}}\left|\tau^h_k-kh\right|\right\|_{L^p(P_y)}
&\le TK h^{\lambda}+ C_1\sqrt{T}(K+2) \sqrt{h}\\
 &\le  \left( TK+C_1\sqrt{T}(K+2) \right) h^{\min\{\frac{1}{2},\lambda\}}.
\end{split}
\end{equation*}
This concludes the proof.
\end{proof}

In addition to the moment estimates for general diffusions
presented in Section~\ref{sec:fin_mom}
we need a moment estimate up to the stopping times
$\tau^h_{\lfloor T/h\rfloor}$ for $T\in(0,\infty)$.

\begin{propo}\label{prop:moment_bound_til_tau}
Assume that Condition~(C) holds and that \eqref{eq:gamma_cond} is satisfied.
Let $p\in [1,\infty)$ and $T\in (0,\infty)$.
Then there exists $C\in (0,\infty)$ such that for all $y\in I^\circ$ it holds
\begin{equation}\label{eq:24022019a2}
\sup_{h\in (0,\ol h)}
\left\|\sup_{s\in [0,\tau^h_{\lfloor T/h \rfloor}] }|Y_{s}| \right\|_{L^p(P_y)}\le 
C(1+|y|).
\end{equation}
In particular, \eqref{eq:24022019a2}
holds under Condition~(C)
and Condition~(A$\lambda$)
with an arbitrary $\lambda\in(0,\infty)$.
\end{propo}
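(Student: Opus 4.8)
The plan is to reduce the supremum of $|Y|$ over the random interval $[0,\tau^h_{\lfloor T/h\rfloor}]$ to a maximum over the embedded values $(Y_{\tau^h_k})_{k}$, to transfer that maximum to the Markov chain via the identity in law~\eqref{eq:same_dist}, and then to invoke the uniform-in-$h$ moment bound of Theorem~\ref{prop:moment_bound_approx}. The key geometric ingredient is that, by the construction of the embedding stopping times in Section~3 of \cite{aku2018cointossing}, for every $k\in\bbN_0$ the path of $Y$ on $[\tau^h_k,\tau^h_{k+1}]$ remains in the closed interval $[\,Y_{\tau^h_k}-a_h(Y_{\tau^h_k}),\ Y_{\tau^h_k}+a_h(Y_{\tau^h_k})\,]$: in the generic case this is immediate, since $\tau^h_{k+1}$ is by definition the first time $Y$ leaves the corresponding open interval, and in the degenerate cases where the stopping times are deterministically extended after $Y$ has reached an absorbing boundary it still holds, because there $Y$ is frozen at the boundary value while $a_h$ vanishes at the boundary by assumption. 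Splitting $[0,\tau^h_{\lfloor T/h\rfloor}]$ into the pieces $[\tau^h_k,\tau^h_{k+1}]$, $k=0,\dots,\lfloor T/h\rfloor-1$, this yields
$$\sup_{s\in[0,\tau^h_{\lfloor T/h\rfloor}]}|Y_s|\ \le\ \max_{k\in\{0,\dots,\lfloor T/h\rfloor\}}\bigl(|Y_{\tau^h_k}|+a_h(Y_{\tau^h_k})\bigr).$$

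Next I would control the scale factor: by Proposition~\ref{prop:cond_d_lin_growth} together with $\ol h<1$ (or, alternatively, by the $h$-independent bound of Proposition~\ref{prop:approx_comp}) there is a constant $C_1\in(0,\infty)$, independent of $h$ and $y$, with $a_h(z)\le C_1(1+|z|)$ for all $z\in I^\circ$ and $h\in(0,\ol h)$, so that $|z|+a_h(z)\le(1+C_1)(1+|z|)$. Combined with the previous display this gives
$$\sup_{s\in[0,\tau^h_{\lfloor T/h\rfloor}]}|Y_s|\ \le\ (1+C_1)\Bigl(1+\max_{k\in\{0,\dots,\lfloor T/h\rfloor\}}|Y_{\tau^h_k}|\Bigr).$$
By~\eqref{eq:same_dist}, under $P_y$ the random variable $\max_{k\le\lfloor T/h\rfloor}|Y_{\tau^h_k}|$ has the same law as $\max_{k\le\lfloor T/h\rfloor}|X^{h,y}_{kh}|$ under $P$, which is at most $|y|+\max_{k\le\lceil T/h\rceil}|X^{h,y}_{kh}-y|$; taking $L^p$-norms and applying Theorem~\ref{prop:moment_bound_approx} produces a constant $C_2\in(0,\infty)$, independent of $h$ and $y$, with $\bigl\|\max_{k\le\lfloor T/h\rfloor}|Y_{\tau^h_k}|\bigr\|_{L^p(P_y)}\le|y|+C_2(1+k_2|y|)$. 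Inserting this into the last display and using $k_2\le1$ then gives~\eqref{eq:24022019a2} with a suitable constant $C$; the final assertion of the proposition follows because Condition~(A$\lambda$) implies~\eqref{eq:gamma_cond} by the standing convention.

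I do not expect a genuine difficulty here. The only step that needs some care is the geometric containment in the first paragraph, where one must check that the case of deterministically extended stopping times near an absorbing boundary is correctly accounted for — which is why I would appeal explicitly to the construction in \cite{aku2018cointossing} and to the normalization $a_h(l)=a_h(r)=0$. Everything else is a routine combination of Proposition~\ref{prop:cond_d_lin_growth}, the embedding identity~\eqref{eq:same_dist}, and the moment bound~\eqref{eq:moment_bound_approx} already established.
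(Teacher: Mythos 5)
Your proposal is correct and follows essentially the same route as the paper: the pathwise containment $\sup_{s\in[\tau^h_k,\tau^h_{k+1}]}|Y_s|\le |Y_{\tau^h_k}|+a_h(Y_{\tau^h_k})$, the linear-in-$|y|$ bound on the scale factor (the paper cites Proposition~\ref{prop:approx_comp}, but your alternative via Proposition~\ref{prop:cond_d_lin_growth} with $h<\ol h<1$ works equally well), and then the transfer through~\eqref{eq:same_dist} to Theorem~\ref{prop:moment_bound_approx}. Your extra care about the deterministically extended stopping times at an absorbing boundary is a valid elaboration of what the paper compresses into ``by construction of the stopping times.''
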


\begin{proof}
First note that by construction of the stopping times 
$\tau^h_k$, $h\in (0,\ol h)$, $k\in\N_0$,
it holds $P_y$-almost surely for all $y\in I^\circ$
that for all $h\in (0,\ol h)$ and $k\in \N_0$ 
\begin{equation}
\sup_{s\in [\tau^h_k,\tau^h_{k+1}]}|Y_s|\le \max\{ |Y_{\tau^h_k}+a_h(Y_{\tau^h_k})|,|Y_{\tau^h_k}-a_h(Y_{\tau^h_k})|\}
= |Y_{\tau^h_k}|+a_h(Y_{\tau^h_k}).
\end{equation}
This together with 
Proposition~\ref{prop:approx_comp} 
ensures that 
there exists $C_1\in (0,\infty)$ (independent of $y\in I^\circ$) such that
for all
$h\in (0,\ol h)$ and $k\in \N_0$ it holds 
\begin{equation}
\sup_{s\in [\tau^h_k,\tau^h_{k+1}]}|Y_s|\le
C_1+2|Y_{\tau^h_k}|.
\end{equation}
Combining this with Theorem~\ref{prop:moment_bound_approx}
and~\eqref{eq:same_dist}
shows that there exists $C_2\in (0,\infty)$ such that for all $y\in I^\circ$
it holds 
\begin{equation}
\begin{split}
\sup_{h\in (0,\ol h)}
\left\|\sup_{s\in [0,\tau^h_{\lfloor T/h \rfloor}] }|Y_{s}| \right\|_{L^p(P_y)}
&=
\sup_{h\in (0,\ol h)}
\left\|\max_{k\in\{0,\ldots,\lfloor T/h \rfloor-1\}}\sup_{s\in [\tau^h_k,\tau^h_{k+1}] }|Y_{s}| \right\|_{L^p(P_y)}\\
&\le 
C_1+
2\sup_{h\in (0,\ol h)}
\left\|\max_{k\in\{0,\ldots,\lfloor T/h \rfloor-1\}}|Y_{\tau^h_k}| \right\|_{L^p(P_y)}
\\
&\le
C_1+
2\left(
C_2(1+k_2|y|)
+|y|
\right),
\end{split}
\end{equation}
which completes the proof.
\end{proof}

\section{Rate of convergence for time marginals}
\label{sec:rate}
In this section, we prove Theorem~\ref{thm:wasserstein_term}. 
Throughout the section let $(\tau^h_k)_{k\in \N_0}$, $h\in (0,\ol h)$, be the embedding stopping times introduced
in Section~\ref{sec:embedding}. A central step in our approach is to establish $L^p$ bounds between the random variables $Y_T$ and $Y_{\tau^h_{\lfloor T/h \rfloor}}$.
These are provided in the next result.

\begin{theo}\label{thm:rate_term}
Suppose that Condition~(C) is satisfied
and that there exists $\lambda \in (0,\infty)$
such that Condition~(A$\lambda$) holds.
Let $T\in (0,\infty)$ and $p\in [1,\infty)$.
Then there exists a constant $C(p,T) \in (0,\infty)$ such that for all $y\in I^\circ$ and $h\in (0,\ol h)$ we have 
\begin{equation}\label{eq:rate_term}
\left\|Y_{\tau^h_{\lfloor T/h \rfloor}}-Y_T\right\|_{L^p(P_y)}
\le C(p,T)(1+k_2|y|) h^{\min\{\frac14,\frac{\lambda}{2}\}}.
\end{equation}
\end{theo}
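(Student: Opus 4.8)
The plan is to exploit the representation of $Y$ as a time-changed Brownian motion, $Y_t = W_{\gamma(t)}$, established in the proof of Theorem~\ref{prop:moment_bound_y}. In particular, $Y$ is a continuous local martingale (w.r.t.\ $(\cF_t)$, with the convention that accessible boundaries are absorbing, so $Y$ is simply constant after an absorbing boundary is reached) with quadratic variation $\langle Y\rangle_t = \gamma(t)$, and one has the pathwise bound $\gamma(t) - \gamma(s) \le \int_s^t k_1(1+k_2 Y_u^2)\,du$ for all $s<t$ (this is~\eqref{eq:ub_gamma2}). Write $N = \lfloor T/h\rfloor$ and introduce the stopping times $\sigma = \tau^h_N\wedge T$ and $\theta = \tau^h_N\vee T$, so that $\{Y_\sigma,Y_\theta\}=\{Y_{\tau^h_N},Y_T\}$ and $\theta-\sigma = |\tau^h_N - T|$. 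First I would apply the Burkholder--Davis--Gundy inequality to the continuous local martingale $t\mapsto Y_{(\sigma+t)\wedge\theta} - Y_\sigma$, whose quadratic variation at infinity equals $\gamma(\theta)-\gamma(\sigma)$; this provides a constant $C_p\in(0,\infty)$ with
\[
E_y\bigl[|Y_{\tau^h_N} - Y_T|^p\bigr] \le E_y\Bigl[\sup_{\sigma\le t\le\theta}|Y_t - Y_\sigma|^p\Bigr] \le C_p\, E_y\bigl[(\gamma(\theta)-\gamma(\sigma))^{p/2}\bigr].
\]

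Next I would estimate the right-hand side. Using~\eqref{eq:ub_gamma2} pathwise gives $\gamma(\theta)-\gamma(\sigma) \le k_1\bigl(1+k_2\sup_{u\le\theta}Y_u^2\bigr)\,|\tau^h_N - T|$, whence the Cauchy--Schwarz inequality yields
\[
E_y\bigl[(\gamma(\theta)-\gamma(\sigma))^{p/2}\bigr] \le k_1^{p/2}\,\Bigl\|1+k_2\sup_{u\le\theta}Y_u^2\Bigr\|_{L^p(P_y)}^{p/2}\,\bigl\|\tau^h_N - T\bigr\|_{L^p(P_y)}^{p/2}.
\]
For the first factor I would bound $\sup_{u\le\theta}|Y_u| \le \sup_{u\le\tau^h_N}|Y_u| + \sup_{u\le T}|Y_u|$ and invoke Proposition~\ref{prop:moment_bound_til_tau} and Theorem~\ref{prop:moment_bound_y} (with exponent $2p$) to get $\|\sup_{u\le\theta}|Y_u|\|_{L^{2p}(P_y)} \le C(1+|y|)$; since the resulting quantity $\bigl(1+k_2 C^2(1+|y|)^2\bigr)^{p/2}$ equals $1$ when $k_2=0$ and is bounded by a constant times $(1+|y|)^p$ when $k_2=1$, it is at most $C(p,T)(1+k_2|y|)^p$ in both cases. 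For the second factor I would use $\|\tau^h_N - T\|_{L^p(P_y)} \le \|\tau^h_N - Nh\|_{L^p(P_y)} + |Nh - T|$ together with Proposition~\ref{cor:26112018a1} and the elementary bound $|Nh - T| < h \le h^{\min\{1/2,\lambda\}}$ (valid since $\ol h<1$) to obtain $\|\tau^h_N - T\|_{L^p(P_y)} \le C\,h^{\min\{1/2,\lambda\}}$, so this factor is at most $C\,h^{p\min\{1/4,\lambda/2\}}$. Combining the three displays and taking $p$-th roots gives~\eqref{eq:rate_term}.

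The step that needs the most care is the martingale estimate: one must justify that $Y$ is a genuine continuous local martingale with $\langle Y\rangle=\gamma$ (clear from the time-change representation, including the degenerate case where $\gamma$ becomes constant after an accessible, hence absorbing, boundary is hit), that $\tau^h_N<\infty$ a.s.\ (which follows from Proposition~\ref{cor:26112018a1}), and that BDG may be applied to $t\mapsto Y_{(\sigma+t)\wedge\theta}-Y_\sigma$ --- this is legitimate since the terminal quadratic variation $\gamma(\theta)-\gamma(\sigma)$ has finite $p/2$-th moment by the bounds above (a standard localisation together with Fatou's lemma then removes the local-martingale caveat). The remaining manipulations are routine applications of Minkowski's and Hölder's inequalities and of the moment estimates from Sections~\ref{sec:fin_mom} and~\ref{sec:embedding}.
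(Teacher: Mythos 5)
Your proposal is correct and follows essentially the same route as the paper: the time-change representation $Y=W_{\gamma(\cdot)}$, a Burkholder--Davis--Gundy estimate reducing the problem to $E_y[(\gamma(\tau^h_{\lfloor T/h\rfloor})-\gamma(T))^{p/2}]$ (the paper applies BDG to $W$ between the random times $\gamma(\tau^h_{\lfloor T/h\rfloor})$ and $\gamma(T)$, you apply it to the stopped local martingale $Y$ itself, which is an equivalent formulation), then the pathwise bound~\eqref{eq:ub_gamma2}, Cauchy--Schwarz, the moment bounds of Theorem~\ref{prop:moment_bound_y} and Proposition~\ref{prop:moment_bound_til_tau}, and the temporal rate of Proposition~\ref{cor:26112018a1} with $|h\lfloor T/h\rfloor-T|\le h$. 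The handling of the $k_2$-dependence and the localisation/Fatou justification of BDG also match the paper's argument, so no changes are needed.
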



\begin{proof}
Throughout the proof we use the notation
from the proof of Theorem~\ref{prop:moment_bound_y},
in particular, the fact that $Y$ can be written
as a time-changed Brownian motion (recall~\eqref{eq:25022019a1}).
We fix $y\in I^\circ$ and denote by $C_i\in (0,\infty)$, $i\in \N$, constants that are independent of $y$.
First note that for all $h\in (0,\ol h)$ it holds that
\begin{equation}
E_y[|Y_{\tau^h_{\lfloor T/h \rfloor}}-Y_T|^p]
= 
E_y[|W_{\gamma(\tau^h_{\lfloor T/h \rfloor})}-W_{\gamma(T)}|^p].
\end{equation}
This and the Burkholder-Davis-Gundy inequality show that there exists $C_1\in (0,\infty)$ such that for all $h\in (0,\ol h)$ it holds
\begin{equation}
E_y[|Y_{\tau^h_{\lfloor T/h \rfloor}}-Y_T|^p]
\le 
C_1 E_y[|\gamma(\tau^h_{\lfloor T/h \rfloor})-\gamma(T)|^{\frac{p}{2}}].
\end{equation}
Then it follows with \eqref{eq:ub_gamma2} and the Cauchy-Schwarz inequality
that there exists $C_2\in(0,\infty)$ such that
for all $h\in (0,\ol h)$ we have
\begin{equation}
\begin{split}
E_y[|Y_{\tau^h_{\lfloor T/h \rfloor}}-Y_T|^p]
&\le 
C_2E_y\left [\left( \int_{\tau^h_{\lfloor T/h \rfloor}\wedge T}^{\tau^h_{\lfloor T/h \rfloor}\vee T} (1+k_2Y_u^2)du \right)^{\frac{p}{2}}\right]\\
&\le 
C_2E_y\left [(1+k_2\sup_{u\in [0,\tau^h_{\lfloor T/h \rfloor}\vee T]}Y_u^2)^{\frac{p}{2}}|\tau^h_{\lfloor T/h \rfloor}-T|^{\frac{p}{2}}\right]\\
&\le 
C_2\
\sqrt{E_y\left [\left(1+k_2\sup_{u\in [0,\tau^h_{\lfloor T/h \rfloor}\vee T]}Y_u^2\right)^{p}\right]E_y\left [|\tau^h_{\lfloor T/h \rfloor}-T|^{p}\right]}.
\end{split}
\end{equation}
The triangle inequality ensures for all $h\in (0,\ol h)$ that
\begin{equation}\label{eq:sing_point_aux1}
\begin{split}
&\left\|Y_{\tau^h_{\lfloor T/h \rfloor}}-Y_T\right\|_{L^p(P_y)}\\
&\le C_2^{\frac{1}{p}} \sqrt{\left(1+k_2\left\|\sup_{u\in [0,\tau^h_{\lfloor T/h \rfloor}\vee T]}Y_u^2 \right\|_{L^p(P_y)} \right) 
\left(
\left\|  \tau^h_{\lfloor T/h \rfloor}-h \left\lfloor \frac{T}{h} \right \rfloor   \right\|_{L^p(P_y)}
+  T-  h\left\lfloor \frac{T}{h}  \right \rfloor 
\right)
 }\\
 &\le C_2^{\frac{1}{p}} \sqrt{ \left(1+k_2\left\|\sup_{u\in [0,\tau^h_{\lfloor T/h \rfloor}\vee T]}|Y_u| \right\|^2_{L^{2p}(P_y)} \right) 
\left(
\left\|  \tau^h_{\lfloor T/h \rfloor}-h \left\lfloor \frac{T}{h} \right \rfloor   \right\|_{L^p(P_y)}
+ h
\right)
 }.
 \end{split}
\end{equation}
Theorems \ref{prop:moment_bound_y}
and~\ref{prop:moment_bound_til_tau} ensure that
there exists $C_3 \in (0,\infty)$ such that
\begin{equation}\label{eq:sing_point_aux2}
\sup_{h\in (0,\ol h)}
\left\|\sup_{u\in [0,\tau^h_{\lfloor T/h \rfloor}\vee T]}|Y_u| \right\|_{L^{2p}(P_y)}
\le 
C_3(1+|y|).
\end{equation}
Moreover, Proposition~\ref{cor:26112018a1} implies that there exists $C_4 \in (0,\infty)$ such that for all $h\in (0,\ol h)$ it holds that
\begin{equation}\label{eq:sing_point_aux3}
\left\|  \tau^h_{\lfloor T/h \rfloor}-h \left\lfloor \frac{T}{h} \right \rfloor   \right\|_{L^p(P_y)}
\le C_4 h^{\min\{\frac12,\lambda\}}.
\end{equation}
Combining \eqref{eq:sing_point_aux1}, \eqref{eq:sing_point_aux2} and 
\eqref{eq:sing_point_aux3}
entails that there exists $C_5\in (0,\infty)$ such that
\begin{equation}
\left\|Y_{\tau^h_{\lfloor T/h \rfloor}}-Y_T\right\|_{L^p(P_y)}
\le C_5(1+k_2|y|) h^{\min\{\frac14,\frac{\lambda}{2}\}}.
\end{equation}
This completes the proof.
\end{proof}

\begin{proof}[Proof of Theorem~\ref{thm:wasserstein_term}]
Throughout the proof let $p\in [1,\infty)$ and $T\in (0,\infty)$.
By Theorems \ref{prop:moment_bound_y}
and~\ref{prop:moment_bound_approx},
the random variables $Y_T$,
$X^{h,y}_{h\lfloor T/h \rfloor}$
and $X^{h,y}_T$,
$h\in (0,\ol h)$, admit finite $p$-th moments.
For every $h\in(0,\ol h)$ we define the random variable
\begin{equation}\label{eq:24022019b1}
Y^h_T=Y_{\tau^h_{\lfloor T/h\rfloor}}
+(T/h-\lfloor T/h\rfloor)
(Y_{\tau^h_{\lfloor T/h\rfloor+1}}-Y_{\tau^h_{\lfloor T/h\rfloor}})
\end{equation}
and notice that,
by \eqref{eq:same_dist} and~\eqref{eq:13112017a1},
for every $h\in(0,\ol h)$ and $y\in I^\circ$
it holds
$$
\Law_{P_y}
(Y_{\tau^h_{\lfloor T/h \rfloor}})
=\Law_P
(X^{h,y}_{h\lfloor T/h \rfloor})
\quad\text{and}\quad
\Law_{P_y}
(Y^h_T)
=\Law_P
(X^{h,y}_T).
$$
In particular,
the random vector $(Y_{\tau^h_{\lfloor T/h \rfloor}},Y_T)$
(resp.\ $(Y^h_T,Y_T)$)
is a coupling between the laws of $X^{h,y}_{h\lfloor T/h \rfloor}$ and $Y_T$
(resp.\ $X^{h,y}_T$ and $Y_T$).
Therefore, Statement~\eqref{eq:24022019c1}
of Theorem~\ref{thm:wasserstein_term}
follows directly from Theorem~\ref{thm:rate_term}.
Next, applying~\eqref{eq:24022019b1},
Proposition~\ref{prop:cond_d_lin_growth},
Theorem~\ref{prop:moment_bound_approx}
and~\eqref{eq:same_dist},
we obtain that
there exist constants $C_1\in(0,\infty)$ and $C_2\in(0,\infty)$
such that for all $y\in I^\circ$ and $h\in(0,\ol h)$ it holds
\begin{equation*}
\begin{split}
\left\|Y^h_T-Y_{\tau^h_{\lfloor T/h \rfloor}}\right\|_{L^p(P_y)}
&\le
\left\|Y_{\tau^h_{\lfloor T/h \rfloor+1}}-Y_{\tau^h_{\lfloor T/h \rfloor}}\right\|_{L^p(P_y)}
=
\left\|a_h(Y_{\tau^h_{\lfloor T/h \rfloor}})\right\|_{L^p(P_y)}\\
&\le
C_1\sqrt{h}
\left\|1+k_2|Y_{\tau^h_{\lfloor T/h \rfloor}}|\right\|_{L^p(P_y)}
\le
C_2(1+k_2|y|)\sqrt{h}.
\end{split}
\end{equation*}
Together with Theorem~\ref{thm:rate_term},
this establishes \eqref{eq:24022019c2}
and concludes the proof.
\end{proof}

\subsection*{Optimality of the rate $1/4$}
In this subsection we consider
the EMCEL scheme $a_h=\wh a_h$
(see Example~\ref{ex:25022019a1})
in the situation, where both boundaries are inaccessible
and hence $I_h=I^\circ$ for all $h\in(0,\ol h)$.
Recall that in this case
scale factors $a_h$ are chosen in such a way that
for all 
$y\in I^\circ$, $h\in (0,\ol h)$ it holds
\begin{equation}\label{eq:exact_sf}
\frac{1}{2}\int_{(y-a_h(y),y+a_h(y))} (a_h(y)-|u-y|)\,m(du)=h.
\end{equation}
In particular, Condition (A$\lambda$) is satisfied for all $\lambda \in (0,\infty)$. In this situation Theorem~\ref{thm:rate_term} ensures that 
$Y_{\tau^h_{\lfloor T/h \rfloor}}$ converges to $Y_T$ in $L^p$ at a rate of at least $1/4$. 
We close this section by showing that the convergence rate $1/4$ is the best rate that can be obtained for this $L^p$ convergence.
Indeed, for such a nice Markov process as Brownian motion,
the approximation scheme does not converge faster.
To prove this,
we need the following auxiliary result. 

\begin{lemma}\label{lem:lb_W4tau}
Let $W$ be a Brownian motion starting in $0$,
$\tau$ a square integrable stopping time.
Then it holds that
\begin{equation}\label{eq:lb_W4tau}
E[W^4_\tau]\ge \frac{1}{4} E[\tau^2].
\end{equation}
\end{lemma}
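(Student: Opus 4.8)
The plan is to exploit the two martingales naturally associated with Brownian motion: $W_t$ itself and $W_t^2 - t$. Since $\tau$ is square integrable, optional stopping gives $E[W_\tau^2] = E[\tau]$. The key additional identity is that $W_t^4 - 6tW_t^2 + 3t^2$ is a martingale (it is obtained by applying Itô's formula, or equivalently it is the fourth Hermite-type polynomial for Brownian motion); more useful here is to track the process $W_t^4 - 6\int_0^t W_s^2\,ds$, which is a local martingale with quadratic variation controlled by $\int_0^t W_s^4\,ds$, so that by a localization argument together with the finiteness of $E[\tau^2]$ (which, via the Burkholder-Davis-Gundy inequality applied to $W_{t\wedge\tau}$, yields $E[\sup_{t\le\tau}W_t^4]<\infty$) one obtains
\begin{equation}
E[W_\tau^4] = 6\,E\!\left[\int_0^\tau W_s^2\,ds\right].
\end{equation}

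Next I would bound the right-hand side from below in terms of $E[\tau^2]$. Writing $\int_0^\tau W_s^2\,ds \ge \int_0^\tau W_s^2\,ds$ is not enough by itself; instead, the idea is to use $E[W_s^2\mathbf 1_{\{s<\tau\}}]$ and Fubini to get $E[\int_0^\tau W_s^2\,ds] = \int_0^\infty E[W_s^2\mathbf 1_{\{s<\tau\}}]\,ds$. One then needs a lower bound of the form $E[W_s^2\mathbf 1_{\{s<\tau\}}] \ge c\cdot E[(\tau-s)^+]$ or similar, integrated to produce $E[\tau^2]$. A cleaner route: apply optional stopping to the martingale $W_t^2-t$ between the deterministic time $s$ and $\tau\vee s$ to see that $E[W_{\tau\vee s}^2 - W_s^2] = E[(\tau\vee s) - s] = E[(\tau-s)^+]$, hence $E[W_\tau^2\mathbf 1_{\{\tau\ge s\}}] \ge E[W_s^2 \mathbf 1_{\{\tau\ge s\}}] \ge \dots$; after integrating in $s$ and using $E[\int_0^\tau W_s^2\,ds]=\int_0^\infty E[W_s^2;\tau>s]\,ds$ together with the tower property, one arrives at $E[\int_0^\tau W_s^2\,ds] \ge \frac1{12}E[\tau^2]$ after combining with $6$. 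Concretely, I expect the bookkeeping to give $6\,E[\int_0^\tau W_s^2\,ds]\ge \frac12 E[\tau^2]$ — wait, the target is $\frac14$, so there is factor room — and the cleanest way to land exactly on $\frac14$ is: by Jensen/Cauchy-Schwarz, $E[\tau]^2 \le E[\tau^2]$ is the wrong direction, so instead use that $E[W_\tau^4]\ge (E[W_\tau^2])^2 = (E[\tau])^2$ is also the wrong direction; hence the genuine content must come from the identity $E[W_\tau^4]=6E[\int_0^\tau W_s^2\,ds]$ combined with a pathwise-type lower bound $\int_0^\tau W_s^2\,ds \ge \frac{1}{24}\tau^2$ in expectation, which I would extract by the optional-stopping-of-$W_t^2-t$ argument above.

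The main obstacle I anticipate is twofold. First, the rigorous justification of $E[W_\tau^4] = 6\,E[\int_0^\tau W_s^2\,ds]$ requires a localization argument (stopping at $\tau\wedge n$ or at exit times of $W$ from $[-n,n]$) followed by passage to the limit, and one must control the error terms using only $E[\tau^2]<\infty$; the BDG inequality applied to the bounded-variation-plus-martingale decomposition supplies the needed uniform integrability, but this step needs care. Second, getting the constant exactly $\frac14$ (rather than some smaller explicit constant) from the lower bound on $E[\int_0^\tau W_s^2\,ds]$ is delicate: one wants to show $E[\int_0^\tau W_s^2\,ds]\ge\frac{1}{24}E[\tau^2]$, which I would obtain by writing $\frac12\tau^2 = \int_0^\tau s\,ds = \int_0^\tau E[W_s^2]\,ds$ (using $E[W_s^2]=s$, but this is only true unconditionally, not under the stopped measure, so one must instead integrate the identity $E[(W_{\tau\wedge t}^2 - (\tau\wedge t)] = 0$ and manipulate), and then compare $E[\int_0^\tau W_s^2\,ds]$ with $E[\int_0^\tau s\,ds] = \frac12 E[\tau^2]$ via the submartingale property of $W_s^2$ together with the fact that $\{\tau>s\}$ is $\mathcal F_s$-measurable, giving $E[W_s^2;\tau>s]\ge$ something comparable to $E[\text{(a quantity integrating to }\tfrac12 E[\tau^2])]$ up to the factor that reconciles with the $6$ in front. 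Tracking these constants correctly to reach precisely $\tfrac14$ is where I expect to spend the most effort.
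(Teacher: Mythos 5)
Your proposal does not close, and the gap is precisely at the constant you flag yourself. The decisive claim in your plan is the lower bound $E[\int_0^\tau W_s^2\,ds]\ge\frac{1}{24}E[\tau^2]$, and nothing in the sketch proves it. The pointwise-in-$s$ bound you suggest, $E[W_s^2\mathbf 1_{\{s<\tau\}}]\ge c\,E[(\tau-s)^+]$, is false for any universal $c$: take $\tau=H_{-\eps,\eps}(W)$ and let $s\to0$; the left-hand side tends to $0$ while the right-hand side tends to $E[\tau]>0$. Your optional-stopping computation, carried out correctly, gives the identity $E[(W_\tau^2-W_s^2)\mathbf 1_{\{\tau>s\}}]=E[(\tau-s)^+]$, and integrating in $s$ yields only $E[\int_0^\tau W_s^2\,ds]=E[\tau W_\tau^2]-\frac12E[\tau^2]$, i.e.\ $E[\tau W_\tau^2]\ge\frac12E[\tau^2]$; combined with your identity $E[W_\tau^4]=6E[\int_0^\tau W_s^2\,ds]$ this gives only $E[W_\tau^4]\ge 6\cdot\frac12E[\tau^2]-3E[\tau^2]=0$, which is vacuous. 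So the route through lower-bounding $E[\int_0^\tau W_s^2\,ds]$ by a multiple of $E[\tau^2]$ is a dead end as presented (that inequality is true, but only as a consequence of the lemma itself).

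The missing idea is one line away from what you already have: once you know $E[\tau W_\tau^2]\ge\frac12E[\tau^2]$ (which your integrated optional-stopping identity does give, or which follows from It\^o applied to $tW_t^2$ as in the paper), apply Young's inequality $\tau W_\tau^2\le\frac14\tau^2+W_\tau^4$ (or Cauchy--Schwarz, $E[\tau W_\tau^2]\le\sqrt{E[\tau^2]E[W_\tau^4]}$) to get exactly $E[W_\tau^4]\ge\frac14E[\tau^2]$. This is the paper's proof: first for bounded $\tau$, then for square-integrable $\tau$ by considering $\tau\wedge n$, using monotone convergence on $E[(\tau\wedge n)^2]$ and Burkholder--Davis--Gundy plus dominated convergence on $E[W_{\tau\wedge n}^4]$ — your remarks on localization and uniform integrability are fine and essentially match that step. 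The fourth-moment martingale $W_t^4-6\int_0^t W_s^2\,ds$ is an unnecessary detour, and as written your argument never reaches the constant $\frac14$.
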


\begin{proof}
First let $\tau$ be bounded. Then it follows with It\^o's formula that
\begin{equation}
E[\tau W_\tau^2]=E\left[\int_0^\tau (2sW_s)dW_s +\int_0^\tau (W_s^2+s)ds \right]
\ge 
E\left[\int_0^\tau s ds \right]=\frac{1}{2}	E[\tau^2].
\end{equation}
This together with Young's inequality implies that
\begin{equation}
\frac{1}{2} E[\tau^2]\le E[\tau W_\tau^2]\le E\left[\frac{1}{4}\tau^2+W_\tau^4\right].
\end{equation}
This proves \eqref{eq:lb_W4tau} for bounded stopping times. Next, let $\tau$ be square integrable. Then it holds for all $n\in \N$ that
\begin{equation}\
E[W^4_{\tau\wedge n}]\ge \frac{1}{4} E[(\tau\wedge n)^2].
\end{equation}
By monotone convergence the right-hand side converges to $\frac{1}{4}E[\tau^2]$ as $n\to \infty$. The Burkholder-Davis-Gundy inequality ensures that $\sup_{s\in [0,\tau]}W^4_{s}$ is integrable and hence dominated convergence ensures that
\begin{equation}
\lim_{n\to \infty}E[W^4_{\tau\wedge n}]=E[W^4_\tau],
\end{equation}
which completes the proof.
\end{proof}

We are now in a position to prove that our scheme applied to $W$ does not converge faster in $L^p$, $p \ge 4$, than at the rate $1/4$.  

\begin{propo}\label{prop:lb_l4}
Assume that $I=\R$ and that the speed measure $m$ satisfies $m(dx)=2\,dx$.
In other words, the general diffusion we are considering
is a Brownian motion $W$ on $\bbR$.
Let scale factors satisfy $a_h(y)=\sqrt{h}$ for all $h\in (0,1)$, $y\in \R$. Then \eqref{eq:exact_sf} is satisfied and for all
$p\in [4,\infty)$, $y\in \R$, $T\in (0,\infty)$ and $h\in (0,1 \wedge T)$ we have
\begin{equation}\label{eq:lb_Lp}
\|W_{\tau^h_{\lfloor T/h \rfloor}}-W_T\|_{L^p(P_y)}
\ge 
\|W_{\tau^h_{\lfloor T/h \rfloor}}-W_T\|_{L^4(P_y)}
\ge 
\left(\frac{(T-h)\Var (H_{-1,1}(W))}{4}\right)^{\frac{1}{4}}h^{\frac{1}{4}}.
\end{equation}
\end{propo}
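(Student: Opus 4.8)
The claim is a lower bound on $\|W_{\tau^h_{\lfloor T/h\rfloor}} - W_T\|_{L^4}$, so the first observation is that the $L^p$-inequality for $p\ge 4$ is immediate from monotonicity of $L^p$-norms, and only the $L^4$-bound needs work. By spatial homogeneity of Brownian motion we may take $y=0$. The key structural fact is that with $a_h\equiv\sqrt h$ the embedding stopping times become $\tau^h_{k+1} = \inf\{t\ge\tau^h_k : |W_t - W_{\tau^h_k}| = \sqrt h\}$, i.e.\ $W_{\tau^h_k}$ is a scaled simple random walk and the increments $\rho^h_k = \tau^h_k - \tau^h_{k-1}$ are i.i.d.\ copies of $h\,H_{-1,1}(W)$ (by Brownian scaling), independent in the appropriate filtration sense.

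First I would condition on $\cF_{\tau^h_{\lfloor T/h\rfloor}}$ and write $N=\lfloor T/h\rfloor$, $\sigma = \tau^h_N$. On the event structure, $W_T - W_\sigma$ is, conditionally on $\cF_\sigma$, the increment of a Brownian motion over the (random, $\cF_\sigma$-measurable) time $T-\sigma$ — but one must be careful because $\sigma$ may exceed $T$. The clean way is to apply Lemma~\ref{lem:lb_W4tau} to the Brownian motion $t\mapsto W_{(\sigma+t)\wedge(\sigma\vee T)} - W_\sigma$ (started at $0$, in the filtration $(\cF_{\sigma+t})$) and the stopping time $|T-\sigma|$, which is square integrable since $\sigma = hN \cdot (\text{average of }N\text{ i.i.d.\ copies of }H_{-1,1})$ has finite second moment. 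This yields
\begin{equation*}
E\left[(W_{\tau^h_N} - W_T)^4\right] \ge \frac14\, E\left[(\tau^h_N - T)^2\right].
\end{equation*}
Strictly, Lemma~\ref{lem:lb_W4tau} is stated for a fixed Brownian motion and stopping time, so I would either invoke a conditional version of it (same proof, replacing $E$ by $E[\cdot\mid\cF_\sigma]$ throughout, which goes through verbatim since the Itô-formula step and Young's inequality are pathwise/conditional), or equivalently apply the unconditional lemma to the time-shifted Brownian motion and then take expectations.

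It remains to bound $E[(\tau^h_N - T)^2]$ from below. Write $\tau^h_N = \sum_{k=1}^N \rho^h_k$ where the $\rho^h_k$ are i.i.d.\ with $\rho^h_k \stackrel{d}{=} h\,H_{-1,1}(W)$, so $E[\tau^h_N] = hN\,E[H_{-1,1}(W)] = hN$ (since $E[H_{-1,1}(W)] = 1$, as $\frac12\int_{(-1,1)}(1-|u|)\cdot 2\,du = 1$, consistent with $a_h=\sqrt h$ solving~\eqref{eq:exact_sf}) and $\Var(\tau^h_N) = h^2 N\,\Var(H_{-1,1}(W))$. Then
\begin{equation*}
E[(\tau^h_N - T)^2] = \Var(\tau^h_N) + (E[\tau^h_N] - T)^2 \ge \Var(\tau^h_N) = h^2\lfloor T/h\rfloor\,\Var(H_{-1,1}(W)) \ge (T-h)\,h\,\Var(H_{-1,1}(W)),
\end{equation*}
using $h\lfloor T/h\rfloor \ge T-h$. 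Combining the two displays gives $E[(W_{\tau^h_N}-W_T)^4] \ge \tfrac14(T-h)\Var(H_{-1,1}(W))\,h$, and taking fourth roots yields~\eqref{eq:lb_Lp}.

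**Main obstacle.** The only genuinely delicate point is the conditioning/optional-stopping argument at the step where Lemma~\ref{lem:lb_W4tau} is applied: one must handle the case $\sigma>T$ correctly (hence the use of $|T-\sigma|$ and the Brownian motion stopped at $\sigma\vee T$), justify that the relevant stopping time is square integrable so that the lemma applies, and make sure the strong Markov property is invoked at the $\cF_\sigma$-measurable time $\sigma$ so that the post-$\sigma$ increments are an independent Brownian motion. Everything else — the i.i.d.\ structure of the $\rho^h_k$ via Brownian scaling, the mean and variance computations, and the final arithmetic with the floor function — is routine. (Note also that $\Var(H_{-1,1}(W))>0$, so the bound is non-trivial.)
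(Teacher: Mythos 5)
Your overall strategy coincides with the paper's: reduce to the $L^4$ bound, apply (a shifted/conditional version of) Lemma~\ref{lem:lb_W4tau} to get $E[(W_{\tau^h_{\lfloor T/h\rfloor}}-W_T)^4]\ge\tfrac14 E[(\tau^h_{\lfloor T/h\rfloor}-T)^2]$, and then bound $E[(\tau^h_{\lfloor T/h\rfloor}-T)^2]$ below by $\Var(\tau^h_{\lfloor T/h\rfloor})=h^2\lfloor T/h\rfloor\Var(H_{-1,1}(W))\ge h(T-h)\Var(H_{-1,1}(W))$ using the scaling $H_{-\sqrt h,\sqrt h}(W)\stackrel{d}{=}hH_{-1,1}(W)$; that second half of your argument is correct and essentially identical to the paper's (the paper uses uncorrelatedness of the increments where you use independence, which holds here anyway).

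However, your resolution of the very point you single out as delicate does not work as written. With $\sigma=\tau^h_{\lfloor T/h\rfloor}$, the process $t\mapsto W_{(\sigma+t)\wedge(\sigma\vee T)}-W_\sigma$ is a Brownian motion \emph{stopped} at time $(T-\sigma)^+$, not a Brownian motion; on the event $\{\sigma>T\}$ it is identically zero while your stopping time $|T-\sigma|$ is strictly positive, so Lemma~\ref{lem:lb_W4tau} applied to this pair would assert $0\ge\tfrac14E[(\sigma-T)^2\mid\cF_\sigma]$ there, which is false. The underlying problem is that on $\{\sigma>T\}$ the increment $W_\sigma-W_T$ is $\cF_\sigma$-measurable, i.e.\ it lives in the past of $\sigma$ and cannot be represented as a post-$\sigma$ Brownian increment, so conditioning at $\sigma$ is the wrong shift point. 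The correct fix (and in substance what the paper does) is to write $|W_\sigma-W_T|=|W_{\sigma\vee T}-W_{\sigma\wedge T}|$ and shift at $\sigma\wedge T$: the process $B_u=W_{(\sigma\wedge T)+u}-W_{\sigma\wedge T}$ is a Brownian motion started at $0$ in the filtration $(\cF_{(\sigma\wedge T)+u})_{u\ge0}$ by the strong Markov property, $(\sigma\vee T)-(\sigma\wedge T)=|\sigma-T|$ is a square-integrable stopping time for this filtration (since $\sigma\in L^2$, e.g.\ by Proposition~\ref{cor:26112018a1}), and $B_{|\sigma-T|}=\pm(W_\sigma-W_T)$, so Lemma~\ref{lem:lb_W4tau} applies directly and yields the desired inequality. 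With that repair the rest of your proof goes through and matches the paper's argument.
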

\begin{proof}
The claim that \eqref{eq:exact_sf} is satisfied follows directly from the assumption that
$a_h(y)=\sqrt{h}$ for all $h\in (0,1)$, $y\in \R$.
To prove \eqref{eq:lb_Lp} we can without loss of generality assume that $y=0$ since $m$ is space invariant. For the remainder of the proof we fix $T\in (0,\infty)$ and $h\in (0,1 \wedge T)$.
By the construction in the beginning of
Section~\ref{sec:embedding}
the stopping times satisfy
for all $k\in \N_0$ that
$\tau^h_0=0$ and $\tau^h_{k+1}=\inf\{t\ge \tau^h_k\colon |W_t-W_{\tau^h_k}|=\sqrt{h}\}$.
It follows, e.g., from Proposition~\ref{cor:26112018a1}, that the stopping time $\tau^h_{\lfloor T/h \rfloor}$ is square integrable.
Then (a conditional version of) Lemma~\ref{lem:lb_W4tau} and the fact that
that $E[\tau^h_{\lfloor T/h \rfloor}]=h\lfloor T/h \rfloor$ show that
\begin{equation}\label{eq:lb_l4_aux1}
\begin{split}
E[|W_{\tau^h_{\lfloor T/h \rfloor}}-W_T|^4]
&=
E[|W_{\tau^h_{\lfloor T/h \rfloor}\vee T}-W_{\tau^h_{\lfloor T/h \rfloor}\wedge T}|^4]
\ge 
\frac{1}{4}E[(\tau^h_{\lfloor T/h \rfloor}\vee T-\tau^h_{\lfloor T/h \rfloor}\wedge T)^2]\\
&=
\frac{1}{4}E[(\tau^h_{\lfloor T/h \rfloor}- T)^2]
\ge \frac14
E\left [\left (\tau^h_{\lfloor T/h \rfloor}- h\left \lfloor \frac{T}{h}\right \rfloor \right )^2 \right].
\end{split}
\end{equation}
Moreover, it holds that $E[\tau^h_{k+1} - \tau^h_k| \cF_{\tau^h_k}] = h$ for all $k\in \N_0$.
In particular, the increments
$\tau^h_{k+1}-\tau^h_k$, $k\in\bbN_0$,
are uncorrelated. Hence we have
\begin{equation}
\begin{split}
E\left [\left (\tau^h_{\lfloor T/h \rfloor}- h\left \lfloor \frac{T}{h}\right \rfloor \right )^2 \right]
&= \Var(\tau^h_{\lfloor T/h \rfloor})
=
\Var\left(\sum_{k=0}^{\lfloor T/h \rfloor-1} (\tau^h_{k+1}-\tau^h_k) \right)\\
&=\sum_{k=0}^{\lfloor T/h \rfloor-1}\Var\left( \tau^h_{k+1}-\tau^h_k \right).
\end{split}
\end{equation}
The assumption that for all $y\in \R$ we have $a_h(y)=\sqrt{h}$ implies that $\tau^h_{k+1}-\tau^h_k  \stackrel{d}{=} H_{-\sqrt{h},\sqrt{h}}(W)$. Therefore we obtain
\begin{equation}
E\left [\left (\tau^h_{\lfloor T/h \rfloor}- h\left \lfloor \frac{T}{h}\right \rfloor \right )^2 \right]= \left \lfloor \frac{T}{h}\right \rfloor
\Var\left( H_{-\sqrt{h},\sqrt{h}}(W) \right).
\end{equation}
The self-similarity of the Brownian motion states that the process $(\sqrt{h}W_{\frac{t}{h}})_{t\in [0,\infty)}$ is again a Brownian motion. This implies that
$H_{-\sqrt{h},\sqrt{h}}(W)\stackrel{d}{=} hH_{-1,1}(W)$. Therefore we obtain that
\begin{equation}\label{eq:lb_temp_error}
E\left [\left (\tau^h_{\lfloor T/h \rfloor}- h\left \lfloor \frac{T}{h}\right \rfloor \right )^2 \right]= h^2\left \lfloor \frac{T}{h}\right \rfloor \Var (H_{-1,1}(W))\ge
h(T-h)\Var (H_{-1,1}(W)).
\end{equation}
Combining this with \eqref{eq:lb_l4_aux1} completes the proof.
\end{proof}

\section{Convergence results on the path space}\label{sec:rate_path}
In this section, we prove Theorems \ref{thm:wasserstein_path},
\ref{cor:26022019a1} and~\ref{th:15042020a1}.
Again let $(\tau^h_k)_{k\in \N_0}$, $h\in (0,\ol h)$, be the embedding stopping times introduced
in Section~\ref{sec:embedding}. 
We define a continuous-time process
$Y^h=(Y^h_t)_{t\in [0,\infty)}$
via linear interpolation
of $(Y_{\tau^h_k})_{k\in\bbN_0}$:
\begin{equation}\label{eq:07082018a2}
 Y^h_t = Y_{\tau^h_{\lfloor t/h \rfloor}} + (t/h - \lfloor t/h \rfloor)
(Y_{\tau^h_{\lfloor t/h \rfloor+1}} - Y_{\tau^h_{\lfloor t/h \rfloor}}),\quad t\in [0,\infty).
\end{equation}
Then it follows from~\eqref{eq:same_dist} that
for all
$h\in (0,\ol h)$ and $y\in I^\circ$ we have
\begin{equation}\label{eq:24022019b2}
\Law_{P_y}
\left(Y^h_t; t\in[0,\infty)\right)
=\Law_P
\left(X^{h,y}_t; t\in[0,\infty)\right)
\end{equation}
(indeed, cf.\ \eqref{eq:07082018a2} with~\eqref{eq:13112017a1}).
We proceed similarly as in Section~\ref{sec:rate} and first establish 
 $L^p$ bounds between the processes $Y$ and~$Y^h$.

\begin{theo}\label{thm:rate_path}
Suppose that Condition~(C) is satisfied and that there exists $\lambda \in (0,\infty)$ such that Condition~(A$\lambda$) holds.
Let $p\in [1,\infty)$, $T\in (0,\infty)$ and
$\varepsilon \in (0,\min\{\frac14,\frac{\lambda}{2}\})$.
Then there exists a constant $C(p,\varepsilon, T) \in (0,\infty)$ such that for all $y\in I^\circ$ and $h\in (0,\ol h)$ we have
\begin{equation}\label{eq:13032019a1}
\left\|\sup_{t\in [0,T]}| Y^h_{t} - Y_{t}|\right\|_{L^p(P_y)}
 \le C(p,\varepsilon,T) (1+k_2|y|) h^{\min\{\frac14,\frac{\lambda}{2}\}-\varepsilon}. 
\end{equation}
\end{theo}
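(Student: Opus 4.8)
The plan is to adapt the argument behind Theorem~\ref{thm:rate_term} to the path level. By the triangle inequality it suffices to bound separately the linear-interpolation error $\bigl\|\sup_{t\in[0,T]}|Y^h_t-Y_{\tau^h_{\lfloor t/h\rfloor}}|\bigr\|_{L^p(P_y)}$ and the ``main'' term $\bigl\|\sup_{t\in[0,T]}|Y_{\tau^h_{\lfloor t/h\rfloor}}-Y_t|\bigr\|_{L^p(P_y)}$, and to show that each of them is at most $C(p,\varepsilon,T)(1+k_2|y|)\,h^{\min\{\frac14,\frac\lambda2\}-\varepsilon}$.

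For the interpolation error, one reads off from~\eqref{eq:07082018a2} that $Y^h_t$ is a convex combination of $Y_{\tau^h_{\lfloor t/h\rfloor}}$ and $Y_{\tau^h_{\lfloor t/h\rfloor+1}}$, so that $\sup_{t\in[0,T]}|Y^h_t-Y_{\tau^h_{\lfloor t/h\rfloor}}|\le\max_{k\in\{0,\dots,\lfloor T/h\rfloor\}}|Y_{\tau^h_{k+1}}-Y_{\tau^h_k}|\le\max_k a_h(Y_{\tau^h_k})$, the last step being the defining property of the embedding stopping times. By Proposition~\ref{prop:cond_d_lin_growth} the right-hand side is at most $C\sqrt h\,(1+k_2\max_k|Y_{\tau^h_k}|)$, and taking $L^p(P_y)$-norms and using~\eqref{eq:same_dist} and Theorem~\ref{prop:moment_bound_approx} gives a bound of order $(1+k_2|y|)\sqrt h$. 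Since $h<\ol h<1$ and $\min\{\frac14,\frac\lambda2\}-\varepsilon<\frac12$, this contribution is even of smaller order than claimed.

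For the main term I would exploit a pathwise modulus of continuity of $Y$. The increment bound~\eqref{eq:time_reg_mom} in Theorem~\ref{prop:moment_bound_y}, combined with Kolmogorov's continuity criterion, shows that for every $\beta\in(0,\frac12)$ the $\beta$-Hölder seminorm $\Xi$ of $Y$ on the fixed interval $[0,T+1]$ is $P_y$-a.s.\ finite and, after keeping track of the constant in Kolmogorov's criterion, satisfies $\|\Xi\|_{L^q(P_y)}\le C(q,\beta,T)(1+k_2|y|)$ for every $q\in[1,\infty)$. Set $\delta^h:=h+\max_{k\in\{0,\dots,\lfloor T/h\rfloor\}}|\tau^h_k-kh|$, so that $\sup_{t\in[0,T]}|\tau^h_{\lfloor t/h\rfloor}-t|\le\delta^h$ and, by Proposition~\ref{cor:26112018a1}, $\|\delta^h\|_{L^q(P_y)}\le C(q)\,h^{\min\{\frac12,\lambda\}}$ for every $q$. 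On the event $\{\delta^h\le1\}$ every pair $t,\tau^h_{\lfloor t/h\rfloor}$ with $t\in[0,T]$ lies in $[0,T+1]$ and the two times differ by at most $\delta^h$, hence $\sup_{t\in[0,T]}|Y_{\tau^h_{\lfloor t/h\rfloor}}-Y_t|\le\Xi\,(\delta^h)^\beta$ there; by the Cauchy--Schwarz inequality for $L^p$-norms and the two moment bounds just stated, $\bigl\|\Xi(\delta^h)^\beta\,1_{\{\delta^h\le1\}}\bigr\|_{L^p(P_y)}\le\|\Xi\|_{L^{2p}(P_y)}\,\|\delta^h\|_{L^{2p\beta}(P_y)}^\beta\le C(1+k_2|y|)\,h^{\beta\min\{\frac12,\lambda\}}$. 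On the complementary event one uses the crude bound $\sup_{t\in[0,T]}|Y_{\tau^h_{\lfloor t/h\rfloor}}-Y_t|\le2\sup_{u\in[0,\tau^h_{\lfloor T/h\rfloor}\vee T]}|Y_u-y|$, the $L^{2p}(P_y)$-norm of which is $\le C(1+k_2|y|)$ uniformly in $h$ (argue as in the proof of Proposition~\ref{prop:moment_bound_til_tau}, with $|Y_s-y|$ in place of $|Y_s|$, and use~\eqref{eq:24022019a1}, \eqref{eq:same_dist}, Theorem~\ref{prop:moment_bound_approx} and Proposition~\ref{prop:cond_d_lin_growth}); multiplying by $P_y(\delta^h>1)^{1/(2p)}\le\bigl(E_y[(\delta^h)^q]\bigr)^{1/(2p)}\le C(q)\,h^{q\min\{\frac12,\lambda\}/(2p)}$ shows that this piece decays, as $h\to0$, faster than $h^{N}$ for any prescribed $N$, once $q$ is chosen large. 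Finally, picking $\beta<\frac12$ close enough to $\frac12$ (depending on $\varepsilon$ and $\lambda$) so that $\beta\min\{\frac12,\lambda\}\ge\min\{\frac14,\frac\lambda2\}-\varepsilon$, and then $q$ large enough that $q\min\{\frac12,\lambda\}/(2p)\ge\min\{\frac14,\frac\lambda2\}-\varepsilon$, one combines the two event estimates with the interpolation error to arrive at~\eqref{eq:13032019a1}.

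I expect the main difficulty to be the passage from pointwise to uniform (over $t\in[0,T]$) control: one has to upgrade the increment moment bound~\eqref{eq:time_reg_mom} to a genuine Hölder-regularity statement whose moments are still linear in $1+k_2|y|$, and one has to cope with the fact that the random times $\tau^h_{\lfloor t/h\rfloor}$ can overshoot the horizon $T$ — which is handled by localizing on $\{\delta^h\le1\}$, the complement carrying super-polynomially small $P_y$-probability by Proposition~\ref{cor:26112018a1}. The arbitrarily small loss $\varepsilon$ relative to Theorem~\ref{thm:rate_term} is precisely the price of using a Hölder exponent $\beta<\frac12$ rather than $\beta=\frac12$.
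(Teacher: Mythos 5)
Your proposal is correct and follows essentially the same route as the paper's proof: the same triangle-inequality split into interpolation error and time-change error, the same Kolmogorov--Chentsov upgrade of \eqref{eq:time_reg_mom} combined with Proposition~\ref{cor:26112018a1} for the main term, and a localization on a good event with a crude moment bound on its complement. The differences are only cosmetic: you localize on $\{\delta^h\le 1\}$ and make the bad event negligible via arbitrarily high moments, while the paper localizes on $\{\tau^h_{\lfloor T/h\rfloor}<T+1\}$ and uses the $p$-th moment; you keep the H\"older exponent $\beta<\tfrac12$ free and optimize at the end, while the paper fixes it at $\tfrac12-\tfrac{\varepsilon}{\lambda}$.
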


\begin{proof}
To simplify some expressions we treat explicitly only the case
$\lambda\in(0,\frac12]$
(in general, one should replace all instances of $\lambda$
in the proof with $\min\{\frac12,\lambda\}$).
Without loss of generality we assume that
$\varepsilon\in(0,\frac1p\wedge\frac \lambda 2)$.
Throughout the proof we fix $h\in (0,\ol h)$ and $y\in I^\circ$.
We use the notation $C_i(p,\varepsilon,T)\in (0,\infty)$, $i\in \N$,
for constants independent of $h$ and $y$.

First, observe that the Markov inequality and
Proposition~\ref{cor:26112018a1} imply that there exists a constant $C_1(p,T) \in (0,\infty)$ such that
\begin{equation}\label{eq:rate_st_prob}
P_y[\tau^h_{\lfloor T/h \rfloor}\ge T+1]\le P_y\left [\tau^h_{\lfloor T/h \rfloor}-h\left \lfloor \frac{T}{h}\right \rfloor \ge 1\right ]\le E_y\left[\left |\tau^h_{\lfloor T/h \rfloor}-h\left \lfloor \frac{T}{h}\right \rfloor\right |^{p}\right]\le C_1(p,T)h^{\lambda p}.
\end{equation}
This, the Cauchy-Schwarz inequality,
Theorems \ref{prop:moment_bound_y}
and~\ref{prop:moment_bound_approx}
together with~\eqref{eq:24022019b2}
prove that there exists a constant $C_3(p,T)\in (0,\infty)$ such that
\begin{equation}\label{eq:rate_path_aux1}
\begin{split}
&\left\|1_{\{\tau^h_{\lfloor T/h \rfloor}\ge T+1\}}\sup_{t\in [0,T]}| Y^h_{t} - Y_{t}|\right\|_{L^p(P_y)}\le (P_y[\tau^h_{\lfloor T/h \rfloor}\ge T+1])^{\frac{1}{2p}}
\left\|\sup_{t\in [0,T]}| Y^h_{t} - Y_{t}|\right\|_{L^{2p}(P_y)}\\
&\le C_1(p,T)^{\frac{1}{2p}}h^{\frac \lambda 2}\left( \left\|\sup_{t\in [0,T]}| Y^h_{t}-y|\right\|_{L^{2p}(P_y)}
 + \left\|\sup_{t\in [0,T]}|Y_{t}-y|\right\|_{L^{2p}(P_y)}\right)\\
&\le C_3(p,T)(1+k_2|y|)h^{\frac \lambda 2}.
\end{split}
\end{equation}
Next, by the triangle inequality we have that
\begin{equation}\label{eq:rate_triangle}
\sup_{t\in [0,T]}| Y^h_{t} - Y_{t}|\le \sup_{t\in [0,T]}| Y^h_{t} - Y_{\tau^h_{\lfloor t/h \rfloor}}|+\sup_{t\in [0,T]}|Y_{\tau^h_{\lfloor t/h\rfloor}}- Y_{t}|.
\end{equation}
We consider the two terms on the right-hand side of Inequality \eqref{eq:rate_triangle} separately on the 
set $\{\tau^h_{\lfloor T/h \rfloor}< T+1\}$. We start with the latter one. 
By Theorem~\ref{prop:moment_bound_y}, there exists
$C_4(\varepsilon,T)\in (0,\infty)$ such that for all $s,t \in [0,T+1]$ we have
\begin{equation}
E_y[|Y_s-Y_t|^{\frac{1}{\varepsilon}}]\le C_4(\varepsilon,T)(1+k_2|y|)^{\frac1\eps}|s-t|^{\frac{1}{2\varepsilon}}.
\end{equation}
Together with the fact that $0\le \frac{1}{2}-\frac{\varepsilon}{\lambda}<\varepsilon(\frac{1}{2\varepsilon}-1)$,
this allows to apply the Kolmogorov-Chentsov theorem (see, e.g., Theorem~I.2.1 in \cite{RY}) to obtain that 
there exists a constant $C_5(\varepsilon,T)\in (0,\infty)$ such that
\begin{equation}\label{eq:mod_cont}
\left\|\sup_{\substack{s,t\in [0,T+1]\\
s\neq t}}\frac{|Y_s-Y_t|}{|s-t|^{\frac{1}{2}-\frac{\varepsilon}{\lambda} }} \right\|_{L^{\frac{1}{\varepsilon}}(P_y)}
\le C_5(\varepsilon,T)(1+k_2|y|).
\end{equation}
Moreover, it holds that
\begin{equation}
\sup_{t\in [0,T]}|\tau^h_{\lfloor t/h\rfloor}- t|\le 
\sup_{k\in\{0,\ldots,\lfloor T/h \rfloor \}}\sup_{t\in [kh,(k+1)h)}
|\tau^h_{k}- t|
\le 
\sup_{k\in\{0,\ldots,\lfloor T/h \rfloor\}}
\left|\tau^h_{k}- kh\right|+h.
\end{equation}
Since $\frac 12 -\frac{\varepsilon}{\lambda} < 1$, we further obtain $\sup_{t\in [0,T]}|\tau^h_{\lfloor t/h\rfloor}- t|^{\frac 12 -\frac{\varepsilon}{\lambda}} \le \sup_{k\in\{0,\ldots,\lfloor T/h \rfloor\}}\left|\tau^h_{k}- kh\right|^{\frac 12 -\frac{\varepsilon}{\lambda}} + h^{\frac 12 -\frac{\varepsilon}{\lambda}}$. 
This, together with Proposition~\ref{cor:26112018a1}, implies that
for all $\alpha\in[1,\infty)$ 
there exists a constant $C_6(\alpha,\varepsilon,T)\in (0,\infty)$ such that
\begin{align}\label{eq:time_lag_aux}
\left\|\sup_{t\in [0,T]}|\tau^h_{\lfloor t/h\rfloor}- t|^{\frac 12 -\frac{\varepsilon}{\lambda}} \right\|_{L^\alpha(P_y)}
& \le  h^{\frac{1}{2}-\frac{\varepsilon}{\lambda}}+
\left\|\sup_{k\in \{0,\ldots,\lfloor T/h \rfloor\}}\left|\tau^h_k-
kh\right|^{\frac 12 -\frac{\varepsilon}{\lambda}} \right\|_{L^\alpha(P_y)}\nonumber\\
&\leq C_6(\alpha,\varepsilon,T) h^{\frac{\lambda}{2} - \varepsilon}.
\end{align}
On the set $\{\tau^h_{\lfloor T/h \rfloor}< T+1\}$ it holds that
\begin{align*}
\sup_{t\in [0,T]}|Y_{\tau^h_{\lfloor t/h\rfloor}}- Y_{t}|
&\le\left[\sup_{\substack{s,t\in [0,T+1]\\
s\neq t}}\frac{|Y_s-Y_t|}{|s-t|^{\frac 12 -\frac{\varepsilon}{\lambda}}}\right] \left[ \sup_{t\in [0,T]}|\tau^h_{\lfloor t/h\rfloor}- t|^{\frac 12 -\frac{\varepsilon}{\lambda}}\right].
\end{align*}
This, the Hölder inequality (applied with the conjugates $\frac{1}{p\varepsilon}$ and $\frac{1}{1-p\varepsilon}$), \eqref{eq:mod_cont} and \eqref{eq:time_lag_aux} ensure that there exists a constant $C_7(p,\varepsilon,T)\in (0,\infty)$ such that
\begin{multline}\label{eq:rate_triangle2}
\left\|1_{\{\tau^h_{\lfloor T/h \rfloor} < T+1\}}\sup_{t\in [0,T]}|Y_{\tau^h_{\lfloor t/h\rfloor}}- Y_{t}|\right\|_{L^p(P_y)}\\
\le 
 \left\|\sup_{\substack{s,t\in [0,T+1]\\
s\neq t}}\frac{|Y_s-Y_t|}{|s-t|^{\frac 12 -\frac{\varepsilon}{\lambda}}}\right\|_{L^{\frac{1}{\varepsilon}}(P_y)} \left\|\sup_{t\in [0,T]}|\tau^h_{\lfloor t/h\rfloor}- t|^{\frac 12 -\frac{\varepsilon}{\lambda}}\right\|_{L^{\frac{p}{1-p\varepsilon}}(P_y)}
\le  C_7(p,\varepsilon,T)(1+k_2|y|) h^{\frac \lambda 2 -\varepsilon}.
\end{multline}
Next, we consider the first term on the right-hand side of \eqref{eq:rate_triangle}.
By definition \eqref{eq:07082018a2} of $ Y^h$ it holds that
\begin{equation}
\sup_{t\in [0,T]}| Y^h_{t} - Y_{\tau^h_{\lfloor t/h\rfloor}}|\le 
\sup_{k\in \{0,\ldots,\lfloor T/h \rfloor\}}|Y_{\tau^h_{k+1}} - Y_{\tau^h_{k}}|
=\sup_{k\in \{0,\ldots,\lfloor T/h \rfloor\}}|a_h(Y_{\tau^h_{k}})|.
\end{equation}
Then Proposition~\ref{prop:cond_d_lin_growth},
Theorem~\ref{prop:moment_bound_approx}
and~\eqref{eq:24022019b2}
show that there exist $C_8, C_9(p,T)\in (0,\infty)$ such that
\begin{equation}\label{eq:rate_triangle3}
\begin{split}
\left\|1_{\{\tau^h_{\lfloor T/h \rfloor} < T+1\}}\sup_{t\in [0,T]}| Y^h_{t} - Y_{\tau^h_{\lfloor t/h\rfloor}}|\right\|_{L^p(P_y)}
&\le C_8 h^{\frac{1}{2}}
\left\|
\sup_{k\in \{0,\ldots,\lfloor T/h \rfloor\}}
(1+k_2|Y_{\tau^h_{k}}|)
\right\|_{L^p(P_y)}\\
&\le C_9(p,T)(1+k_2|y|)h^{\frac{1}{2}}.
\end{split}
\end{equation}
Combining \eqref{eq:rate_triangle} with \eqref{eq:rate_triangle2} and \eqref{eq:rate_triangle3} shows that there exists $C_{10}(p,\varepsilon,T)\in (0,\infty)$ such that
\begin{equation}\label{eq:rate_path_aux2}
\left\|1_{\{\tau^h_{\lfloor T/h \rfloor} < T+1\}}\sup_{t\in [0,T]}| Y^h_{t}- Y_{t}|\right\|_{L^p(P_y)}
\le 
 C_{10}(p,\varepsilon,T)(1+k_2|y|) h^{\frac \lambda 2 -\varepsilon}.
\end{equation}
Finally, the triangle inequality,
\eqref{eq:rate_path_aux1} and~\eqref{eq:rate_path_aux2} imply the claim.
\end{proof}

\begin{proof}[Proof of Theorem~\ref{thm:wasserstein_path}]
Throughout the proof let $p\in [1,\infty)$ and $T\in (0,\infty)$.
First note that due to
Theorems \ref{prop:moment_bound_y}
and~\ref{prop:moment_bound_approx}
the law of $Y$ and the law of $X^{h,y}$, $h\in (0,\ol h)$, $y\in I^\circ$,
on the path space $C([0,T],I)$
are elements of $\mathcal M_p(C([0,T],I))$.
It follows from~\eqref{eq:24022019b2}
that the random element
$( Y^h, Y)$ constitutes a coupling between
the laws of $X^{h,y}$ and $Y$ on the path space $C([0,T],I)$.
Therefore, Theorem~\ref{thm:wasserstein_path} follows from Theorem~\ref{thm:rate_path}.
\end{proof}

\begin{proof}[Proof of Theorem~\ref{cor:26022019a1}]
We use the shorthand notations $ X^{h,y}$ and $Y$ for the processes $( X^{h,y}_{t})_{t\in [0,T]}$ and $(Y_t)_{t\in [0,T]}$, respectively.
Theorem~4.1 in \cite{Villani} ensures that
for any $h\in (0,\ol h)$ and $y\in I^\circ$ there exists an optimal coupling $(\xi^h, \zeta)$
on a probability space with a measure $P^*_{h,y}$
between the law of 
$ X^{h,y}$ and the law of $Y$ with respect to the $\mathcal W_2$-distance, i.e., it holds
\begin{align*}
&\Law_{P^*_{h,y}}(\xi^h_t;t\in[0,T])
=
\Law_{P}(X^{h,y}_t;t\in[0,T]),
\\
&\Law_{P^*_{h,y}}(\zeta_t;t\in[0,T])
=
\Law_{P_y}(Y_t;t\in[0,T]),
\\
&\mathcal W_2(P\circ (X^{h,y})^{-1}, P_y \circ (Y)^{-1})
=
\sqrt{E^*_{h,y}\left[\|\xi^h-\zeta\|^2_C\right]}.
\end{align*}
It holds for all $h\in (0,\ol h)$ and $y\in I^\circ$ that 
\begin{equation*}
\begin{split}
&\left|E\left[F(X^{h,y}_{t};\,t\in[0,T])\right]
-E_y\left[F(Y_t;\,t\in[0,T])\right]\right|
=\left|E^*_{h,y}\left[F(\xi^h)-F(\zeta)\right]\right|.
\end{split}
\end{equation*}
This together with \eqref{eq:LipConstPol}, the Cauchy-Schwarz inequality and the triangle inequality
in $L^2$
implies for all $h\in (0,\ol h)$ and $y\in I^\circ$ that
\begin{equation}\label{eq:270219a1}
\begin{split}
&\left|E\left[F(X^{h,y}_{t};\,t\in[0,T])\right]
-E_y\left[F(Y_t;\,t\in[0,T])\right]\right|
\le LE^*_{h,y}\left[\left\{1+(\|\xi^h\|_C\vee\|\zeta\|_C)^\alpha\right\}\|\xi^h-\zeta\|_C\right]\\
&\le 
L \sqrt{E^*_{h,y}\left[\left(1+(\|\xi^h\|_C\vee\|\zeta\|_C)^\alpha\right)^2\right]}
\sqrt{E^*_{h,y}\left[\|\xi^h-\zeta\|^2_C\right]}\\
&\le 
L \left(1+ \sqrt{E^*_{h,y}\left[(\|\xi^h\|_C\vee\|\zeta\|_C)^{2\alpha}\right]}\right)
\sqrt{E^*_{h,y}\left[\|\xi^h-\zeta\|^2_C\right]}.
\end{split}
\end{equation}
It follows from Theorem~\ref{thm:wasserstein_path} that there exists $C_1\in [0,\infty)$ such that for all $h\in (0,\ol h)$ and $y\in I^\circ$
\begin{equation}\label{eq:270219a2}
\sqrt{E^*_{h,y}\left[\|\xi^h-\zeta\|^2_C\right]}=
\mathcal W_2(P\circ (X^{h,y})^{-1}, P_y \circ (Y)^{-1})
\le 
C_1 (1+k_2|y|) h^{\min\{\frac{1}{4},\frac{\lambda}{2}\}-\varepsilon}
.
\end{equation}
For the next step first assume that $\alpha>0$ and let $\beta=\max\{2\alpha,1\}$.
By Jensen's inequality and
Theorems \ref{prop:moment_bound_y}
and~\ref{prop:moment_bound_approx}
there exists $C_2\in [0,\infty)$ such that
we have for all $h\in (0,\ol h)$ and $y\in I^\circ$ 
\begin{equation}\label{eq:270219a3}
\begin{split}
\sqrt{E^*_{h,y}\left[(\|\xi^h\|_C\vee\|\zeta\|_C)^{2\alpha}\right]}
&=
\left(E^*_{h,y}\left[(\|\xi^h\|_C\vee\|\zeta\|_C)^{2\alpha}\right]\right)^\frac{\alpha}{2\alpha}
\le
\left(E^*_{h,y}\left[(\|\xi^h\|_C\vee\|\zeta\|_C)^{\beta}\right]\right)^\frac{\alpha}{\beta}\\
&=
\left\| \|\xi^h\|_C\vee\|\zeta\|_C\right\|^\alpha_{L^{\beta}(P^*_{h,y})}
\le 
\left( 
\left\| \|\xi^h\|_C\right\|_{L^{\beta}(P^*_{h,y})}+\left\|\|\zeta\|_C\right\|_{L^{\beta}(P^*_{h,y})}
\right)^\alpha\\
&=
\left( 
\left\| \|X^h\|_C\right\|_{L^{\beta}(P^*_{h,y})}+\left\|\|Y\|_C\right\|_{L^{\beta}(P^*_{h,y})}
\right)^\alpha
\le 
C_2(1+|y|^\alpha).
\end{split}
\end{equation}
Note that \eqref{eq:270219a3} clearly also holds in the case $\alpha=0$. Combining \eqref{eq:270219a1}, \eqref{eq:270219a2} and \eqref{eq:270219a3} establishes \eqref{eq:rate_path_locLip} and completes the proof.
\end{proof}

\begin{proof}[Proof of Theorem~\ref{th:15042020a1}]
Throughout the proof let $T\in(0,\infty)$,
$y\in I^\circ$ and $p\in[1,\infty)$.
As in the proof of Theorem~\ref{thm:wasserstein_path},
Statement~\eqref{eq:24022019d1}
follows from Theorems \ref{prop:moment_bound_y}
and~\ref{prop:moment_bound_approx}.
Let $F\colon C([0,T],I)\to\bbR$ be a continuous path functional satisfying
$$
|F(x)|\le L(1+\|x\|_C^p),\quad x\in C([0,T],I),
$$
with some constant $L\in[0,\infty)$.
By Theorem~\ref{prop:moment_bound_approx},
we have
$\sup_{h\in(0,\ol h)}E\left[\|X^{h,y}\|_C^{2p}\right]<\infty$,
hence
$$
\sup_{h\in(0,\ol h)}E\left[F(X^{h,y})^2\right]<\infty,
$$
which implies the uniform integrability of the family
$\{F(X^{h,y})\}_{h\in(0,\ol h)}$.
By the main result in \cite{aku2018cointossing},
we have the weak convergence $X^{h,y}\xrightarrow[]{w}Y$ on the path space.
The continuity of the functional $F$ yields
$F(X^{h,y})\xrightarrow[]{w}F(Y)$,
which, together with the uniform integrability of
$\{F(X^{h,y})\}_{h\in(0,\ol h)}$,
implies~\eqref{eq:15042020a3}.
Finally, the equivalence between \eqref{eq:15042020a2} and~\eqref{eq:15042020a3}
follows from Theorem~6.9 in \cite{Villani}
(also consult Definition~6.8 there).
\end{proof}

\section{On Condition~(C)}\label{sec:CC}
In this section we show that Condition~(C) is a nearly minimal  
assumption for our results.
To this end fix any starting point $y\in I^\circ$ and
note that $Y$ is a $P_y$-local martingale
due to our assumption that the boundaries
are inaccessible or absorbing.
Let us consider the conditions
\begin{equation}\label{eq:13072018a1}
r=\infty\quad\text{and}\quad\int^\infty x\,m(dx)<\infty
\end{equation}
and
\begin{equation}\label{eq:13072018a2}
l=-\infty\quad\text{and}\quad\int_{-\infty} |x|\,m(dx)<\infty,
\end{equation}
where the notation
$\int^\infty x\,m(dx)<\infty$
is understood as
$\int_z^\infty x\,m(dx)<\infty$
for some (equivalently, for any) $z\in I^\circ$,
and the notation
$\int_{-\infty} |x|\,m(dx)<\infty$
is understood in a similar way.
The article \cite{Kotani2006} establishes that $Y$ is a
strict $P_y$-local martingale\footnote{A
\emph{strict} local martingale is a local martingale
that fails to be a martingale.}
if and only if at least one of the conditions 
\eqref{eq:13072018a1} and \eqref{eq:13072018a2}
is satisfied.
\cite{GUZ} complements this result by showing that

\smallskip
(a) $Y$ is \emph{not} a $P_y$-supermartingale
if and only if \eqref{eq:13072018a2} holds;

\smallskip
(b) $Y$ is \emph{not} a $P_y$-submartingale
if and only if \eqref{eq:13072018a1} holds.

\smallskip\noindent
Note that under Condition~(C) neither~\eqref{eq:13072018a1} nor~\eqref{eq:13072018a2} is satisfied.

\begin{ex}
Suppose that $r=\infty$ and that the speed measure is given by $m(dx)=\frac{2}{\eta^2(x)}\,dx$ on $I^\circ$ with $\eta$ satisfying $\liminf_{x\to \infty} \frac{\eta^2(x)}{x^2\log^\alpha(x)}>0$ for some $\alpha\in (1,\infty)$. The fact that
$$\lim_{x\to \infty}\int_e^x\frac{z}{z^2(\log(z))^{\alpha}}dz=\frac{1}{\alpha-1}\lim_{x\to \infty}\left(1-\frac{1}{(\log(x))^{\alpha-1}} \right)=\frac{1}{\alpha-1}$$
implies that \eqref{eq:13072018a1} is satisfied. In particular, it follows from \cite{Kotani2006} that the associated diffusion $Y$ is a strict $P_y$-local martingale. 
Hence, there exists $T\in (0,\infty)$ such that
$E_y[\sup_{t\in [0,T]}|Y_t|]=\infty$.
In particular, this implies that the claim of Theorem~\ref{prop:moment_bound_y}
(and hence that of Theorem~\ref{thm:wasserstein_path})
does not hold true in this example, even for $p=1$,
as the law of $Y$ does not belong to
$\cM_1(C([0,T],I))$.
Moreover, also the result on path-dependent rate in $L^p$,
Theorem~\ref{thm:rate_path},
which a priori does not require the law of $Y$ to belong to
$\cM_p(C([0,T],I))$
(only the difference $Y^h-Y$ matters),
does not hold true, even for $p=1$,
as for any fixed $h\in(0,\ol h)$ the law of $Y^h$
belongs to $\cM_1(C([0,T],I))$
(see~\eqref{eq:24022019b2} and \eqref{eq:def_X}--\eqref{eq:13112017a1}).
The error criterion
in the left-hand side of~\eqref{eq:13032019a1}
is just too strong for this example
regardless of the specific approximation scheme used.

Notice that Condition~(C) is only marginally violated
in this example.
\end{ex}

\begin{ex}
Consider any general diffusion $Y$ for which
\begin{equation}\label{eq:13032019a2}
\text{one of
\eqref{eq:13072018a1}--\eqref{eq:13072018a2}
is satisfied, while the other is not.}
\end{equation}
As in the previous example,
it is possible to only marginally violate Condition~(C)
and to achieve~\eqref{eq:13032019a2}.
It follows from Statements (a) and~(b) above
that $Y$ is either a strict $P_y$-supermartingale
or a strict $P_y$-submartingale.\footnote{A
\emph{strict} supermartingale is a supermartingale
that is not a martingale. A \emph{strict} submartingale
is understood in a similar way.}
In particular, there exists $T\in (0,\infty)$ such that
$E_y[Y_T]\ne y$. The approximating Markov chain
$(X^{y,h}_{kh})_{k\in\bbN_0}$
is always a discrete-time martingale
(see~\eqref{eq:def_X} and also notice that
each random variable $X^{y,h}_{kh}$
takes finitely many values and, in particular, belongs to $L^1$).
Therefore, we always have
$E[X^{y,h}_T]=y$. Thus, in this example we have
$$
E[X^{y,h}_T]\not\to E_y[Y_T]\quad\text{as }h\to 0,
$$
and hence, the claim of
Theorem~\ref{th:15042020a1}
does not hold true even for the
(non-path-dependent)
functional $F(x)=x(T)$.
Similarly, the claim of Theorem~\ref{thm:wasserstein_term}
(let alone the ones of Theorems
\ref{thm:wasserstein_path} and~\ref{cor:26022019a1})
are violated as well
whenever \eqref{eq:13032019a2} is satisfied.
\end{ex}

\paragraph{Acknowledgement}
We thank two anonymous referees for the careful reading of the manuscript and for their comments.
Thomas Kruse and Mikhail Urusov acknowledge the support from the
\emph{German Research Foundation}
through the project 415705084.

\bibliographystyle{abbrv}
\bibliography{literature}

\begin{thebibliography}{10}

\bibitem{AJKH}
A.~Alfonsi, B.~Jourdain, and A.~Kohatsu-Higa.
\newblock Pathwise optimal transport bounds between a one-dimensional diffusion
  and its {E}uler scheme.
\newblock {\em Ann. Appl. Probab.}, 24(3):1049--1080, 2014.

\bibitem{AKKK17}
S.~Ankirchner, N.~Kazi-Tani, M.~Klein, and T.~Kruse.
\newblock Stopping with expectation constraints: 3 points suffice.
\newblock {\em Electron. J. Probab.}, 24:Paper No.~66, 16~pp., 2019.

\bibitem{AKKU2018}
S.~Ankirchner, M.~Klein, T.~Kruse, and M.~Urusov.
\newblock On a certain local martingale in a general diffusion setting.
\newblock {\em Preprint, hal-01700656}, 2018.

\bibitem{aklu2020emcel}
S.~Ankirchner, T.~Kruse, W.~L\"ohr, and M.~Urusov.
\newblock Properties of the {EMCEL} scheme for approximating irregular
  diffusions.
\newblock {\em Preprint, arXiv:2004.10316}, 2020.

\bibitem{aku-jmaa}
S.~Ankirchner, T.~Kruse, and M.~Urusov.
\newblock Numerical approximation of irregular {SDE}s via {S}korokhod
  embeddings.
\newblock {\em J. Math. Anal. Appl.}, 440(2):692--715, 2016.

\bibitem{aku2018cointossing}
S.~Ankirchner, T.~Kruse, and M.~Urusov.
\newblock A functional limit theorem for coin tossing {M}arkov chains.
\newblock {\em \textup{Accepted in }Ann. Inst. Henri Poincar\'{e} Probab.
  Stat.}, 2020.

\bibitem{Bass2014}
R.~F. Bass.
\newblock A stochastic differential equation with a sticky point.
\newblock {\em Electron. J. Probab.}, 19:no. 32, 22, 2014.

\bibitem{BSWOHRKK}
C.~{Brugger}, C.~{de Schryver}, N.~{Wehn}, S.~{Omland}, M.~{Hefter},
  K.~{Ritter}, A.~{Kostiuk}, and R.~{Korn}.
\newblock Mixed precision multilevel {M}onte {C}arlo on hybrid computing
  systems.
\newblock In {\em 2014 IEEE Conference on Computational Intelligence for
  Financial Engineering Economics (CIFEr)}, pages 215--222, March 2014.

\bibitem{CanCaglar2019}
B.~Can and M.~Caglar.
\newblock Conditional law and occupation times of two-sided sticky {B}rownian
  motion.
\newblock {\em Preprint, arXiv:1910.10213}, 2019.

\bibitem{EberleZimmer2019}
A.~Eberle and R.~Zimmer.
\newblock Sticky couplings of multidimensional diffusions with different
  drifts.
\newblock {\em Ann. Inst. Henri Poincar\'{e} Probab. Stat.}, 55(4):2370--2394,
  2019.

\bibitem{ep2014}
H.-J. Engelbert and G.~Peskir.
\newblock Stochastic differential equations for sticky {B}rownian motion.
\newblock {\em Stochastics}, 86(6):993--1021, 2014.

\bibitem{ES1985}
H.~J. Engelbert and W.~Schmidt.
\newblock On solutions of one-dimensional stochastic differential equations
  without drift.
\newblock {\em Z. Wahrsch. Verw. Gebiete}, 68(3):287--314, 1985.

\bibitem{EL}
P.~Etor{\'e} and A.~Lejay.
\newblock A {D}onsker theorem to simulate one-dimensional processes with
  measurable coefficients.
\newblock {\em ESAIM: Probability and Statistics}, 11:301--326, 2007.

\bibitem{FGV2016}
T.~Fattler, M.~Grothaus, and R.~Vo\ss{}hall.
\newblock Construction and analysis of a sticky reflected distorted {B}rownian
  motion.
\newblock {\em Ann. Inst. Henri Poincar\'{e} Probab. Stat.}, 52(2):735--762,
  2016.

\bibitem{Frikha:18}
N.~Frikha.
\newblock On the weak approximation of a skew diffusion by an {E}uler-type
  scheme.
\newblock {\em Bernoulli}, 24(3):1653--1691, 2018.

\bibitem{GHMR:19}
M.~B. Giles, M.~Hefter, L.~Mayer, and K.~Ritter.
\newblock Random bit quadrature and approximation of distributions on {H}ilbert
  spaces.
\newblock {\em Found. Comput. Math.}, 19(1):205--238, 2019.

\bibitem{GV2017}
M.~Grothaus and R.~Vo\ss{}hall.
\newblock Stochastic differential equations with sticky reflection and boundary
  diffusion.
\newblock {\em Electron. J. Probab.}, 22:Paper No. 7, 37, 2017.

\bibitem{GV2018}
M.~Grothaus and R.~Vo\ss{}hall.
\newblock Strong {F}eller property of sticky reflected distorted {B}rownian
  motion.
\newblock {\em J. Theoret. Probab.}, 31(2):827--852, 2018.

\bibitem{GUZ}
A.~Gushchin, M.~Urusov, and M.~Zervos.
\newblock On the submartingale/supermartingale property of diffusions in
  natural scale.
\newblock {\em Proc. Steklov Inst. Math.}, 287(1):122--132, 2014.

\bibitem{HajriCaglarArnaudon:17}
H.~Hajri, M.~Caglar, and M.~Arnaudon.
\newblock Application of stochastic flows to the sticky {B}rownian motion
  equation.
\newblock {\em Electron. Commun. Probab.}, 22:Paper No. 3, 10, 2017.

\bibitem{HJK}
M.~Hutzenthaler, A.~Jentzen, and P.~Kloeden.
\newblock Strong and weak divergence in finite time of {E}uler's method for
  stochastic differential equations with non-globally {L}ipschitz continuous
  coefficients.
\newblock {\em Proc. R. Soc. Lond. Ser. A Math. Phys. Eng. Sci.},
  467(2130):1563--1576, 2011.

\bibitem{Kallenberg2002}
O.~Kallenberg.
\newblock {\em Foundations of modern probability}.
\newblock Probability and its Applications (New York). Springer-Verlag, New
  York, second edition, 2002.

\bibitem{KSS2011}
I.~Karatzas, A.~N. Shiryaev, and M.~Shkolnikov.
\newblock On the one-sided {T}anaka equation with drift.
\newblock {\em Electron. Commun. Probab.}, 16:664--677, 2011.

\bibitem{KS}
I.~Karatzas and S.~E. Shreve.
\newblock {\em Brownian {M}otion and {S}tochastic {C}alculus}, volume 113 of
  {\em Graduate Texts in Mathematics}.
\newblock Springer-Verlag, New York, second edition, 1991.

\bibitem{kloeden1992numerical}
P.~Kloeden and E.~Platen.
\newblock {\em Numerical {S}olution of {S}tochastic {D}ifferential
  {E}quations}, volume~23.
\newblock Springer, 1992.

\bibitem{KHLY:JCAM2017}
A.~Kohatsu-Higa, A.~Lejay, and K.~Yasuda.
\newblock Weak rate of convergence of the {E}uler-{M}aruyama scheme for
  stochastic differential equations with non-regular drift.
\newblock {\em J. Comput. Appl. Math.}, 326:138--158, 2017.

\bibitem{KonMen:17}
V.~Konakov and S.~Menozzi.
\newblock Weak error for the {E}uler scheme approximation of diffusions with
  non-smooth coefficients.
\newblock {\em Electronic Journal of Probability}, 22, 2017.

\bibitem{Konarovskyi2017}
V.~Konarovskyi.
\newblock Coalescing-fragmentating {W}asserstein dynamics: particle approach.
\newblock {\em Preprint, arXiv:1711.03011v3}, 2017.

\bibitem{KvR2017}
V.~Konarovskyi and M.~von Renesse.
\newblock Reversible coalescing-fragmentating {W}asserstein dynamics on the
  real line.
\newblock {\em Preprint, arXiv:1709.02839v2}, 2017.

\bibitem{Kotani2006}
S.~Kotani.
\newblock On a condition that one-dimensional diffusion processes are
  martingales.
\newblock In {\em Seminar on probability, XXXIX}, volume 1874 of {\em Lecture
  Notes in Math.}, pages 149--156. Springer, Berlin, 2006.

\bibitem{LLP2019}
A.~Lejay, L.~Len\^{o}tre, and G.~Pichot.
\newblock An exponential timestepping algorithm for diffusion with
  discontinuous coefficients.
\newblock {\em J. Comput. Phys.}, 396:888--904, 2019.

\bibitem{milstein2015uniform}
G.~N. Milstein and J.~Schoenmakers.
\newblock Uniform approximation of the {C}ox-{I}ngersoll-{R}oss process via
  exact simulation at random times.
\newblock {\em Adv. in Appl. Probab.}, 48(4):1095--1116, 2016.

\bibitem{NgoTaguchi:SPL2017}
H.-L. Ngo and D.~Taguchi.
\newblock Strong convergence for the {E}uler-{M}aruyama approximation of
  stochastic differential equations with discontinuous coefficients.
\newblock {\em Statist. Probab. Lett.}, 125:55--63, 2017.

\bibitem{Pages:18}
G.~Pag\`es.
\newblock {\em Numerical probability}.
\newblock Universitext. Springer, Cham, 2018.
\newblock An introduction with applications to finance.

\bibitem{piskorski2016optimal}
T.~Piskorski and M.~M. Westerfield.
\newblock Optimal dynamic contracts with moral hazard and costly monitoring.
\newblock {\em Journal of Economic Theory}, 166:242--281, 2016.

\bibitem{RY}
D.~Revuz and M.~Yor.
\newblock {\em Continuous martingales and {B}rownian motion}, volume 293 of
  {\em Grundlehren der Mathematischen Wissenschaften [Fundamental Principles of
  Mathematical Sciences]}.
\newblock Springer-Verlag, Berlin, third edition, 1999.

\bibitem{RogersWilliams}
L.~C.~G. Rogers and D.~Williams.
\newblock {\em Diffusions, {M}arkov processes, and martingales. {V}ol. 2}.
\newblock Cambridge Mathematical Library. Cambridge University Press,
  Cambridge, 2000.
\newblock It\^o calculus, Reprint of the second (1994) edition.

\bibitem{Villani}
C.~Villani.
\newblock {\em Optimal transport. Old and new}, volume 338 of {\em Grundlehren
  der Mathematischen Wissenschaften [Fundamental Principles of Mathematical
  Sciences]}.
\newblock Springer-Verlag, Berlin, 2009.

\bibitem{zhu2012optimal}
J.~Y. Zhu.
\newblock Optimal contracts with shirking.
\newblock {\em Review of Economic Studies}, 80(2):812--839, 2013.

\end{thebibliography}

\end{document}